\documentclass{patmorin}
\usepackage{pat}
\usepackage[T1]{fontenc}
\usepackage[utf8]{inputenc}
\usepackage{mathtools}
\usepackage{thmtools}
\usepackage{thm-restate}
\usepackage[longnamesfirst,numbers,sort&compress]{natbib}
\usepackage[inline]{enumitem}

\newenvironment{clmproof}{\begin{proof}[Proof of Claim:]}{\end{proof}}

\newcommand{\jedrzej}[1]{\textcolor{ProcessBlue}{[Jędrzej: #1]}}

\newcommand{\quentin}[1]{\textcolor{Purple}{[Quentin: #1]}}
\newcommand{\gwen}[1]{\textcolor{RedOrange}{[Gwen: #1]}}

\newcommand{\suchthat}{\colon}

\DeclareMathOperator{\pw}{pw}
\DeclareMathOperator{\lpw}{lpw}

\DeclareMathOperator{\td}{td}
\DeclareMathOperator{\ltd}{ltd}

\DeclareMathOperator{\diam}{diam}
\DeclareMathOperator{\radius}{rad}

\DeclareMathOperator{\dist}{dist}

\newcommand{\NN}{\mathbb{N}}

\let\le\leqslant
\let\ge\geqslant
\let\leq\leqslant
\let\geq\geqslant

\let\subset\subseteq

\usepackage{comment}

\title{\MakeUppercase{Excluding an apex-forest or a fan as quickly as possible}}

\author{Quentin Claus\thanks{D\'epartement de Mathématiques, Universit\'e libre de Bruxelles, Belgium ({\tt quentin.claus@ulb.be}). Q.\ Claus is supported by the Belgian National Fund for Scientific Research (FNRS).} \qquad Jędrzej Hodor\thanks{Theoretical Computer Science Department, 
Faculty of Mathematics and Computer Science and Doctoral School of Exact and Natural Sciences, Jagiellonian University, Krak\'ow, Poland (\texttt{jedrzej.hodor@gmail.com}). J.\ Hodor is supported by a Polish Ministry of Education and Science grant (Perły Nauki; PN/01/0265/2022).} \qquad Gwenaël Joret\thanks{D\'epartement d'Informatique, Universit\'e libre de Bruxelles, Belgium ({\tt gwenael.joret@ulb.be}). G.\ Joret is supported by the Belgian National Fund for Scientific Research (FNRS).} \qquad Pat Morin\thanks{School of Computer Science, Carleton University, Ottawa, Canada (\texttt{morin@scs.carleton.ca}). Research supported by NSERC.}}

\date{}

\begin{document}

\maketitle

\begin{abstract}
	We show that every graph $G$ excluding an apex-forest $H$ as a minor has layered pathwidth at most $|V(H)|-2$, and that every graph $G$ excluding an apex-linear forest (such as a fan) $H$ as a minor has layered treedepth at most $|V(H)|-2$.
	We further show that both bounds are optimal.
	These results improve on recent results of Hodor, La, Micek, and Rambaud (2025):
	The first result improves the previous best-known bound by a multiplicative factor of $2$, while the second strengthens a previous quadratic bound. 
	In addition, we reduce from quadratic to linear the bound on the $S$-focused treedepth $\td(G,S)$ for graphs $G$ with a prescribed set of vertices $S$ excluding models of paths in which every branch set intersects~$S$. 	
\end{abstract}

\section{Introduction}

A central theme in the celebrated proof of Robertson and Seymour's Graph Minor Theorem is the interplay between graph decompositions and excluded minors.  For example, excluding a planar graph (as a minor) is equivalent to having bounded treewidth~\cite{GM5,Chuzhoy2021};
excluding a forest is equivalent to having bounded pathwidth~\cite{GM1,BIENSTOCK1991274,Diestel1995}; and
excluding a path is equivalent to having bounded treedepth.

Recently, \emph{layered} graph decompositions have been used to resolve a number of longstanding open problems on planar graphs, other minor-closed graph classes, and some non-minor closed graph classes~\cite{BBEGLPS24, DEMWW22, DF18, DMMY25, DMY21, layered-treewidth, layered-pathwidth, wood.liu:clustered, quickly-excluding-apex-forest, SW20}.  In these application, the parameter associated with the layered decomposition usually has a significant impact on the bound in the application.  For example, \citet{DEMWW22} show that bounded-degree graphs of layered treewidth at most $t$ have clustered chromatic number $O(t^{3})$. 

In several cases, the existence of such layered graph decompositions is implied by an excluded minor, where the excluded graph is an apex-type graph.\footnote{For a graph family $\mathcal{F}$, a graph $G$ is $\mathcal{F}$-apex if $G$ contains a vertex $v$ such that $G-v\in\mathcal{F}$. For example, a graph is an \defin{apex-planar graph} if it can be made planar by the removal of at most one vertex; a graph is an \defin{apex-forest} if it can be made acyclic by the removal of at most one vertex; and a graph is an \defin{apex-linear forest} if it can be made a subgraph of a path by the removal of at most one vertex. For historical reasons an apex-planar graph is simply referred to as an \defin{apex graph}.}  For example, excluding an apex graph is equivalent to having bounded layered treewidth~\cite{layered-treewidth};
excluding an apex-forest is equivalent to having bounded layered pathwidth~\cite{layered-pathwidth,quickly-excluding-apex-forest}; and
excluding a fan is equivalent to having bounded layered treedepth~\cite{quickly-excluding-apex-forest}.
We revisit these last two results, obtaining tighter bounds in the equivalences.
Definitions of layered pathwidth $\lpw(\cdot)$ and layered treedepth $\ltd(\cdot)$ are given in \cref{sec:definitions}.

\citet*{layered-pathwidth} prove that there exists a function $f$ such that for every apex-forest $H$,
every $H$-minor free graph $G$ satisfies $\lpw(G) \leq f(|V(H)|)$.
\citet*{quickly-excluding-apex-forest} give a much shorter proof and an explicit formula for~$f$, namely $f(k) = 2k-3$ for $k\geq 2$.
We find the best possible function: $f(k)=k-2$.
\begin{thm} \label{thm:lpw}
	For every apex-forest $H$ with at least three vertices,
	and for every $H$-minor-free graph $G$,
	$\lpw(G)\leq |V(H)|-2$.
\end{thm}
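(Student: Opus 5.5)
We would first reduce the statement to a single canonical apex-forest. Every apex-forest $H$ on $k\ge 3$ vertices is a minor of the graph $T_k^+$ obtained from some tree $T$ on $k-1$ vertices by adding a dominating vertex; we may even fix $T$ to be the one containing $H-v$ after connecting its components. Therefore it suffices to prove the following. If $T$ is a tree on $k-1$ vertices and $G$ excludes $T^+$ (the tree $T$ plus a universal apex) as a minor, then $\lpw(G)\le k-2 = |V(T)|-1$.

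We may assume $G$ is connected. Fix a root $r$ and let the layering be BFS layers $L_0,L_1,\dots$ from $r$. The key observation is the following. Suppose some layer-respecting structure forces a $T$-minor whose branch sets all sit in layers $\ge i$ and each touch layer $L_i$. Then contracting $L_0\cup\dots\cup L_{i-1}$, which is connected, gives an apex adjacent to every branch set, and hence a $T^+$ minor. So for each $i$, the graph $G_{\ge i}$ with the vertex set $S_i=L_i$ marked excludes a ``rooted'' $T$-minor, namely one where every branch set meets $S_i$.

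The main step is then a focused pathwidth statement: if $(G,S)$ has no $T$-model with every branch set intersecting $S$, then $G$ has a path decomposition in which each bag contains at most $|V(T)|-1$ vertices of $S$. This is the analogue of the classical fact that excluding a tree $T$ gives pathwidth at most $|V(T)|-2$ (Bienstock--Robertson--Seymour--Thomas, sharpened by Diestel's short proof). We would adapt Diestel's proof, or the Bienstock et al.\ ``$T$-growth'' argument, by counting only $S$-vertices in separators. The argument grows an $S$-rooted model of $T$ greedily along a linear ordering of $S$. Whenever an attempt fails, the obstruction yields a separation whose $S$-part of the separator has size at most $|V(T)|-1$, and we recurse.

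To assemble the result, we combine the focused decompositions layer by layer into a single path decomposition of $G$. In this decomposition each bag meets each BFS layer in at most $k-2$ vertices. One natural way is to apply the focused statement to $G_{\ge i}$ with $S=L_i$ and the component structure, then interleave, as in Hodor--La--Micek--Rambaud. A better route, needed to avoid their factor $2$, is to apply it once to the graph obtained by contracting each layer-$<i$ part, so that each bag's intersection with $L_i$ is controlled directly rather than counting both $L_i$ and $L_{i+1}$.

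The main obstacle is this final bookkeeping. A naive layer-by-layer gluing charges a bag for two consecutive layers, which is exactly the source of the $2k-3$ bound. We expect to resolve it by working with a single decomposition of $G$ built by a Diestel-style extension argument. In that argument each layer $L_i$ plays the role of the ``$S$'' simultaneously, and the apex contraction argument certifies the bound $k-2$ per layer.
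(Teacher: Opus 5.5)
Your reduction to $T^+$ and the contraction observation are fine, but both load-bearing steps are missing. Your main step is that having no $S$-rooted $T$-model gives a decomposition with at most $|V(T)|-1$ vertices of $S$ per bag. This is a general rooted-forest statement with the tight constant, and you give no argument for it beyond ``adapt Diestel''. That adaptation breaks at the extension step. There one enlarges the model by taking a neighbour $u'\in V(B)\setminus V(A)$ of the separator as the branch set of a new leaf. In the rooted setting this branch set must contain a vertex of $S$, and nothing guarantees it.

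The paper neither proves nor uses your rooted statement. The general rooted-forest argument of Hodor, La, Micek and Rambaud loses a factor of $2$ exactly here (their $(w,S)$-good separations allow width $2w-2$). That, not the gluing, is where $2|V(H)|-3$ comes from. By passing from $H$-minor-freeness to rooted $T$-models you discard the freedom the paper exploits.

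The paper instead keeps the contracted vertex $u$ and shows directly that $\pw(G-u,N_G(u))\ge |V(H)|-2$ forces an $H$-minor (Lemma~\ref{lem:finding_F+} and Corollary~\ref{cor:finding_F+}). Its $F$-models are rooted at separators of maximal good separations, not at $S$. In the bad case $S\setminus V(A_0)=\{u'\}$, the vertex $u$ itself becomes the branch set of the leaf $x$. Paths from the last remaining $S$-vertex to the separator then form the apex branch set. That model is not of the form ``$N_G(u)$-rooted $F$-model plus apex'', so your reduction cannot detect it.

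The assembly is only an expectation, and your intermediate parameter is the wrong one for it. You also misdiagnose it as the source of the factor $2$. The paper glues with Lemma~\ref{lem:pwgs-to-lpw}, which loses nothing. However, it needs $\pw(G-U,N_G(U))<w$ for every connected $U$, in the focused $(J,\mathcal W)$ sense where every vertex of $J$ counts toward the width, not just those in $S$. A decomposition of $G_{\ge i}$ that bounds only $|W\cap L_i|$ leaves the deeper layers uncontrolled. Separate such decompositions for different $i$ give no way to build a single path decomposition of $G$ meeting every layer in at most $k-2$ vertices.

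So two things are missing. You need a focused bound $\pw(G-u,N_G(u))\le |V(H)|-3$ that uses the apex as above, and you need the gluing lemma. With both in hand, the layer-by-layer bookkeeping you worry about is not needed.
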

\citet{quickly-excluding-apex-forest} prove that, for every apex-linear forest $H$, every $H$-minor-free graph $G$ satisfies $\ltd(G)\leq \binom{|V(H)|-1}{2}$.  Again, we find the best possible bound.
\begin{thm} \label{thm:ltd}
	For every apex-linear forest $H$ with at least three vertices, and for every $H$-minor-free graph~$G$,
	$\ltd(G)\leq |V(H)|-2$.
\end{thm}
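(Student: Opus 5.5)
We prove Theorem~\ref{thm:ltd} by reducing it to a focused-treedepth statement applied to a BFS layering. Any apex-linear forest $H$ on $k\ge 3$ vertices is a minor of the fan $F_{k}$ (an apex joined to a path on $k-1$ vertices). So it suffices to show that every $F_k$-minor-free graph $G$ has $\ltd(G)\le k-2$.

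We may assume $G$ is connected. Fix a root $r$, and let $L_0,L_1,\dots$ be the BFS layers from $r$. This is a layering, so each edge of $G$ joins vertices in the same or in consecutive layers. The target is a treedepth decomposition, i.e.\ a rooted forest $F$ on $V(G)$ in which every edge of $G$ joins an ancestor and a descendant, such that every root-to-leaf path of $F$ contains at most $k-2$ vertices of each layer $L_i$.

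The key input is the focused-treedepth result announced in the abstract. For a connected graph $G'$ and $S\subseteq V(G')$, if $G'$ has no model of the path $P_{m}$ in which every branch set meets $S$, then $\td(G',S)\le m-1$. Here $\td(G',S)$ is the minimum, over elimination forests of $G'$, of the maximum number of $S$-vertices on a root-to-leaf path. I would prove this by induction on $m$, removing a suitable vertex of $S$ at the root and recursing on components; getting the linear rather than quadratic bound is the main obstacle.

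Apply this with $S=L_i$ inside $G_i=G[L_i\cup L_{i+1}\cup\cdots]$. Contract $L_0\cup\dots\cup L_{i-1}$, which is connected via the BFS tree, into a single vertex $a$. Every vertex of $L_i$ is adjacent to $a$. Hence a model of $P_{k-1}$ in $G_i$ whose branch sets all meet $L_i$, together with $a$, gives an $F_k$ minor. Therefore, within each component of $G_i$, we get $\td(\cdot, L_i)\le k-2$.

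We still have to combine the layers into one forest. Build it top-down. First take a focused elimination forest for $S=L_0$ in $G_0=G$. In the recursion, once all vertices of $L_i$ in the current component have been eliminated, the remaining parts are components of $G_{i+1}$ restricted to a subset, and we continue there with $S=L_{i+1}$. Concretely, we construct the forest by eliminating vertices in layer order. When the current component contains no vertices of $L_i$, it lies in $L_{>i}$ and we move to the next layer. Focused elimination allows non-$S$ vertices to appear anywhere, but for counting $L_i$-vertices only the $S$-elimination steps matter.

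The cleanest way to do this: for each component $C$ of $G_i$ minus the already-eliminated set, recursively eliminate vertices of $L_i\cap C$ following an optimal $L_i$-focused decomposition of $C$. This is legitimate because focused treedepth is monotone under taking induced subgraphs, and the apex argument applies to any connected subgraph of $G_i$ that touches $L_i$. Once $L_i\cap C$ is exhausted, each remaining component lies entirely in $L_{>i}$, and we recurse with $L_{i+1}$.

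Along any root-to-leaf path, vertices of $L_i$ are added only during phase $i$. Each phase-$i$ block comes from one focused decomposition with at most $k-2$ vertices of $L_i$ per branch, so no branch contains more than $k-2$ vertices of $L_i$.

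For the focused bound itself, I would induct on $m$. Pick a vertex $s\in S$ to serve as the root. Then argue that in each component of $G'-s$, any $S$-rooted model of $P_{m-1}$ would extend to one of $P_m$ in $G'$ using the branch set containing $s$. For this to work, $s$ must be chosen carefully, for example as an endpoint of a maximal $S$-rooted path model, with its branch set grown greedily. This is the delicate step and the one I expect to be hardest.
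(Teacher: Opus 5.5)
\textbf{There is a genuine gap; in fact there are two.}

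\textbf{The key input is not available.} Your argument rests on the claim that if $G'$ has no $S$-rooted model of $P_m$, then $\td(G',S)\le m-1$. This is not what the abstract announces. The linear bound proved in the paper is Theorem~\ref{thm:main}, $\td(G,S)\le 2\ell-2$. Feeding that into your reduction with $m=|V(H)|-1$ gives only a bound of order $2|V(H)|$, which is exactly the weaker statement mentioned at the start of Section~\ref{sec:ltd}.

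Your sketch for the sharp bound cannot work as stated. Already for $S=V(G')$ it needs a vertex $s$ such that no component of $G'-s$ contains a path on $m-1$ vertices, i.e.\ a vertex lying on every longest path. By the negative answers to Gallai's question (Walther, Zamfirescu), there are connected graphs with no such vertex.

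The missing idea is that, for the fan, one never needs rooted path models. The paper shows directly (Lemma~\ref{lem:ltd} with Observation~\ref{obs:fan}) that $\td(G-u,N_G(u))\ge |V(H)|-1$ forces the fan. It does so via a \emph{weak} $(G,S,T)$-path: a vertical path $P$ of a DFS tree with $\ell$ attachments, each of which is in $S$ or merely adjacent to a component of $G-V(P)$ meeting $S$. All such components are absorbed into the apex branch set together with $u$, so attachments need no private $S$-vertex. As a result, at the lowest common ancestor the induction deletes only the $a_i$ attachment vertices on $R$. It does not also delete a Menger separator, which is exactly where the factor $2$ in Lemma~\ref{lem:main} comes from.

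\textbf{The combination step is also broken.} Your $\td(G',S)$ counts only $S$-vertices on root-to-leaf paths of a full elimination forest. This differs from the paper's definition, where every deleted vertex costs $1$: for a $3$-vertex path with $S$ its endpoints, yours gives $1$ and the paper's gives $2$.

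A phase-$i$ decomposition must in general delete vertices outside $L_i$. For example, if $L_i$ contains the many leaves of a star whose centre is in $L_{i+1}$, you have to delete the centre first. Such vertices lie in $L_{i+1},L_{i+2},\dots$. So your claim that vertices of $L_i$ enter a branch only during phase $i$ is false. Moreover, since your parameter does not count non-$S$ vertices at all, nothing bounds how many vertices of a later layer pile up on a branch before that layer's own phase begins.

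The paper avoids this in three steps:
\begin{itemize}
\item It uses the focused treedepth in which all deleted vertices count.
\item It proves $\td(G-U,N_G(U))\le |V(H)|-2$ for every connected $U$, by contracting $U$ to an apex (Corollary~\ref{cor:ltd}).
\item It then invokes Lemma~\ref{lem:tdgs-to-ltd}, which promises only some layering with $L_0=\{v\}$, not the BFS layering you fix in advance.
\end{itemize}
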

In~\cref{sec:lower-bounds}, we show that the bounds in~\Cref{thm:lpw,thm:ltd} are optimal.

\citet{quickly-excluding-apex-forest} obtained the results by studying the interplay between forbidding graphs as minors and graph decompositions in the so-called \emph{focused} setting (note that it can be seen as a special case of a much more general theory of colorful graphs introduced by Protopapas, Thilikos, and Wiederrecht~\cite{colorful}).
In this setting, we consider graphs with a prescribed subset of vertices that we see as \emph{important}.
We study graphs excluding minor-models such that every branch set contains an important vertex (\emph{rooted models}).
The corresponding graph decompositions are harder to describe.
Intuitively, only a part of the graph is decomposed, and this part should capture the complexity of the important vertices.
For precise definitions of focused pathwidth and focused treedepth see~\Cref{sec:focused-decompositions}.
Statements analogous to the results above on excluded minors also hold in this setting \cite{quickly-excluding-apex-forest}:
Excluding a planar graph as an outer-rooted minor (see~\citet{quickly-excluding-apex-forest} for the definition) is equivalent to having bounded focused treewidth;
excluding a forest as a rooted minor is equivalent to having bounded focused pathwidth; and
excluding a path as a rooted minor is equivalent to having bounded focused treedepth.
Let us state the last result more precisely.
For every graph $G$ and every $S\subseteq V(G)$ (set of important vertices), if $G$ does not contain an $S$-rooted model of the $\ell$-vertex path $P_\ell$, then the $S$-focused treedepth $\td(G,S)$ of $G$ is at most $\binom{\ell}{2}$ \cite[Theorem~6]{quickly-excluding-apex-forest}.
\citet{quickly-excluding-apex-forest} ask if this quadratic bound can be improved to linear. We answer this question in the affirmative:

\begin{thm} \label{thm:main}
	For every positive integer $\ell$, for every graph $G$, and for every
	set $S\subseteq V(G)$, if $G$ has no $S$-rooted model of $P_\ell$, then $\td(G,S) \le 2\ell-2$.
\end{thm}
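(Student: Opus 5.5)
The plan is to argue by induction on $\ell$, using the recursive characterisation of $\td(G,S)$ from \Cref{sec:focused-decompositions}. For each component $C$ of $G$ we have $\td(C,S\cap V(C))=0$ if $S\cap V(C)=\emptyset$; otherwise we delete a vertex and recurse, paying $1$. The bound $2\ell-2$ suggests the following budget: \emph{at the cost of deleting two vertices, every remaining component loses one unit of $\ell$}. A naive version of this fails, because deleting a bounded number of vertices from a $P_\ell$-free graph need not lower the length of the longest path in every component. So I would prove a stronger, \emph{anchored} statement that can be iterated.

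\textbf{Anchored lemma.} Let $G$ be connected, $S\subseteq V(G)$, $r\in V(G)$, and suppose $G$ has no $S$-rooted model of $P_\ell$ in which $r$ lies in an end branch set. Then $G$ has an elimination of depth at most $2\ell-2$ whose first deleted vertex is $r$. Under the same hypothesis I would additionally aim for $\td(G-r,S\setminus\{r\})\le 2\ell-3$.

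The intended proof of the lemma goes as follows.
\begin{enumerate*}[label=(\arabic*)]
\item Delete $r$.
\item In each component $C$ of $G-r$, choose the vertex $r_C\in V(C)$ closest to $S\cap V(C)$ among the neighbours of $r$. Alternatively, take a shortest path from $N(r)$ to $S$ inside $C$ and let $r_C$ be its $S$-endpoint.
\item Show that $C$ has no $S$-rooted model of $P_{\ell-1}$ with $r_C$ in an end branch set. Indeed, such a model could be extended by a new end branch set: the set $\{r\}$ together with the connecting path to $r_C$, absorbed into the neighbouring branch set where necessary. Care is needed so that the new end branch set also meets $S$.
\item Recurse on $(C,r_C)$ with parameter $\ell-1$.
\end{enumerate*}
The second deleted vertex is spent on the rootedness issue. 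If the anchor $r$ itself is not in $S$, the added branch set does not yet meet $S$. We then first delete the $S$-vertex $r_C$ that is closest to $r$, and re-anchor at it. This is where two deletions per decrement of $\ell$ come from. The base case $\ell=1$ is immediate: no rooted $P_1$ means $S=\emptyset$, so the depth is $0$.

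With the lemma available, \Cref{thm:main} follows by applying it to each component $C$ of $G$ with an arbitrary anchor $r\in S\cap V(C)$. If $C$ has no rooted $P_\ell$ at all, it certainly has none anchored at $r$.

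The step I expect to be the main obstacle is step (3): the extension argument must produce a \emph{rooted} model of $P_\ell$ anchored at $r$ from an anchored rooted model of $P_{\ell-1}$ in $C$. The difficulty is that the connecting path from $r$ to $r_C$ may pass through branch sets of the smaller model. I would handle this by choosing $r_C$ to minimise the distance to $r$ among relevant $S$-vertices in $C$, so that the connecting path avoids all other branch sets. If it does not, I would truncate the model at the first branch set the path hits, which still yields an end branch set containing $r$. If truncation loses too much, the fallback is to anchor not at a vertex but at a connected set $A\ni r$ meeting $S$: delete $A$'s attachment vertices, and choose the new anchor as the $S$-vertex reached first by a BFS from $A$, so that the two-vertex budget per level is preserved.
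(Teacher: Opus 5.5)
\paragraph{Gap in step (3).} Your reduction to an anchored statement is sound. When the anchor lies in $S$, step (3) works, because $\{r\}$ is itself a valid new end branch set. The gap is step (3) when $r\notin S$, which is exactly where your ``two deletions per unit of $\ell$'' has to be earned. There the implication you need is false.

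Take $\ell=3$, let $G$ be the star $K_{1,4}$ with centre $x$ and leaves $r,s,t,u$, and let $S=\{s,t,u\}$.
\begin{itemize}
\item Any connected set containing $r$ and a vertex of $S$ contains $x$, and removing it leaves isolated vertices. So $G$ has no $S$-rooted $P_3$ model with $r$ in an end branch set.
\item In $C=G-r$, anchored at the neighbour $x$ of $r$, there is the rooted $P_2$ model $\{x,s\},\{t\}$.
\item Anchored at a nearest $S$-vertex $s$, there is even a rooted $P_3$ model $\{s\},\{x,t\},\{u\}$.
\item After also deleting $s$, the sets $\{x,t\},\{u\}$ form a rooted $P_2$ model anchored at $x$.
\end{itemize}
So under every reading of (2)--(4), the hypothesis you want to pass to the recursive call fails. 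The theorem's bound is not violated here, but your induction cannot proceed. The reason is structural: when $r\notin S$, the new end branch set needs an $S$-vertex of its own. Nothing prevents the smaller model from occupying the nearest $S$-vertices and also the non-$S$ vertices of every connecting path.

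\paragraph{Your fallbacks do not repair this.} Truncating at the first branch set hit goes the wrong way. From an anchored rooted $P_m$ in $C$ you are only guaranteed an anchored $P_{\lceil (m+1)/2\rceil}$ at $r$. So all you can pass down is the absence of an anchored $P_{2\ell-1}$: the parameter grows rather than shrinks. The connected-set anchor requires deleting ``attachment vertices'' whose number you never bound, and bounding that number is the whole difficulty.

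\paragraph{The missing idea.} What you need is an amortised counting argument, which is what the paper supplies. Your anchored statement is in effect what the paper proves. For a DFS tree $T$ rooted at $r$, a $(G,S,T)$-path (spine $P_T(u)$ with $\ell$ disjoint ribs to $S$, the coccyx being an attachment) is exactly an $S$-rooted $P_\ell$ model with $r$ in an end branch set. However, the paper's elimination does not start at the anchor. It proceeds as follows.
\begin{itemize}
\item The induction carries a set $X$ of spine vertices that may not be used as attachments.
\item At the lowest common ancestor $x_0$ of $S$ (in the relevant component of $G-X$), it splits the subtrees below $x_0$ into two groups $A_1,A_2$.
\item For each $i$, it takes a maximum family of $a_i$ disjoint $(V(P_T(x_0))\setminus X)$--$(S\cap A_i)$ paths, with attachment set $X_i$ and a Menger separator $Y_i$, where $|X_i|=|Y_i|=a_i$.
\item Either induction on side $j$, with forbidden set $X\cup X_i$ and order $\ell-a_i$, combines with those $a_i$ paths into order $\ell$.
\item Or this fails for both sides. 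Then, assuming $a_1\le a_2$, deleting $X_1\cup Y_1$ (at most $2a_1$ vertices) leaves only components of focused treedepth below $2(\ell-a_1)-1$.
\end{itemize}
Thus the factor $2$ is paid in batches of $a_1$ units, with Menger bounding the separator. A one-vertex re-anchoring rule has no analogue of this, and, as the example shows, it cannot work as stated.
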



\subsection{Consequences}\label{sec:consequences}

\citet{quickly-excluding-apex-forest} present consequences of analogous results to~\Cref{thm:lpw,thm:ltd} and remark that any improvement to either result (such as in this paper) yields an improvement to the corresponding consequence.
For completeness, we state these.

Recall that the \defin{diameter} of a connected graph $G$, denoted by \defin{$\diam(G)$}, is the maximal distance between two vertices in $G$ taken over all pairs of vertices in $G$.
Since paths can have arbitrarily large treedepth, connected graphs of bounded treedepth have bounded diameter.
In general, the converse is not true, as e.g.\ every fan has diameter at most $2$ and fans have arbitrarily large treedepth.\footnote{A \defin{fan} is a graph obtained from a path by adding a dominant vertex.}
Thus, a natural question to ask is for which minor-closed graph classes the converse is true.
As shown in~\cite[Corollary~3]{quickly-excluding-apex-forest}, these are minor-closed graph classes excluding a fan (or more generally an apex-linear forest).
Our techniques yield an improvement in the multiplicative constant.
Additionally, we note that the diameter in the bound can be replaced with the radius.
Moreover, our bound is tight (see~\Cref{lem:lb}).
Recall that the \defin{radius} of a connected graph $G$, denoted by \defin{$\radius(G)$}, is a minimum integer $r$ such that there exists a vertex $v$ in $G$ in distance at most $r$ to every vertex in $G$.



\begin{cor}\label{cor:td-diam}
	For every apex-linear forest $H$ with at least two vertices, and for every connected graph $G$, if $G$ is $H$-minor-free, then $\td(G) \leq (|V(H)|-2)\cdot \radius(G)+1$.
\end{cor}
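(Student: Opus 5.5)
We derive the corollary from \cref{thm:ltd} by using a BFS layering from a central vertex. Let $r=\radius(G)$ and fix a vertex $v$ at distance at most $r$ from every vertex of $G$. The BFS layering from $v$ has layers $L_0=\{v\},L_1,\dots,L_r$, and every edge of $G$ joins vertices in the same layer or in consecutive layers.

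The first point needs care. Layered treedepth, as defined in \cref{sec:definitions}, presumably asks for a layering and an elimination forest (or treedepth decomposition) such that each layer meets each root-to-leaf path, or each relevant set, in at most $\ltd(G)$ vertices. The layering is chosen by the decomposition, not by us, so we cannot assume it is our BFS layering. I expect one of two things to hold. Either the definition lets us take any BFS layering; this is standard for layered treewidth and pathwidth of connected graphs, where a BFS layering from any vertex costs at most a constant factor, and possibly nothing. Or the proof of \cref{thm:ltd} itself starts from a BFS layering of $G$ from an arbitrary root, which we then choose to be $v$. Checking this, or reproving the statement for the BFS layering rooted at $v$, is the step I expect to be the main obstacle. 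The bound shape $(|V(H)|-2)\cdot r+1$ strongly suggests that $L_0=\{v\}$ contributes $1$ and each of the $r$ remaining layers contributes at most $|V(H)|-2$.

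Assume \cref{thm:ltd} applies with this layering; the theorem needs $|V(H)|\ge 3$. We then have a treedepth decomposition $F$ in which every root-to-leaf path contains at most $|V(H)|-2$ vertices of each $L_i$. If necessary, first make $v$ the root: adding $v$ on top of a decomposition of $G-v$ (which is still $H$-minor-free) costs exactly $1$, and the other layers stay as they are. Every root-to-leaf path then has at most $1+\sum_{i=1}^{r}(|V(H)|-2)=(|V(H)|-2)r+1$ vertices, which gives $\td(G)\le (|V(H)|-2)\cdot\radius(G)+1$.

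It remains to handle $|V(H)|=2$. Then $H$ is $K_2$ or $2K_1$. If $H=K_2$, the graph $G$ has no edges and, being connected, is a single vertex, so $\td(G)=1\le 0\cdot 0+1$. If $H=2K_1$, then $G$ has at most one vertex and the bound holds trivially.
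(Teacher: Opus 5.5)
There is a genuine gap, and it is the step you flagged and then assumed: ``Assume \cref{thm:ltd} applies with this layering.'' \Cref{thm:ltd} only bounds $\ltd(G)$. The layering comes with the decomposition and you do not get to choose it. For a connected graph such a layering can have up to $\diam(G)+1$ nonempty layers, none of which need be a singleton. That only yields a bound of order $(|V(H)|-2)(\diam(G)+1)$, not $(|V(H)|-2)\radius(G)+1$.

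Your first fallback does not rescue this. Passing to a BFS layering at a constant-factor loss would destroy the exact bound.

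What you need is the rooted strengthening \cref{thm:ltd-stronger}: for every prescribed vertex $v$ there is a pair $(F,\mathcal{L})$ of width at most $|V(H)|-2$ with $L_0=\{v\}$. The paper gets this from the structure of its proof. The argument goes through \cref{lem:tdgs-to-ltd}, whose output layering already satisfies $L_0=\{v\}$ for an arbitrary chosen $v$. Its hypothesis, $\td(G-U,N_G(U))<|V(H)|-1$ for every connected $G[U]$, is checked by contracting $U$ to a vertex $u$ and applying \cref{cor:ltd}, which comes from \cref{lem:ltd} and \cref{obs:fan}. So your second guess is essentially right, but it has to be established, not assumed.

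Once $L_0=\{v\}$ is available, you do not need a BFS layering at all. Every edge joins equal or consecutive layers, so a vertex at distance $d$ from $v$ lies in $L_0\cup\dots\cup L_d$. Choosing $v$ central therefore leaves at most $\radius(G)+1$ nonempty layers. Since $|L_0|=1$, each root-to-leaf path of $F$ has at most $1+(|V(H)|-2)\radius(G)$ vertices.

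Your re-rooting step at $v$ is therefore unnecessary. If it means re-applying the theorem to $G-v$, it again produces an uncontrolled layering.

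Your treatment of $|V(H)|=2$ is correct. It is in fact more explicit than the paper, which only invokes \cref{thm:ltd-stronger} for $|V(H)|\ge 3$.
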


We obtain a similar result for pathwidth.

\begin{cor}\label{cor:pw-diam}
	For every apex-forest $H$ with at least three vertices, and for every connected graph $G$, if $G$ is $H$-minor-free, then $\pw(G) \leq (|V(H)|-2)\cdot \radius(G)$.
\end{cor}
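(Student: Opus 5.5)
We derive the bound from \Cref{thm:lpw} via a BFS layering and a standard bound relating pathwidth to layered pathwidth. Fix an apex-forest $H$ with at least three vertices and a connected $H$-minor-free graph $G$, and set $r=\radius(G)$. Choose a center $v$ with every vertex within distance $r$ of $v$. By \Cref{thm:lpw}, $\lpw(G)\le |V(H)|-2$. The subtlety is that the definition of layered pathwidth lets the layering be arbitrary, so we cannot simply assume it is the BFS layering from $v$. We therefore first check that the proof of \Cref{thm:lpw} gives a path decomposition of width $|V(H)|-2$ relative to the BFS layering from an arbitrary prescribed root. The earlier approach of \citet{quickly-excluding-apex-forest} did this, and we expect ours to as well. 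Failing that, we use the standard fact that for a connected graph any layering can be replaced by a BFS layering from $v$ at the cost of at most a factor of $3$. Since that loses the constant, we need the rooted version and commit to it.

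\textbf{Step 1: BFS layering.} Let $L_0=\{v\},L_1,\dots,L_r$ be the BFS layers from $v$. Take a path decomposition $(B_1,\dots,B_m)$ of $G$ with $|B_i\cap L_j|\le |V(H)|-1$ for all $i,j$.

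\textbf{Step 2: bound the bag sizes.} Each bag satisfies
\[
|B_i| \le \sum_{j=0}^{r}|B_i\cap L_j| \le 1+r\,(|V(H)|-1),
\]
where the $1$ comes from $L_0=\{v\}$. This gives width at most $r(|V(H)|-1)$, which is one layer's worth too weak per layer.

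\textbf{Step 3: the main obstacle.} The required bound is $(|V(H)|-2)\,r$, so the $r+1$ layers must together contribute only $r(|V(H)|-2)+1$ vertices per bag. My first attempt is to exploit the singleton layer $L_0$ more cleverly. Contract $L_0\cup\dots\cup L_{j-1}$ into a single vertex $v_j$ adjacent to all of $L_j$. If $G[L_j]$, or more precisely the graph formed by the relevant part of $L_j$ together with the apex $v_j$, contained a model of $H$, then so would $G$, since $H$ is an apex-forest and $v_j$ serves as the apex. Hence each $G[L_j]$ excludes the forest $H-a$, where $a$ is the apex of $H$, and $H-a$ has $|V(H)|-1$ vertices.

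The forest analogue of \Cref{thm:lpw} then gives $\pw(G[L_j])\le |V(H)|-3$, meaning bags of size at most $|V(H)|-2$ per layer. This is exactly what the layered decomposition inside the proof of \Cref{thm:lpw} should establish, and it matches the tightness of $k-2$.

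\textbf{Step 4: assemble the decomposition.} Build a path decomposition of $G$ layer by layer. Concatenate decompositions of consecutive layers, and add to every bag the portion of the adjacent layer needed to cover edges between $L_{j}$ and $L_{j+1}$. This is the delicate part, and it is where I would rely on the structure of the decomposition from \Cref{thm:lpw}, in which each bag meets each layer in at most $|V(H)|-2$ vertices apart from one layer. The count is then $1+r(|V(H)|-2)$ vertices per bag, giving width $(|V(H)|-2)\,r$ as claimed.
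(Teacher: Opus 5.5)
\textbf{The gap: your Step~1 uses the wrong constant.} In this paper the width of a pair $(\mathcal W,\mathcal L)$ is $\max|L_i\cap W_j|$, with no ``$-1$'' as in ordinary pathwidth. So the rooted statement you need, Theorem~\ref{thm:lpw-stronger}, gives a path decomposition and a layering with $L_0=\{v\}$ and $|B_i\cap L_j|\le |V(H)|-2$, not $|V(H)|-1$. The ``obstacle'' in Step~3 is an artifact of this off-by-one.

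You also do not need the BFS layering. For any layering with $L_0=\{v\}$, the two ends of every edge have layer indices differing by at most one. Walking along a shortest path from $v$ therefore puts a vertex at distance $d$ in some $L_i$ with $i\le d$. Taking $v$ to be a center, only $L_0,\dots,L_r$ with $r=\radius(G)$ can be non-empty. With the correct constant, your Step~2 computation gives $|B_i|\le 1+r(|V(H)|-2)$, i.e.\ width at most $(|V(H)|-2)\cdot\radius(G)$. That is exactly the paper's argument.

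\textbf{Steps~3--4 would not work as written.} Your bound on $\pw(G[L_j])$ for each layer is correct (via the apex contraction and the forest-exclusion theorem), but it says nothing about the edges between $L_j$ and $L_{j+1}$. If you concatenate per-layer decompositions, each vertex of $L_j$ must have its interval extended into the segment for $L_{j+1}$ until all its neighbours there have appeared. Nothing bounds how many such vertices are alive at once, so bag sizes are uncontrolled.

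Controlling all layers simultaneously in a single decomposition of $G$ is precisely what layered pathwidth provides. Indeed, your Step~4 ends up appealing to the decomposition from Theorem~\ref{thm:lpw}, at which point Step~3 does no work. The caveat ``apart from one layer'' is also unfounded: the bound holds in every layer, and $L_0$ is handled simply by $|L_0|=1$.

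\textbf{Fix:} drop Steps~3--4, correct the constant in Step~1, and cite the rooted Theorem~\ref{thm:lpw-stronger}.
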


The remainder of the paper is organized as follows:
In \cref{sec:definitions} we give definitions of layered treedepth, layered pathwidth, focused treedepth, and focused pathwidth.
In~\Cref{sec:radius}, we discuss~\Cref{cor:td-diam} and~\Cref{cor:pw-diam}.
In \cref{sec:td-gs} we prove \cref{thm:main}.
In \cref{sec:ltd} we prove \cref{thm:ltd}.
In \cref{sec:lpw} we prove \cref{thm:lpw}.
In \cref{sec:lower-bounds} we give lower bounds constructions.

\section{Definitions} \label{sec:definitions}

For a vertex $v$ in a graph $G$, the \defin{neighbourhood} of $v$ in $G$ is $\mathdefin{N_G(v)}=\{w\in V(G):vw\in E(G)\}$.
The neighbourhood of a subset of vertices $X$ of a graph $G$ is $\mathdefin{N_G(X)}=\bigcup_{v\in X}N_G(v)\setminus X$.

We treat paths as directed objects: Given a graph $G$, a \defin{path} in $G$ is a sequence of distinct vertices $v_1,\ldots,v_k$ with $v_iv_{i+1}\in E(G)$ for each $i\in\{1,\ldots,k-1\}$. For a path $P:=v_1,\ldots,v_k$, the \defin{first vertex} $v_1$ and the \defin{last vertex} $v_k$ of $P$ are the \defin{endpoints} of $P$ and $v_2,\ldots,v_{k-1}$ are the \defin{internal vertices} of $P$. (The vertex of a $1$-vertex path is both the first and last vertex of the path.)
Given a graph $G$ and two sets $A,B \subseteq V(G)$, an \defin{$A$--$B$ path} in $G$ is a path $P=v_1,\ldots,v_k$ with $V(P)\cap A = \{v_1\}$ and $V(P)\cap B = \{v_k\}$.

Given a graph $G$, and subsets of vertices $A$, $B$, and $X$ of $G$, we say that $X$ \defin{separates} $A$ and $B$ in $G$ if every connected component of $G - X$ contains vertices of at most one among $A$ and $B$.

A \defin{rooted tree} is a tree $T$ with a designated root vertex $v\in V(T)$.  For a vertex $x$ in a rooted tree $T$,  we use the notation \defin{$P_T(x)$} to denote the unique path from $x$ to the root of $T$.
Each path of this form is called \defin{vertical} in $T$.
A vertex $y$ of a rooted tree $T$ is a \defin{$T$-ancestor} of a vertex $x$ of $T$ if and only if $y\in V(P_T(x))$.
If $y$ is a $T$-ancestor of $x$, then $x$ is a \defin{$T$-descendant} of $y$.  (Note that $x$ is an ancestor and a descendant of itself.)  For any vertex $x$ in a rooted tree $T$, $\mathdefin{T_x}$ denotes the subtree of $T$ containing $x$ and all descendants of $x$.
The \defin{lowest common ancestor} of a non-empty subset $S\subseteq V(T)$ is the vertex $x_0$ of $T$ that maximizes $|V(P_T(x_0))|$ among all vertices in $\{x\in V(T)\suchthat S\subseteq V(T_x)\}$.
The \defin{vertex-height} of a rooted tree $T$ is the maximum number of vertices $\max\{|V(P_T(v))|:v\in V(T)\}$ on a path from a root to a leaf in~$T$.

A \defin{rooted forest} is the union of pairwise vertex-disjoint rooted trees.  A vertex $y$ in a rooted forest $F$ is an \defin{$F$-ancestor} of a vertex $x$ (and $x$ is an \defin{$F$-descendant} of $y$)  if $x$ and $y$ belong to the same tree $T$ in $F$ and $y$ is a $T$-ancestor of $x$. When the forest $F$ (or tree $T$) is clear from context we will simply say that $y$ is an \defin{ancestor} of $x$ (and $x$ is a \defin{descendant} of $y$).  The \defin{vertex-height} of a rooted forest is the maximum vertex-height over all trees in the forest.

Given a connected graph $G$ and a vertex $r\in V(G)$, an \defin{$r$-rooted DFS tree of $G$} is any rooted spanning tree of $G$ that is rooted at $r$ and has the property that $N_G(V(T_x)) \subseteq P_T(x)$, for each $x\in V(T)$. In words, all edges of $G$ leaving $T_x$ lead to an ancestor of $x$.
It is well-known that such a spanning tree always exists.

\subsection{Layered treedepth and pathwidth}\label{sec:parameters}

Let $\N$ be the set of non-negative integers.
A \defin{layering} of a graph $G$ is a sequence $(L_i \suchthat i \in \N)$ of pairwise disjoint subsets of $V(G)$ whose union is $V(G)$ and such that for every edge $uv \in E(G)$ there is a non-negative integer $i$ such that $u,v \in L_i \cup L_{i+1}$.
The \defin{closure} of a rooted forest $F$ is the graph obtained from $F$ by adding edges between every pair of distinct vertices where one vertex of the pair is an ancestor of the other.

We say that $F$ is an \defin{elimination forest} of a graph $G$ if $V(F)=V(G)$ and $G$ is a subgraph of the closure of $F$.
The \defin{treedepth} of a $G$, denoted by \defin{$\td(G)$}, is $0$ if $G$ is empty, and otherwise is the minimum vertex-height of an elimination forest of $G$.
When $\mathcal{L} = (L_i \suchthat i \in \N)$ is a layering of a graph $G$ and $F$ is an elimination forest of $G$, then the \defin{width of $(F,\mathcal{L})$} is the maximum value of $|L_i \cap V(R)|$ where $i \in \N$ and $R = V(P_F(x))$ for some $x \in V(G)$.
The \defin{layered treedepth} of $G$ is the minimum width over such pairs $(F,\mathcal{L})$.

A \defin{path decomposition} of a graph $G$ is a sequence $\mathcal{W} = (W_1,\dots,W_m)$ of subsets of $V(G)$ (called \defin{bags} of $\mathcal{W}$) such that for each edge $uv\in E(G)$ there is $i \in \{1,\dots,m\}$ with $u,v \in W_i$ and for each vertex $v\in V(G)$ the set of $i \in \{1,\dots,m\}$ with $v \in W_i$ is a non-empty interval.
The \defin{width} of $\mathcal{W}$ is the maximum size of $W_i$ over $i \in \{1,\dots,m\}$ minus $1$.
The \defin{pathwidth} of $G$, denoted by \defin{$\pw(G)$}, is the minimum width of a path decomposition of $G$.
When $\mathcal{L} = (L_i \suchthat i \in \N)$ is a layering of a graph $G$ and $\mathcal{W} = (W_1,\dots,W_m)$ is a path decomposition of $G$, then the \defin{width of $(\mathcal{W},\mathcal{L})$} is the maximum value of $|L_i \cap W_j|$ where $i \in \N$ and $j \in \{1,\dots,m\}$.
The \defin{layered pathwidth} of $G$ is the minimum width over such pairs $(\mathcal{W},\mathcal{L})$.

\subsection{Focused treedepth and pathwidth} \label{sec:focused-decompositions}

For graphs $G$ and $H$, a \defin{model of $H$} in $G$ is a set $\mathcal B:=\{B_x\suchthat x\in V(H)\}$ of pairwise vertex-disjoint connected subgraphs of $G$ indexed by the vertices of $H$, called \defin{branch sets}, such that, for each edge $xy$ in $H$, $G$ contains an edge with one endpoint in $V(B_x)$ and one endpoint in $V(B_y)$.
Additionally, for brevity, we define $V(\mathcal B):= \bigcup_{x\in V(H)}V(B_x)$.
We say that $G$ contains $H$ as a \defin{minor} if there is a model of $H$ in $G$; otherwise, we say that $G$ is \defin{$H$-minor-free}.
Given $S\subseteq V(G)$, an $H$-model $\{B_x\suchthat x\in V(H)\}$ in $G$ is \defin{$S$-rooted} if $B_x$ contains at least one vertex in $S$, for each $x\in V(H)$.

An example of a focused graph-parameter is the focused treedepth.
For a graph $G$, let \defin{$\mathcal{C}(G)$} denote the set of connected components of $G$.
For a graph $G$ and a set $S\subseteq V(G)$, the \defin{$S$-focused treedepth of $G$}, also called simply the \defin{treedepth of $(G,S)$}, is defined recursively as
\[
	\mathdefin{\td(G,S)} :=
	\begin{cases}
		0                                                        & \text{if $S=\emptyset$,}                             \\
		\max\{\td(C, S \cap V(C))\suchthat C\in\mathcal{C}(G)\}  & \text{if $S\neq\emptyset$ and $|\mathcal{C}(G)|>1$,} \\
		1+\min\{\td(G-v, S \setminus \{v\})\suchthat v\in V(G)\} & \text{otherwise.}
	\end{cases}
\]
It is an easy exercise to show that for every graph $G$, we have $\td(G,V(G)) = \td(G)$.

Sometimes, it is handy to use an equivalent definition of focused treedepth.
Given a graph $G$ and $S\subseteq V(G)$, an \defin{elimination forest of $(G, S)$} is a rooted forest $F$ such that $S\subset V(F)\subset V(G)$, $F$ is an elimination forest of $G[V(F)]$, and for every connected component $C$ of $G-V(F)$, there exists a root to leaf path in $F$ whose vertex set contains $N_G(V(C))$.
It is easy to check, see~\cite[Section~5]{quickly-excluding-apex-forest}, that $\td(G, S)$ is equal to the minimum vertex-height of an elimination forest of $(G, S)$.

A second example of a focused graph-parameter is the focused pathwidth.
Let $G$ be a graph and $S \subseteq V(G)$.
A \defin{path decomposition of $(G,S)$} consists of an induced subgraph $J$ of $G$ such that $S$ is contained in $V(J)$, and a path decomposition $\mathcal{W}$ of $J$ such that  for every connected component $C$ of $G-V(J)$, there exists a bag of $\mathcal{W}$ containing all the neighbours of $V(C)$ in $G$.
The \defin{$S$-focused pathwidth of $G$}, also called simply the \defin{pathwidth of $(G,S)$}, denoted by \defin{$\pw(G,S)$} is the minimum width of a path decomposition of~$(G,S)$.
Again, for every graph $G$, we have $\pw(G,V(G)) = \pw(G)$.

\section{Excluding a fan in bounded-radius graphs} \label{sec:radius}

To obtain~\Cref{cor:pw-diam} and~\Cref{cor:td-diam}, we need slightly stronger variants of~\Cref{thm:lpw} and~\Cref{thm:ltd}, which will fortunately follow directly from our proofs.

\begin{thm} \label{thm:lpw-stronger}
	For every apex-forest $H$ with at least three vertices, for every $H$-minor-free graph $G$, and for every vertex $v\in V(G)$, there exists a path decomposition $\mathcal{W}$ of $G$, and a layering $\mathcal{L}=(L_i\colon i\in \mathbb{N})$ of $G$, such that $L_0=\{v\}$, and the width of $(\mathcal W, \mathcal{L})$ is at most  $|V(H)|-2$.
\end{thm}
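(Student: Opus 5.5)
We will prove this stronger form of \Cref{thm:lpw} directly, through the BFS layering from $v$ combined with a focused-pathwidth argument. We may assume $G$ is connected; otherwise we handle each component separately, rooting the others at arbitrary vertices, shift the layerings so that only $v$ occupies layer $0$, and concatenate the path decompositions. Let $L_i$ be the set of vertices at distance exactly $i$ from $v$. Then $L_0=\{v\}$ and $(L_i\colon i\in\NN)$ is a layering. Write $H$ as $H'$ plus an apex $a$, where $H'$ is a forest on $k-1$ vertices, $k=|V(H)|$. The key observation is the following. For each $i\ge 1$, contract the ball $L_0\cup\dots\cup L_{i-1}$, which is connected, into a single vertex adjacent to all of $L_i$, and delete the later layers' influence appropriately. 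Then any $L_i$-rooted model of $H'$ in $G-(L_0\cup\dots\cup L_{i-1})$ extends to a model of $H$ in $G$. Hence, for each $i$, the graph $G_i:=G-(L_0\cup\dots\cup L_{i-1})$ has no $L_i$-rooted model of the forest $H'$.

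The main ingredient will be a focused pathwidth lemma, proved in \cref{sec:lpw}: if $(G',S)$ has no $S$-rooted model of a forest $F$ with $|V(F)|=k-1\ge 2$, then there is a path decomposition of $(G',S)$ in which every bag meets $S$ in at most $k-2$ vertices. We intend to prove it with a sharper rooted analogue of the Bienstock--Robertson--Seymour--Thomas / Diestel forest argument, using the $2k-3 \to k-2$ improvement from the paper's focused machinery. We will apply this lemma to $(G_i,L_i)$ to get decompositions $\mathcal W_i$ of induced subgraphs $J_i\supseteq L_i$ of $G_i$. Every component $C$ of $G_i-V(J_i)$ has its attachments to $J_i$ inside one bag.

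Next we glue the pieces into a single path decomposition of $G$ with $|L_i\cap W|\le k-2$ for all bags $W$. We proceed as in \citet{quickly-excluding-apex-forest}: build the decomposition by induction on $i$, going backwards. Concretely, we will show by induction that for each $i$ there is a path decomposition of $G[L_i\cup L_{i+1}\cup\cdots]$ in which each bag meets every layer in at most $k-2$ vertices. The inductive step inserts the decomposition of the deeper part into $\mathcal W_i$: each component $C$ of $G_i-V(J_i)$ that avoids $L_i$ is placed, using its own inductively obtained decomposition, right after the bag of $\mathcal W_i$ containing $N(V(C))$, and we add $N(V(C))$ to all of its bags.

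This gluing is the main obstacle. Adding $N(V(C))$ to inserted bags must not push the count in any layer above $k-2$. This requires $N(V(C))$ to lie in layers disjoint from those already heavily used in $C$'s decomposition, or to be accounted for within the bound. We would first try the following arrangement: $N(V(C))\cap L_i$ is already counted in that bag, and the components are chosen so that $N(V(C))\subseteq L_i\cup L_{i+1}$. The $L_{i+1}$ part of $N(V(C))$ must then be absorbed by arguing that it lies in $J_{i+1}$'s focus. The endpoint claim is $L_0=\{v\}$, which holds by construction, and the apex vertex $v$ alone yields width bound $1\le k-2$ for layer $0$.
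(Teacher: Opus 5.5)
Your opening observation is correct: the ball $L_0\cup\dots\cup L_{i-1}$ is a connected apex for every $L_i$-rooted model of $H'$. However, the two steps that carry the proof are not supplied.

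\textbf{The focused lemma.} You only announce it. As you formulate it, it assumes merely that there is no $S$-rooted model of the forest, so the apex has already been thrown away, and that is exactly where the difficulty lies. The Diestel-type argument of \cref{lem:finding-sep} produces models rooted on a separator $V(A)\cap V(B)$, not on $S$. Moreover, the one-vertex extension step can swallow the last vertex of $S$ (the case $S\setminus V(A_0)=\{u'\}$), and then it yields nothing. The paper proves no such rooted-forest statement. Its new ingredient, \cref{lem:finding_F+} together with \cref{cor:finding_F+}, keeps the apex vertex $u$ (the contraction of $U$) in play:
\begin{itemize}
\item In outcome~(1), all of $C-V(A_1)$ is merged with $u$ into the apex branch set. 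Every component of $C-V(A_1)$ meets $N_G(u)$, and every separator vertex sees it by (1.3). So a separator-rooted model of $F$ suffices.
\item In outcome~(2), $u$ itself is the branch set of the leaf $x$; it is adjacent to $M_y\ni v\in N_G(u)$. The apex branch set is the union of the paths from the single remaining $s\in S$ to the separator.
\end{itemize}
This gives $\pw(G-U,N_G(U))\le |V(H)|-3$. That is a bound on the total size of the bags, which is strictly stronger than your $|W\cap S|\le k-2$ and is what is actually needed.

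\textbf{The gluing.} You flag this step as the main obstacle and do not resolve it. With the BFS layering, the bags of $\mathcal W_i$ contain the vertices of $V(J_i)\setminus L_i$, which lie in deeper layers and are not controlled by any bound on $|W\cap L_i|$. When you add $N(V(C))$ to every bag of the decomposition of a component $C$, its vertices in $L_{i+1}$ (or deeper) are added on top of the up to $k-2$ vertices of those layers already present. Nothing forces $N(V(C))\subseteq L_i\cup L_{i+1}$. Also, your induction hypothesis concerns $G[L_i\cup L_{i+1}\cup\cdots]$, not the components $C$.

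The paper does no gluing of its own. It invokes \cref{lem:pwgs-to-lpw}, adapted from Lemma~15 of \citet{quickly-excluding-apex-forest}. That lemma's hypothesis is the focused bound $\pw(G-U,N_G(U))<|V(H)|-2$ (total bag size) for \emph{every} connected $U\subseteq V(G)$, not only for balls around $v$. The proof of \cref{thm:lpw-stronger} is then immediate: contract an arbitrary connected $U$ to a vertex $u$ and apply \cref{cor:finding_F+}. To complete your route, you would need to prove your focused lemma with the apex retained, and either reprove that gluing lemma or show how your BFS-based insertion avoids the double counting above.
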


\Cref{thm:lpw-stronger} implies \cref{cor:pw-diam} as follows: By choosing $v$ so that $\max\{\dist_G(v,w):w\in V(G)\}=\radius(G)$, the layering $\mathcal{L}=(L_i:i\in\N)$  obtained from \cref{thm:lpw-stronger} has at most $\radius(G)+1$ non-empty layers, including $L_0$. Then, for every bag $W\in\mathcal{W}$,   \[|W|=|W\cap L_0|+\sum_{i\in\N}|W\cap L_i|\le 1+\max\{|W\cap L_i|\suchthat i\in\N\} \cdot \radius(G)\le 1+(|V(X)|-2)\cdot \radius(G).\]
Thus, $\max\{|W|\suchthat W\in\mathcal{W}\}\le 1+(|V(X)|-2)\cdot \radius(G)$, so $\mathcal{W}$ is a path decomposition of $G$ of width at most $(|V(X)|-2)\cdot\radius(G)$.

\begin{thm} \label{thm:ltd-stronger}
	For every apex-linear forest $H$ with at least three vertices, for every $H$-minor-free graph $G$, and for every vertex $v\in V(G)$, there exists an elimination forest $F$ of $G$, and a layering $\mathcal{L}=(L_i \colon i\in \mathbb N)$ of $G$, such that $L_0=\{v\}$, and the width of $(F, \mathcal{L})$ is at most  $|V(H)|-2$.
\end{thm}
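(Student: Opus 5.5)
We prove \cref{thm:ltd-stronger} with the standard BFS-layering argument in the focused setting. Reduce first to connected $G$, since the closure and width conditions are componentwise. Fix $v$ and let $L_i=\{w\colon \dist_G(v,w)=i\}$; this is a layering with $L_0=\{v\}$. Write $H$ as an apex vertex $a$ plus a linear forest $H-a$; we may assume $H-a$ is a path $P_\ell$ with $\ell=|V(H)|-1$, since every apex-linear forest is a subgraph of such a fan. Excluding this fan is then what we must exploit.

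The elimination forest is built layer by layer, from $L_0$ outward. For each $i\ge1$, contract $G[L_0\cup\dots\cup L_{i-1}]$ into a single vertex $r$, which is possible because that subgraph is connected; call the result $G_i$. An $L_i$-rooted model of $P_\ell$ in a suitable piece of $G_i-r$, where every vertex of $L_i$ is adjacent to $r$, would give an $H$-minor with $r$ as the apex. Applying \cref{thm:main} here would give only $2\ell-2$ per layer, which is too weak. So the real tool is the sharper fact the paper proves on the way to \cref{thm:ltd}: if every vertex of $S$ has a neighbour outside the component under consideration, connected to a common apex, and there is no $S$-rooted $P_\ell$, then the focused treedepth is at most $\ell-1=|V(H)|-2$. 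I would use that lemma exactly as it is used in the proof of \cref{thm:ltd}, since I am assuming that proof is carried out with a BFS layering from an arbitrary start vertex.

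Assembly is by induction on $i$. Suppose we have an elimination forest $F_{i-1}$ of $(G,L_0\cup\dots\cup L_{i-1})$ in which every root-to-leaf path meets each layer in at most $|V(H)|-2$ vertices. Take each component $C$ of $G-V(F_{i-1})$. Its neighbourhood lies on one root-to-leaf path $P$ of $F_{i-1}$, and $V(C)$ contains only vertices of layers $\ge i$. Apply the per-layer lemma to $(C, S\cap V(C))$ and hang the resulting elimination forest below the deepest vertex of $P$. This gives the closure property for $C$: all of its outside neighbours are ancestors, and its leftover components attach to a vertical path. The new vertices added in this step are, as far as the lemma guarantees, focused on $L_i$, but the lemma may also place vertices of other layers $\ge i$ in the forest, and that would break the per-layer count.

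The main obstacle is therefore making the focused decomposition at step $i$ use only vertices of $L_i$. The fix I would try first is to apply the lemma to $G[V(C)\cap L_i]$ with the components of $C-L_i$ handled separately. This requires a normalization: any elimination forest of $(C,S)$ can be projected onto $S$ without increasing height, by deleting non-$S$ vertices and reattaching their children. Projection works when $S$ separates the rest, which holds here because layer $L_{i+1}$ attaches only through $L_i$ and deeper layers attach only through $L_{i+1}$. The condition still to be checked is that each component of $C-(C\cap L_i)$ has its neighbourhood on a single vertical path after projection.

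Once that is checked, the conclusion follows: each layer appears at most $|V(H)|-2$ times on any root-to-leaf path, and $L_0=\{v\}$ is the root.
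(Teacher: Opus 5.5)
\paragraph{Gap: the assembly step.} Your assembly step fails, and it cannot be repaired inside a fixed BFS layering. The normalization ``any elimination forest of $(C,S)$ can be projected onto $S$ without increasing height'' is false. Deleting a vertex $x\notin S$ can merge components of $C-S$ that $x$ kept apart, and the neighbourhood of the merged component need not lie on one root-to-leaf path.

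Your own setting already shows this. Take $G=K_{2,n}$ with parts $\{v,b\}$ and $\{s_1,\dots,s_n\}$, and do BFS from $v$, so $L_1=\{s_1,\dots,s_n\}$ and $L_2=\{b\}$.
\begin{itemize}
\item $\td(G-v,L_1)=2$: eliminate $b$ first.
\item The single component $\{b\}$ of $(G-v)-L_1$ is adjacent to all of $L_1$. So any forest that places only $L_1$-vertices at step $1$ must put all $n$ of them on one root-to-leaf path, giving width $n$.
\item Yet $K_{2,n}$ has no minor of the $5$-vertex fan. A model would need at least three branch sets inside the independent set $\{s_1,\dots,s_n\}$. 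These are singletons of degree at most $2$ in the minor, while the fan has only two vertices of degree at most $2$. So the target width is $3$.
\end{itemize}
Separation by $L_i$ is not what matters. What matters is that one component of $C-L_i$ can see many vertices of $L_i$. The alternative of keeping the non-$L_i$ vertices of the focused forest also fails, as you observed: vertices of a layer $L_j$ can then accumulate over steps $i,i+1,\dots,j$ on one root-to-leaf path.

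\paragraph{Missing idea.} The layering should not be fixed in advance as BFS layers; it has to be produced by the recursion. This is what the paper packages in \cref{lem:tdgs-to-ltd}. Its hypothesis is the bound $\td(G-U,N_G(U))<|V(H)|-1$ for \emph{every} connected $U$, not only for the balls $L_0\cup\dots\cup L_{i-1}$. With that lemma the paper's proof is short: contract $U$ to a vertex $u$, apply \cref{cor:ltd} to get the bound (otherwise $H$ is a minor), and invoke \cref{lem:tdgs-to-ltd}, which also gives $L_0=\{v\}$.

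Roughly, the construction behind that lemma works as follows.
\begin{itemize}
\item Take a current piece $D$ with $G-V(D)$ connected.
\item Apply the hypothesis to $U=V(G)\setminus V(D)$. This gives a forest of $(D,S)$ of height at most $|V(H)|-2$, where $S$ is the set of vertices of $D$ with neighbours outside $D$.
\item Put \emph{all} vertices of this forest into the next layer, whatever their distance from $v$.
\item Hang the forest below the vertical path containing $N_G(V(D))$.
\item Recurse on the components of what remains, after normalizing the forest so that $G-V(D')$ stays connected for each new piece $D'$.
\end{itemize}
The layering property holds because any vertex of a later piece adjacent to an already placed vertex lies in that piece's focus set $S$. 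Each root-to-leaf path meets each layer in a vertical path of a single forest, which gives the width bound.

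\paragraph{Minor point.} The per-layer input you need is \cref{cor:ltd}, which uses weak paths together with the apex. The paper does not prove an $\ell-1$ bound merely from the absence of $S$-rooted models of $P_\ell$; under that assumption it gets only $2\ell-2$ (\cref{thm:main}).
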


\Cref{thm:ltd-stronger} is a strengthening of \cref{thm:ltd} that implies \cref{cor:td-diam} by the same argument used in the previous paragraph.

\section{Focused treedepth}\label{sec:td-gs}

In this section, we prove \cref{thm:main}.
Let $G$ be a connected graph, let $S \subseteq V(G)$, and let $T$ be a DFS tree of $G$.  For a positive integer $\ell$, a pair $J:=(P,\{Q_1,\ldots,Q_{\ell}\})$ is called a \defin{$(G,S,T)$-path} of \defin{order} $\ell$ if
\begin{enumerate}[nosep,nolistsep,label=(J\arabic*)]
	\item\label{vertical_spine} $P:=P_T(u)$ for some vertex $u$ of~$G$. The path $P$ is called the \defin{spine} of $J$ and $u$ is called the \defin{coccyx} of $J$.
	\item\label{disjoint_ribs} $Q_1,\ldots,Q_{\ell}$ are pairwise vertex-disjoint $V(P)$--$S$ paths in $G$. For each $i\in\{1,\ldots,\ell\}$, $Q_i$ is a \defin{rib} of $J$, the first vertex of $Q_i$ (the vertex in $P$) is an \defin{attachment} in $J$, and the last vertex of $Q_i$ (the vertex in $S$) is the \defin{tip} of $Q_i$.
	\item\label{coccyx_attachment} The coccyx $u$ of $J$ is an attachment in $J$.
\end{enumerate}

The proof of the following observation is immediate.
(A detail that is perhaps worth pointing out: It follows from the definition of $A$--$B$ paths that each rib of the $(G,S,T)$-path has an attachment on the spine and is otherwise disjoint from the spine.)

\begin{obs}\label{gst_pl}
	Let $\ell$ be a positive integer.
	Let $G$ be a connected graph, let $T$ be a DFS tree of $G$, and let $S\subseteq V(G)$.  If $G$ has a $(G,S,T)$-path of order $\ell$ then $G$ contains an $S$-rooted model of~$P_\ell$.
\end{obs}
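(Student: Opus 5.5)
We build the model directly from the spine and the ribs. Let $J=(P,\{Q_1,\ldots,Q_\ell\})$ be a $(G,S,T)$-path of order $\ell$ with coccyx $u$, so $P=P_T(u)$ runs from $u$ up to the root of $T$. Because the ribs are pairwise vertex-disjoint and each $Q_i$ meets $V(P)$ only in its first vertex, the attachments $a_1,\ldots,a_\ell$ are $\ell$ distinct vertices of $P$. Relabel the ribs so that $a_1,\ldots,a_\ell$ occur in this order along $P$, starting from the root end. Then \ref{coccyx_attachment} forces $a_\ell=u$, the last vertex of $P$.

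Next we cut $P$ into $\ell$ consecutive subpaths $P^1,\ldots,P^\ell$, where $P^i$ contains $a_i$. One concrete choice is to let $P^i$ run from $a_i$ up to the vertex just before $a_{i+1}$, for $i<\ell$, and to add the initial segment of $P$ before $a_1$ to $P^1$. Then $P^\ell=\{u\}$. Define the branch set
\[
B_i := P^i\cup Q_i .
\]
Each $B_i$ is connected, since $Q_i$ is a path that starts at $a_i\in P^i$. The sets $B_1,\ldots,B_\ell$ are pairwise disjoint, because the $P^i$ partition $V(P)$, the ribs are pairwise disjoint, and each rib meets $P$ only at its own attachment. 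Each $B_i$ contains the tip of $Q_i$, which lies in $S$.

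Finally, for each $i<\ell$, the last vertex of $P^i$ is adjacent along $P$ to the first vertex of $P^{i+1}$. This gives an edge of $G$ between $B_i$ and $B_{i+1}$. Hence $\{B_1,\ldots,B_\ell\}$ is an $S$-rooted model of $P_\ell$.

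The only point that needs care is the disjointness of the branch sets. It follows directly from the definition of $V(P)$--$S$ paths and from the ribs being pairwise disjoint. The DFS property of $T$ and condition \ref{coccyx_attachment} are not needed for this observation.
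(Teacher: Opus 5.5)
Your proof is correct and is essentially the argument the paper has in mind. The paper only calls the observation immediate and singles out the same detail you do, namely that each rib meets the spine only at its attachment; your construction (cut the spine into consecutive segments, one per attachment, and attach each rib to its segment) is the natural way to make that precise. You are also right that (J3) and the DFS property are not needed, since $P^\ell$ can simply be taken as the final segment of $P$ starting at $a_\ell$.
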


The heart of the proof of \Cref{thm:main} is the following technical lemma.

\begin{lem} \label{lem:main}
	Let $\ell$ be a positive integer.
	Let $G$ be a connected graph, let $T$ be a DFS tree of $G$, let $S\subseteq V(G)$, and let $X\subseteq V(G)\setminus S$ be such that
	\begin{enumerate}[nosep,nolistsep,label=\rm(\alph*),ref={Assumption (\alph*)}]
		\item $x$ is a $T$-ancestor of $v$ for each $x\in X$ and $v\in S$; \label{item:X-over-S}
		\item $\td(G-X, S)\ge 2\ell-1$. \label{item:td-G-X-Y}
	\end{enumerate}
	Then $G$ contains a $(G,S,T)$-path $J$ of order $\ell$ such that
	\begin{enumerate}[nosep,nolistsep,label=\rm(\roman*),ref={Requirement (\roman*)}]
		\item every vertex of $X$ lies in the spine of $J$; \label{item:X-spine}\label{item:first}
		\item no vertex of $X$ is an attachment in $J$. \label{item:X-degree} \label{item:last}
	\end{enumerate}
\end{lem}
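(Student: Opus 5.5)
We argue by induction on $\ell$, and for fixed $\ell$ by induction on $|V(G)|-|X|$ (equivalently, we take a counterexample with $X$ maximal). Since $X\cap S=\emptyset$ and every vertex of $X$ is a $T$-ancestor of every vertex of $S$, the set $X$ lies on a common vertical path. We let $x^*$ denote the lowest common ancestor of $S$. Then $X\subseteq V(P_T(x^*))$, and $x^*$ is either in $S$ or strictly below every vertex of $X$.

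Our first step is to extend $X$ along $P_T(x^*)$. Suppose $P_T(x^*)$ has a vertex $y\notin X$, $y\notin S$. Then $X':=X\cup\{y\}$ still satisfies Assumption (a). If $\td(G-X',S)\ge 2\ell-1$, induction gives a $(G,S,T)$-path whose spine contains $X'$ and which has no attachment in $X'$, so it also works for $X$. Otherwise $\td(G-X',S)\le 2\ell-2$. Adding $y$ costs at most $1$, so $\td(G-X,S)\le 2\ell-1$, with equality.

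After this step we may assume that every vertex of $P_T(x^*)\setminus X$ either lies in $S$ or is "critical", meaning that deleting it drops the focused treedepth to $2\ell-2$. The DFS property is what makes this useful. Every component of $G-V(P_T(w))$ that meets $S$ lies in some subtree $T_c$ of a child $c$ of a vertex on the path, and its neighbourhood lies on $P_T(c)$.

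The base case is $\ell=1$. Here $\td(G-X,S)\ge1$ means $S\ne\emptyset$. We take a vertex $s\in S$ and use the spine $P_T(s)$ with the single trivial rib $Q_1=(s)$. The coccyx $s$ is then an attachment, $X\subseteq P_T(s)$, and $s\notin X$, as required.

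For the inductive step, the idea is to find two branches. Choose the coccyx candidate $u$ as the deepest vertex with $\td(G[T_u]-X,S\cap T_u)\ge 2\ell-1$ relative to the part of the graph hanging below. If $u$ were removed, each child subtree would have focused treedepth at most $2\ell-2$. The inequality $\td\ge 2\ell-1$ then forces either of the following.

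In case (A), some child subtree $T_c$ has $\td(G[T_c]-X',S)\ge 2\ell-3$ with $X'=X\cup(P_T(u)\cap\ldots)$. We apply induction with $\ell-1$ inside it, with $X$ enlarged by the path vertices, to get $\ell-1$ ribs attached strictly below $X\cup\{u\}$. We then add one more rib from $u$ (or from a path vertex outside $X$) to an $S$-vertex in another subtree. This extra rib is disjoint from the others because it lives in a different subtree.

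In case (B), many subtrees are needed. In that case the spine vertices themselves serve as attachments.

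We must also deal with edges leaving subtrees, which lead only to ancestors. These let us reroute a rib from a subtree up to a spine vertex not in $X$. The reason such a vertex exists is that every vertex of $X$ can be deleted, and since $\td(G-X,S)$ is large the remaining structure cannot be separated from $S$ by $X$ alone.

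The main obstacle is the accounting that explains the constant $2$: why $2\ell-1$ suffices to gain one rib per two units of treedepth. The plan is to show that when the process forces a new attachment, the focused treedepth drops by at most $2$. One unit is lost to deleting the new attachment, which we add to $X$ as a non-attachment spine vertex for the recursive call. The other unit is lost to restricting to a single child subtree together with the rest of the spine. We then invoke the inductive hypothesis for $\ell-1$ with threshold $2\ell-3$.

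Ensuring the new rib avoids the recursive ones is delicate. For this we would route it through a different child subtree than the one used recursively, or make it the trivial rib at the coccyx when the coccyx lies in $S$. The hardest step is the case where every $S$-vertex lies in a single child subtree and the only routes out of it pass through $X$. There, the fact that every non-$X$ spine vertex is critical has to be used to derive a contradiction or to locate a usable attachment.
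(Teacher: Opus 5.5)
There is a genuine gap. The proposal never gives the argument that produces the factor $2$, and the accounting you sketch for it does not close.

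\textbf{Why one rib per round fails.} You want to gain one rib per round, charging one unit of focused treedepth to the new attachment and one unit to ``restricting to a single child subtree together with the rest of the spine''. The second charge has no justification. Components of $G-X$ are glued together through spine vertices outside $X$, so restricting $S$ to one child subtree of a vertex where $S$ branches can collapse the focused treedepth. For example, let the children of $x_0=r_k$ be leaves $s_1,\dots,s_m\in S$, each adjacent to all of the spine $r_1,\dots,r_k$. Then $\td(G,S)=k+1$ for $m>k$, while each single child subtree carries focused treedepth $1$.

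More fundamentally, when recursion into one branch fails, you still have to bound $\td(G-X,S)$. For that you must separate $S\cap A_1$ from $S\cap A_2$, which costs a Menger cut $Y_1$ of size $a_1$ between $V(R)\setminus X$ and $S\cap A_1$. Suppose each round adds only one attachment $y_i$ to $X$. Then the two failure bounds $\td(G-X-y_i,S\cap A_j)\le 2\ell-4$, after deleting $Y_1\cup\{y_1\}$, give only $\td(G-X,S)\le a_1+1+(2\ell-4)$. This is no contradiction once $\min(a_1,a_2)\ge 2$.

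\textbf{Cases (A) and (B) do not repair this.}
In (A), the set $X'=X\cup(P_T(u)\cap\ldots)$ is left undefined.
The quantity $\td(G[T_u]-X,\cdot)$ ignores edges from $T_u$ to ancestors of $u$ outside $X$.
Case (B) gives no construction of disjoint ribs.
The extra rib in (A) is only guaranteed to avoid the recursive ribs when it lives in a different subtree, and you explicitly leave the single-subtree case open.
The criticality preprocessing is never used.

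\textbf{The missing idea: take the ribs into one side in a single Menger-sized batch.}
Let $C$ be a component of $G-X$ attaining $\td(G-X,S)$, let $x_0$ be the lowest common ancestor of $S\cap V(C)$, and let $R=P_T(x_0)$.
If $x_0\in S$, recurse with $X\cup\{x_0\}$ and $\ell-1$, and add $x_0$ as a one-vertex rib.
Otherwise, split the children of $x_0$ inside $C$ into two groups with vertex sets $A_1,A_2$, both meeting $S$.
Let $a_i$ be the maximum number of disjoint $(V(R)\setminus X)$--$(S\cap A_i)$ paths in $G[(A_i\cup V(R))\setminus X]$. Let $X_i$ be their first vertices and $Y_i$ a Menger cut with $|Y_i|=a_i$.
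If $a_i\ge \ell$ you are done.
If instead $\td(G-(X\cup X_i),S\cap A_j)\ge 2(\ell-a_i)-1$, recurse with $\ell-a_i$ and merge. The recursive spine is comparable with $R$. Trimming each recursive rib to start at its last vertex on $R$ keeps it inside $(V(R)\cup A_j)\setminus(X\cup X_i)$, hence disjoint from the $a_i$ ribs in $X_i\cup A_i$.
If both recursions fail and $a_1\le a_2$, delete $X_1\cup Y_1$ from $C$; this is at most $2a_1$ vertices and separates $S\cap A_1$ from $S\cap A_2$. Every remaining component then has focused treedepth at most $2(\ell-a_2)-2\le 2(\ell-a_1)-2$ (on the $A_1$ side) or $2(\ell-a_1)-2$ (on the $A_2$ side). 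Hence $\td(G-X,S)\le 2\ell-2$, a contradiction.

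So the $2$ is $|X_1\cup Y_1|\le 2a_1$: attachments plus cut, paid per batch rather than per rib.
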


Before proving \cref{lem:main}, we show that it implies \cref{thm:main}.  Let $G$ be a graph and let $S\subseteq V(G)$ be such that $G$ contains no $S$-rooted model of $P_\ell$.
Let $\ell':=\lfloor(\td(G,S)+1)/2\rfloor$, so that $2\ell'-1\le \td(G,S)\le 2\ell'$.
Observe that \cref{thm:main} holds trivially if $\td(G,S) = 0$, so we may assume $\td(G,S)\geq1$, and thus $\ell'\geq 1$.
Let $G'$ be a connected component of $G$ such that $\td(G',S')=\td(G,S)$, where $S':=S\cap V(G')$.
Let $T$ be a DFS tree of $G'$ and apply \cref{lem:main} with $G'$, $S'$, $T$, $X=\emptyset$, and $\ell'$ to obtain a $(G',S',T)$-path $J$ in $G'$ of order $\ell'$.
By \cref{gst_pl}, $G'$ contains an $S'$-rooted model of $P_{\ell'}$.  Since $G$ contains no $S$-rooted model of $P_{\ell}$, $\ell' < \ell$, so $\td(G,S) \le 2\ell' \leq 2(\ell-1) = 2\ell-2$, as desired.

We now proceed with the proof of \cref{lem:main}, see~\Cref{fig:outline} for an intuitive outline of the proof.

\begin{figure}[tp]
	\centering
	\includegraphics{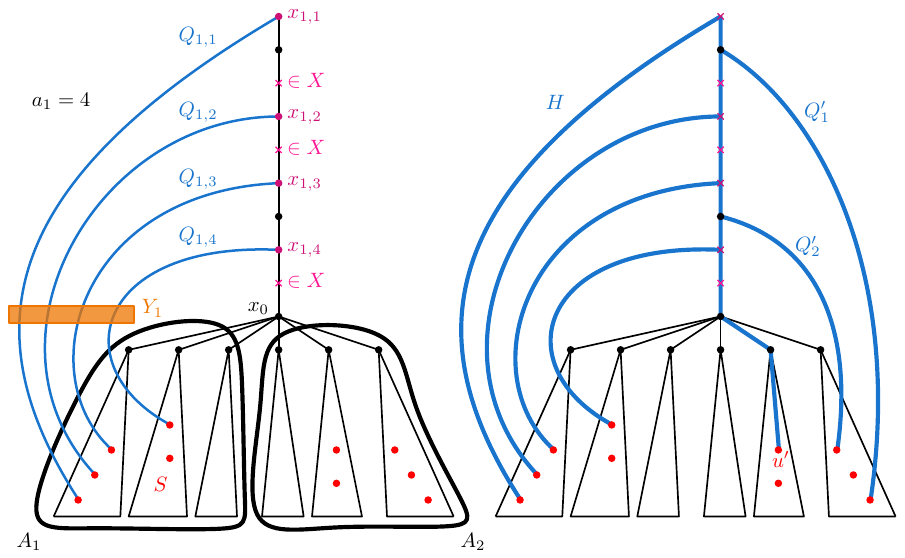}
	\caption{
	An outline of the proof of~\Cref{lem:main}.
	Note that the notation is consistent with the actual proof that we give.
	The set $X$ represents vertices that were selected as attachments in $J$ at higher levels of induction and must therefore not be used as attachments in lower levels of induction (\ref{item:X-degree}).
	Suppose that we work with $\ell = 7$.
	The vertex $x_0$ is the lowest common ancestor of $S$ in $T$.
	In the general case, we split the descendants of $x_0$ into two groups: $A_1$ and $A_2$ both containing elements of $S$.
	Suppose that (as in the figure), there are $a_1 = 4$ pairwise vertex-disjoint paths connecting $S \cap A_1$ and $V(P_T(x_0)) \setminus X$.
	Then, we can add $X_2' = \{x_{1,1},x_{1,2},x_{1,3},x_{1,4}\}$ to $X$ and call induction for $X_2'$, $S_2' = S \cap A_2$, and $\ell-a_`$.
	To this end, we need to assume that $\td(G-X_2',S_2') \geq 2(\ell-a_1)$.
	If this holds, then by induction, we obtain a $(G,S_2',T)$-path of order $\ell-a_1 = 3$ that avoids $X_2'$.
	On the right side of the figure, we show how we combine all these to obtain a $(G,S,T)$-path of order $\ell = 7$.
	When $\td(G-X_2',S_2') < 2(\ell-a_1)$ we can try to argue symmetrically.
	If this fails, it means that $\td(G-X_1',S_1') < 2(\ell-a_2)$.
	Having both, we argue that $\td(G-X,S) < 2\ell$, which will be a contradiction.
	The argument is intuitively simple.
	We assume $a_1 \leq a_2$ and we eliminate vertices of $X_1 \cup Y_1$ where $Y_1$ is a set of $a_1$ elements separating $A_1 \cap S$ from $V(R) \setminus X$.
	It suffices to argue that for every connected component $C$ of $G - (X \cup X_1 \cup Y_1)$, we have $\td(C, S \cap V(C)) < 2\ell - 2a_1$.
	}
	\label{fig:outline}
\end{figure}

\begin{proof}[Proof of \cref{lem:main}.]
	The proof is by induction on $\ell$.
	Throughout this proof, any mention of ancestor/descendant/child relations is with respect to the DFS tree $T$.

	If $\ell=1$ then, since $\td(G-X,S)\ge 1$, $S$ is non-empty.  Let $v$ be a vertex of $S$ that minimizes $|P_T(v)|$, let $P:=P_{T}(v)$, and let $Q_1$ be the $1$-vertex path that contains $v$.  Then $H:=(P,\{Q_1\})$ is a $(G,S,T)$-path of order $1$.
	By \ref{item:X-over-S}, the spine of $J$ contains every vertex in $X$, so $J$ satisfies \ref{item:X-spine}.
	Since $v\in S$ is the only attachment of $J$ and $X\subseteq V(G)\setminus S$, $J$ satisfies \ref{item:X-degree}.  Thus $J$ is a $(G,S,T)$-path of order $1$ satisfying \ref{item:first} and \ref{item:last}.  We now assume that $\ell\ge 2$.

	By the definition of $\td(\cdot,\cdot)$, there exists a connected component $C$ of $G-X$ such that $\td(C,S\cap V(C))=\td(G-X,S)\ge 2\ell-1$. Therefore, $|S\cap V(C)|\ge 2\ell-1 >0$. Let $x_0$ be the lowest common ancestor of $S\cap V(C)$ in $T$.

	First, suppose that $x_0\in S$.
	Let $X' = X \cup \{x_0\}$ and $S' = (S\cap V(C)) \setminus \{x_0\}$.
	We claim that \ref{item:X-over-S} and \ref{item:td-G-X-Y} are satisfied for $X'$, $S'$, and $\ell' := \ell-1$.
	By the definition of $x_0$ and \ref{item:X-over-S}, $x$ is an ancestor of $v$ for each $x \in X'$ and $v \in S'$, and by~\ref{item:td-G-X-Y},
	\[\td(G-X',S') \ge \td(C-x_0,(S\cap V(C))\setminus\{x_0\}) \geq \td(C,S\cap V(C))-1 \geq 2\ell-2 > 2\ell'-1.\]
	Therefore, we can apply induction to  $X'$, $S'$, and $\ell'=\ell-1$ obtaining a $(G,S',T)$-path $J':=(P, \{Q_1,\ldots,Q_{\ell-1}\})$ in $G$ of order $\ell-1$ satisfying~\ref{item:first} and \ref{item:last} for $X'$.
	Since $x_0 \in X'$, $x_0$ is not an attachment in $J'$.
	Recall that $S \subset V(G) \setminus X$, hence, $x_0 \notin X$.
	Taking $Q_\ell$ to be the one-vertex path $x_0$, it follows that $J:=(P,\{Q_1,\ldots,Q_{\ell}\})$ is a $(G,S,T)$-path in $G$ of order $\ell$ satisfying~\ref{item:first} and \ref{item:last} for $X$.

	Next, suppose that $x_0 \notin S$ and let $R = P_T(x_0)$.
	Note that $X \subseteq V(R)$ by~\ref{item:X-over-S}. Let $Z$ be the set of all children $x$ of $x_0$ in $T$ such that $V(T_x)\subseteq V(C)$. Let $Z_0 \subseteq Z$ be the set of all the children $x$ of $x_0$ such that $S\cap V(C)$  intersects $V(T_x)$.  Since $x_0 \notin S$, it follows that $|Z_0| > 1$.
	Let $Z_1$ and $Z_2$ be any partition of $Z$ such that $Z_i \cap Z_0 \neq \emptyset$ for each $i \in \{1,2\}$.  For each $i\in\{1,2\}$, let $A_i = \bigcup_{x \in Z_i} V(T_{x})$ and let $G_i = G[(A_i \cup V(R)) \setminus X ]$.  Since $T$ is a DFS tree, $V(R)$ separates $A_1$ and $A_2$ in $G$.

	Let $\{1,2\} = \{i,j\}$.  We claim that $G_i$ contains a $(V(R)\setminus X)$--$(S\cap A_i)$ path.  Indeed, since $A_j\cup A_i\subseteq V(C)$ and $V(C)\subseteq V(G-X)$, the graph $G-X$ contains an $A_j$--$(S\cap A_i)$ path $Q$. Since $V(R)$ separates $A_i$ and $A_j$ in $G$, this path necessarily contains a vertex in $V(R)\setminus X$. Let $x$ be the last vertex of $Q$ in $V(R)\setminus X$. Then the subpath of $Q$ that begins at $x$ and leads to the last vertex of $Q$ is a $(V(R)\setminus X)$--$(S\cap A_i)$ path in $G_i$.
	Let $a_i$ be the maximum number of pairwise vertex-disjoint $(V(R) \setminus X)$--$(S \cap A_i)$ paths in $G_i$ (we have just proved that $a_i \geq 1$) and fix such paths $Q_{i,1},\dots,Q_{i,a_i}$.
	If $a_i\geq \ell$, then there is $R' \subset R$ such that $(R', \{Q_{i, 1}, \ldots Q_{i, \ell}\})$
	is a $(G, S, T)$-path of order $\ell$ satisfying \ref{item:first} and \ref{item:last}, and we are done.  Thus, we may assume that $a_i < \ell$.
	By Menger's Theorem, there exists $Y_i \subseteq V(G_i)$ such that $|Y_i| = a_i$ and there is no $(V(R) \setminus X)$--$(S \cap A_i)$ path in $G_i - Y_i$.  For each $s\in\{1,\ldots,a_i\}$, let $x_{i,s}$ be the first vertex of $Q_{i,s}$.  Let $X_i:=\{x_{i,1},\ldots,x_{i, a_i}\}$, and note that the sets $X_i$ and $Y_i$ are not necessarily disjoint.

	Let $X'_j := X \cup X_i$ and let $S'_j := S \cap A_j$.  We will now argue that $\td(G - X'_j, S'_j) \geq 2(\ell-a_i)-1$ implies the existence of a $(G,S,T)$-path $J$ of order $\ell$ that satisfies \ref{item:first} and \ref{item:last}. Assume that $\td(G - X'_j, S'_j) \geq 2(\ell-a_i)-1$.  By \ref{item:X-over-S} and the definition of $X_i$, $x$ is an ancestor of $v$ for each $x \in X'_j$ and $v \in S'_j$, so $X'_j$ and $S'_j$ satisfy \ref{item:X-over-S}.  By assumption, $X'_j$ and $S'_j$ satisfy \ref{item:td-G-X-Y} for $\ell':=\ell-a_i>0$.  Therefore, we can apply induction to $X'_j$, $S'_j$, and $\ell'$,
	to obtain a $(G,S'_j,T)$-path $J':=(P',\{Q'_1,\ldots,Q'_{\ell-a_i}\})$ in $G$ of order $\ell-a_i$ satisfying~\ref{item:first} and \ref{item:last} for $X'_j$.

	By \ref{vertical_spine}, there exists $u'\in V(T)$ such that $P'=P_T(u')$.
	We claim that $u'\in V(R) \cup A_j$.
	For the sake of contradiction, assume otherwise.  Then, since $T$ is a DFS spanning tree, $V(P'-u')$ separates $V(T_{u'})$ from $A_j$ in $G$.  By \ref{coccyx_attachment}, one of the ribs of $J'$ is a $V(P')$--$S'_j$ path $Q'_t$ that begins at $u'$.  Since $S'_j\subseteq A_j$, $Q'_t-u'$ contains a vertex $z$ of $P'-u'$.
	Thus, $\{u', z\} \subseteq V(Q'_t)\cap V(P')$. However, since $Q'_t$ is a $V(P')$--$S'_j$ path, we must have $V(Q'_t)\cap V(P')=\{u'\}$, which is a contradiction.
	Therefore $u'\in V(R) \cup A_j$, as claimed. It follows that $R\supseteq P'$ (if $u'$ is an ancestor of $x_0$) or $P'\supseteq R$ (if $x_0$ is an ancestor of $u'$).  In either case, $R\cup P' = P_T(x)$ for some vertex $x\in\{x_0,u'\}$.

	We now adjust the ribs in the $(G,S'_j,T)$-path $J'$ to make them suitable for inclusion in our $(G,S,T)$-path $J$. Let $t \in \{1,\dots,\ell-a_i\}$.
	If $Q'_t$ is disjoint from $R$, then let $Q_t := Q'_t$ and let $u_t$ be the first vertex of $Q_t$.  Otherwise, let $u_t$ be the last vertex of $Q'_t$ in $V(R)$.
	We define $Q_t$ to be the subpath of $Q'_t$ from $u_t$ to the last vertex of $Q'_t$.  Since $V(R)$ separates $A_j$ from $V(G)\setminus A_j$ in $G$, it follows that $V(Q_t-u_t)\subseteq A_j$.

	The paths $Q_1,\ldots,Q_{\ell-a_i}$ will be ribs in our $(G,S,T)$-path $J$.  The remaining $a_i$ ribs of $J$ will be $Q_{i,1},\ldots,Q_{i,a_i}$.  Thus, $x_{i, 1},\ldots,x_{i, a_i},u_1,\ldots,u_{\ell-a_i}$ will be the attachments of $J$.   Since $\{x_{i, 1},\ldots,x_{i, a_i},u_1,\ldots,u_{\ell-a_i}\}\subseteq V(R)\cup V(P')$ and $R\cup P'= P_T(x)$ for some $x\in V(T)$, there exists a vertex $u\in \{x_{i, 1},\ldots,x_{i, a_i},u_1,\ldots,u_{\ell-a_i}\}$ such that $\{x_{i, 1},\ldots,x_{i, a_i},u_1,\ldots,u_{\ell-a_i}\}\subseteq V(P_T(u))$.  Let $P:=P_T(u)$ and   let
	\[
		J:=(P,\{Q_{i,1},\ldots,Q_{i,a_i},Q_1,\ldots,Q_{\ell-a_i}\}) \enspace .
	\]

	\begin{clm}\label{clm:gst}
		$J$ is a $(G,S,T)$-path of order $\ell$.
	\end{clm}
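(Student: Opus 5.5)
We verify the three conditions \ref{vertical_spine}--\ref{coccyx_attachment} directly, using the choice of $u$ and the separation properties of the DFS tree $T$.

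\textbf{Spine and coccyx.} By construction $P=P_T(u)$, so \ref{vertical_spine} holds. The vertex $u$ is one of the attachments $x_{i,1},\ldots,x_{i,a_i},u_1,\ldots,u_{\ell-a_i}$, so \ref{coccyx_attachment} holds once we know that each rib starts at the claimed attachment.

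\textbf{Each rib is a $V(P)$--$S$ path.} Consider first $Q_{i,s}$. It is a $(V(R)\setminus X)$--$(S\cap A_i)$ path in $G_i$. Its first vertex $x_{i,s}$ lies in $V(P)$, since all attachments lie in $P$. Its remaining vertices lie in $A_i$, because $V(R)\cap V(Q_{i,s})=\{x_{i,s}\}$ and $V(G_i)\subseteq A_i\cup V(R)$. We also need $V(P)\cap A_i=\emptyset$. Indeed, $P=P_T(u)$ with $u\in V(R)\cup V(P')$, and $V(R)\cup V(P')\subseteq V(R)\cup A_j$, so every vertex of $P$ is in $R$ or is a descendant of a child of $x_0$ in $Z_j$. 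Hence $V(Q_{i,s})\cap V(P)=\{x_{i,s}\}$. Its last vertex is in $S$, and it is the only vertex of $Q_{i,s}$ in $S$ by the definition of an $A$--$B$ path.

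Now consider $Q_t$. It is a terminal subpath of $Q'_t$, so it ends at the tip of $Q'_t$, which lies in $S'_j\subseteq S$, and it contains no other vertex of $S$. Its first vertex $u_t$ is in $V(P)$. We must show $V(Q_t-u_t)\cap V(P)=\emptyset$. There are two cases.
\begin{itemize}
\item If $Q'_t$ meets $R$, then $V(Q_t-u_t)\subseteq A_j$ and avoids $R$ by the choice of $u_t$. It also avoids $P'$, since $Q'_t$ meets $P'$ only in its first vertex, which is not in $Q_t-u_t$ unless $u_t$ is that vertex. Since $P\subseteq R\cup P'$, this gives $V(Q_t-u_t)\cap V(P)=\emptyset$.
\item If $Q'_t$ avoids $R$, then $Q_t=Q'_t$ meets $P'$ only at $u_t$, and $P\subseteq R\cup P'$ again gives the claim.
\end{itemize}

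\textbf{Pairwise disjointness of the ribs.} The paths $Q_{i,s}$ are pairwise disjoint by choice, and the $Q_t$ are pairwise disjoint as subpaths of the disjoint $Q'_t$. It remains to show that $Q_{i,s}$ and $Q_t$ are disjoint.
\begin{itemize}
\item Apart from $x_{i,s}$, the path $Q_{i,s}$ lies in $A_i$, and apart from $u_t$, the path $Q_t$ lies in $A_j$. These are disjoint.
\item Suppose $x_{i,s}\in V(Q_t)$. Then $x_{i,s}=u_t$ or $x_{i,s}\in A_j$. The latter is impossible since $x_{i,s}\in V(R)$. So $u_t=x_{i,s}\in X_i\subseteq X'_j$. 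If $u_t$ is the first vertex of $Q'_t$, it is an attachment of $J'$ lying in $X'_j$, contradicting \ref{item:X-degree} for $J'$. Otherwise $u_t$ is an internal vertex of $Q'_t$ lying on $R$. I expect this case to be the main obstacle: we must show that $Q'_t$, which lives in $G-X'_j$, cannot pass through a vertex of $X'_j$. I would handle it by noting that $J'$ was produced by induction with graph $G$, but by its construction its ribs should lie in $G-X'_j$. If this is not automatic from the statement, I would use \ref{item:X-spine}: the vertex $x_{i,s}\in X'_j$ lies on the spine $P'$, and $Q'_t$ meets $P'$ only at its attachment, so $x_{i,s}$ would have to be the attachment of $Q'_t$, which is excluded by \ref{item:X-degree}.
\item Symmetrically, suppose $u_t\in V(Q_{i,s})$. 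Since $u_t\notin A_i$ (it lies in $R$ or $A_j$), this forces $u_t=x_{i,s}$, which is handled by the previous case.
\end{itemize}
Together these give \ref{disjoint_ribs}, so $J$ is a $(G,S,T)$-path of order $\ell$.
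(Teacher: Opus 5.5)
Your proof is correct and is essentially the paper's argument. Both use $V(P)\subseteq V(R)\cup V(P')\subseteq V(R)\cup A_j$ together with $V(Q_{i,s})\subseteq\{x_{i,s}\}\cup A_i$ and $V(Q_t)\subseteq\{u_t\}\cup A_j$ to obtain the $V(P)$--$S$ property and disjointness. Of your two suggested ways to handle the case $u_t=x_{i,s}$, only the second is available from the induction hypothesis, and it is exactly right: by \ref{item:X-spine} and \ref{item:X-degree} for $J'$, each $Q'_t$ meets $V(P')\supseteq X'_j$ only at an attachment not in $X'_j$, so $V(Q'_t)\cap X'_j=\emptyset$. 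This is the fact the paper uses without stating it when it writes $V(Q_t)\subseteq (V(R)\cup A_j)\setminus X'_j$.
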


	\begin{clmproof}
		Clearly $J$ satisfies \ref{vertical_spine}, since $P=P_T(u)$.  It is also clear that $J$ satisfies \ref{coccyx_attachment} because $u$ is the first vertex of some rib by definition.  To show that $J$ satisfies \ref{disjoint_ribs} we must show that
		\begin{enumerate*}[label=(\arabic*)]
			\item\label{item:vertex-disjoint} the ribs of $J$ are pairwise vertex-disjoint; and
			\item\label{item:p-s-path} each rib of $J$ is a $V(P)$--$S$ path in $G$.
		\end{enumerate*}
		To show \ref{item:vertex-disjoint}, first recall that $Q'_1,\dots,Q'_{\ell-a_i}$---and therefore $Q_1,\ldots,Q_{\ell-a_i}$---are pairwise vertex disjoint and that $Q_{i,1},\dots,Q_{i,a_i}$ are pairwise vertex disjoint.
		Now, let $s \in \{1,\dots,a_i\}$ and $t \in \{1,\dots,\ell-a_i\}$.
		The paths $Q_{i,s}$ and $Q_{t}$ are vertex disjoint since $V(Q_t) \subseteq (V(R) \cup A_j) \setminus X_j'$ and $V(Q_{i,s}) \subseteq X_i \cup A_i \subseteq X_j' \cup A_i$.  Thus, the ribs of $J$ are pairwise vertex-disjoint.

		All that remains is to show \ref{item:p-s-path}; that each rib of $J$ is a $V(P)$--$S$ path in $G$.
		By construction, each single vertex rib is a vertex in $V(P)\cap S$, and for each rib with at least two vertices, the first vertex is in $V(P)\setminus S$ and the last vertex is in $S\setminus V(P)$.
		What remains is to argue that no rib has an internal vertex in $V(P)\cup S$.

		For each $s\in\{1,\ldots,a_i\}$, the rib $Q_{i,s}$ is a $(V(R)\setminus X)$--$(S\cap A_i)$ path in $G_i$.  Since $G_i$ has no vertices in $X$, $Q_{i,s}$ is a $V(R)$--$(S\cap A_i)$ path in $G$.  Therefore, $Q_{i,s}$ has no internal vertex in $V(R)$ or in $S\cap A_i$.  Since $V(G_i)\subseteq A_i\cup V(R)$, every internal vertex of $Q_{i,s}$ is in $A_i$. Therefore, $Q_{i,s}$ has no internal vertex in $S$.  Since $A_i\cap A_j=\emptyset$ and $V(P)\subseteq V(R)\cup V(P')\subseteq V(R)\cup A_j$,
		no internal vertex of $Q_{i,s}$ is in $V(P)$.  Thus, $Q_{i,s}$ has no internal vertex in $V(P)\cup S$, as required.

		Next, consider a rib $Q_t$, for some $t\in\{1,\ldots,\ell-a_i\}$.
		Then $Q_t$ is the suffix of $Q'_t$ that begins at $u_t$.
		Recall that $V(Q_t-u_t) \subset A_j$.
		Thus, $Q_t$ has no internal vertex in $V(R)$ or $S \cap A_i$.
		Since $Q'_t$ is a $V(P')$--$(S\cap A_j)$ path, $Q'_t$---and therefore $Q_t$--- has no internal vertex in $V(P')$ or $S\cap A_j$.
		Finally, since $V(P)\subseteq V(R)\cup V(P')$ and $S \subset A_i \cup A_j$, $Q_t$ has no internal vertex in $V(P) \cup S$, as required.
	\end{clmproof}

	\begin{clm}\label{clm:requirements}
		$J$ satisfies \ref{item:X-spine} and \ref{item:X-degree}.
	\end{clm}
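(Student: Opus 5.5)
We check the two requirements for the $J$ built above from $J'$ and the paths $Q_{i,1},\dots,Q_{i,a_i}$. The attachments of $J$ are $x_{i,1},\dots,x_{i,a_i},u_1,\dots,u_{\ell-a_i}$, and the spine is $P=P_T(u)$, where $u$ is the deepest of these attachments. Both requirements are then read off from the corresponding properties of $J'$ with respect to $X'_j=X\cup X_i$, together with the definition of the $x_{i,s}$.

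\emph{\ref{item:X-degree}.} We show that no attachment lies in $X$.
\begin{itemize}
\item Each $x_{i,s}$ is the first vertex of a $(V(R)\setminus X)$--$(S\cap A_i)$ path in $G_i$. Hence $x_{i,s}\notin X$.
\item For $u_t$ there are two cases.
\begin{itemize}
\item If $Q'_t$ is disjoint from $R$, then $u_t$ is the first vertex of $Q'_t$, that is, an attachment of $J'$. By \ref{item:X-degree} for $J'$ and $X'_j\supseteq X$, we get $u_t\notin X$.
\item Otherwise $u_t$ is a vertex of $Q'_t$ lying in $V(R)$. Since $J'$ satisfies \ref{item:X-spine} for $X'_j$, every vertex of $X\subseteq X'_j$ lies on $P'$. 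A rib $Q'_t$ meets $P'$ only in its first vertex, which by \ref{item:X-degree} is not in $X'_j$. So $Q'_t$ contains no vertex of $X$ at all, and in particular $u_t\notin X$.
\end{itemize}
\end{itemize}
This establishes \ref{item:X-degree}.

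\emph{\ref{item:X-spine}.} We must show $X\subseteq V(P)$. Since $X\subseteq V(R)$ by \ref{item:X-over-S}, it suffices to show $V(P)\supseteq V(R)$, or at least that $P$ reaches down to every vertex of $X$. Suppose first that some attachment of $J$ lies in $A_i\cup A_j$, or equals $x_0$. Then $u$ is a descendant of $x_0$, so $P\supseteq R\supseteq X$.

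Otherwise all attachments lie on $R$ strictly above $x_0$, and we argue via $J'$ and $X_i$.
\begin{itemize}
\item Every vertex of $X_i$ is an attachment of $J$, hence lies on $P$. So the deepest attachment $u$ is at least as deep as every vertex of $X_i$.
\item By \ref{item:X-spine} for $J'$, every vertex of $X'_j=X\cup X_i$ lies on $P'=P_T(u')$. Also, by \ref{coccyx_attachment} for $J'$, $u'$ is the first vertex of some rib $Q'_{t_0}$.
\item If $u'\in V(R)$, then $Q'_{t_0}$ meets $R$, and $u_{t_0}$ is its last vertex in $R$. This vertex is either $u'$ itself, or lies deeper on $R$ than $u'$, since $Q'_{t_0}-u'$ avoids $P'\supseteq$ everything above $u'$. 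So $u$ is at least as deep as $u'$, and $P\supseteq P'\supseteq X$.
\item If $u'\in A_j$, then $R\subseteq P'$. Each rib starting above or at $x_0$ is treated as in the previous item; some rib then has its $u_t$ at depth at least $x_0$, or lies in $A_j$, which is the first case.
\end{itemize}

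The main obstacle is exactly this bookkeeping. We must show that the deepest new attachment is not shallower than the deepest vertex of $X$. The key fact to exploit is that the coccyx $u'$ of $J'$ lies below all of $X'_j$, together with the fact that its rib's last vertex on $R$ is at least as deep as $u'$. A vertex of $R$ above $u'$ lies on $P'$, so it cannot be an internal vertex of that rib. With both requirements verified, \cref{clm:requirements} follows.
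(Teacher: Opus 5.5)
Your proof is correct and follows the paper's argument. For (ii), you check the attachments type by type against (i) and (ii) for $J'$; your observation that $V(Q'_t)\cap X\subseteq V(Q'_t)\cap V(P')=\{u'_t\}$ is a tidier form of the paper's internal/last-vertex split. For (i), your key step is the paper's key step: the rib of $J'$ starting at the coccyx $u'$ yields an attachment $u_{t_0}$ of $J$ that is a descendant of $u'$, so $P'\subseteq P$. The paper shows uniformly that $u_t$ descends from $u'_t$ for every $t$, which makes your case split on attachments at or below $x_0$ unnecessary and fixes your vague last bullet: when $u'\in A_j$, the rib at $u'$ misses $R\subseteq P'$, so $u_{t_0}=u'$.
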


	\begin{clmproof}
		For each $t \in \{1,\dots,\ell-a_i\}$, considering the attachment $u'_t$ of $Q'_t$ in $J'$, we see that $J$ has an attachment $u_t$ that is a descendant of $u'_t$.  This includes the vertex $u'$ that defines the spine $P'=P_T(u')$ of $J'$.  Therefore, the vertex $u$ that defines the spine $P=P_T(u)$ of $J$ is a descendant of $u'$, so $P'$ is a prefix of $P$.  Since $J'$ satisfies \ref{item:X-spine} for $X'_j$ and $X \subset X_j'$, every vertex of $X$ is in $P'$ and therefore also in $P$.  Thus, $J$ satisfies \ref{item:X-spine}.

		To see that~\ref{item:X-degree} holds, observe that each attachment $v$ of $J$ is one of the following three types:
		\begin{itemize}[nosep,nolistsep]
			\item $v=x_{i,s}$ for some $s\in\{1,\ldots,a_i\}$. By definition, $v$ is the first vertex of the $(V(R)\setminus X)$--$(S\cap A_i)$ path $Q_{i,s}$, so $v\in V(R)\setminus X$. In particular, $v\not\in X$.

			\item $v=u_t$ for some $t\in\{1,\ldots,\ell-a_i\}$ and $u_t=u'_t$ is an attachment in $J'$. By the inductive hypothesis, $u_t\not\in X_j'$ and $X \subset X_j'$. In particular, $u_t = v\not\in X$.

			\item $v=u_t$ for some $t\in\{1,\ldots,\ell-a_i\}$ and $u_t\neq u'_t$. By \ref{disjoint_ribs}, $Q'_t$ is a $V(P')$--$S'_j$ path and by the inductive hypothesis, $X_j' \subset V(P')$.
			      By definition, $u_t$ is a vertex of $Q'_t-u'_t$, thus, $u_t\not\in V(P')$ and since $X \subseteq X'_j$, we have $v=u_t\not\in X$.
			      By definition, $u_t$ is a vertex of $Q'_t-u'_t$.  If $u_t$ is an internal vertex of $Q'_t$ then $u_t\not\in V(P')\supseteq X'_j\supseteq X$.
			      If $u_t$ is the last vertex of $Q'_t$ then $u_t\in S\subseteq V(G)\setminus X$.  In either case $v=u_t\not\in X$.
			      \qedhere
		\end{itemize}
	\end{clmproof}

	By \cref{clm:gst,clm:requirements}, $J$ is a $(G,S,T)$-path of order $\ell$ satisfying~\ref{item:first} and \ref{item:last}.  We have just shown that $\td(G-X_j',S_j')\ge 2(\ell-a_i)-1$ implies the existence of a $(G,S,T)$-path of order $\ell$ that satisfies \ref{item:first} and \ref{item:last}.

	All that remains is to consider the case in which
	$\td(G - X_1',S_1') < 2(\ell-a_2)-1$ and $\td(G - X_2',S_2') < 2(\ell-a_1)-1$.
	We claim that this cannot happen.
	Suppose, for the sake of contradiction, that
	$\td(G - X_1',S_1') < 2(\ell-a_2)-1$ and $\td(G - X_2',S_2') < 2(\ell-a_1)-1$.  We will show that $\td(G-X,S)<2\ell-1$, thereby contradicting \ref{item:td-G-X-Y}.
	Recall that $1\leq a_1 < \ell$ and $1\leq a_2 < \ell$.
	Without loss of generality, assume that $a_1 \leq a_2$. Recall that $|X_1 \cup Y_1| \leq 2a_1$.
	Therefore,
	\[  \td(G-X,S) = \td(C,S\cap V(C)) \leq \td(C - (X_1 \cup Y_1),(S\cap V(C))\setminus\{X_1\cup Y_1\}) + 2a_1.  \]
	Our aim now is show that $\td(D,S\cap V(D)) < 2(\ell-a_1)-1$ holds for every connected component $D$ of $C - (X_1 \cup Y_1)$.
	This implies that $\td(G-X,S) = \td(C,S\cap V(C)) < 2\ell-1$, which is the desired contradiction.
	Let $D$ be a connected component of $C - (X_1 \cup Y_1)$.

	If $S\cap V(D)$ is empty, then $\td(D,S\cap V(D))=0$ and we are done, so we assume that $S\cap V(D)$ is non-empty.  Recall that $V(R)$ separates $A_1$ from $A_2$ in $G$.
	Moreover, $Y_1$ separates $S \cap A_1$ from $V(R) \setminus X$ in $G_1$.
	It follows that $Y_1 \cup X$ separates $S \cap A_1$ from $S \cap A_2$ in $G$.
	In particular, $D$ intersects $S \cap A_1$ or $D$ intersects $S \cap A_2$, but not both.
	If $D$ intersects $S\cap A_1$, then $V(D) \subseteq A_1$, and since $X_1' \cap A_1 = \emptyset$, $D$ is a subgraph of $G - X_1'$.
	Additionally, $S \cap V(D) \subseteq S_1' = S \cap A_1$, and so,
	\[\td(D,S \cap V(D)) \leq \td(G - X_1',S_1') < 2(\ell-a_2)-1 \leq 2(\ell-a_1)-1.\]
	Finally, assume that $D$ intersects $S\cap A_2$.
	Since $X_2'  \subseteq X  \cup X_1 \cup Y_1$, $D$ is a subgraph of $G - X_2'$.  Moreover, $S \cap V(D) \subseteq S_2' = S \cap A_2$, and so,
	\[\td(D,S \cap V(D)) \leq \td(G - X_2',S_2') < 2(\ell-a_1)-1.\]
	Therefore, $\td(G-X,S) = \td(C,S\cap V(C)) < 2\ell-1$, contradicting~\ref{item:td-G-X-Y} and completing the proof.
\end{proof}

\section{Excluding a fan}\label{sec:ltd}

Note that as shown in~\cite{quickly-excluding-apex-forest},~\Cref{thm:main} implies a weaker version of~\Cref{thm:ltd}, which already substantially improves~\cite[Theorem~2]{quickly-excluding-apex-forest}.
Namely, for every apex-linear forest graph $H$ with at least three vertices, every $H$-minor-free graph $G$ has $\ltd(G) < 2|V(H)|-1$.
By specializing the argument used in the proof of~\Cref{lem:main}, we remove the multiplicative factor of $2$ and prove~\Cref{thm:ltd}.
In fact, we prove the slightly stronger statement of~\Cref{thm:ltd-stronger}, which allows us to also obtain \cref{cor:td-diam}, as explained in \cref{sec:radius}.

Let $G$ be a connected graph, let $S \subseteq V(G)$, and let $T$ be a DFS tree of $G$.
A \defin{weak $(G,S,T)$-path}  of \defin{order} $\ell$ is a pair $J:=(P,\{u_1,\ldots,u_\ell\})$ that satisfies the following conditions:


\begin{enumerate}[nosep,nolistsep,label=(W\arabic*)]
	\item\label{vertical_spine_weak} $P:=P_T(u)$ for some vertex $u$ of~$G$. $P$ is called the \defin{spine} of $J$.
	\item\label{attachment_weak}
	$\{u_1,\ldots,u_\ell\}\subseteq V(P)$ and for each $s\in\{1,\ldots,\ell\}$, $u_s\in S$ or $u_s$ is adjacent, in $G$, to a connected component of $G-V(P)$ that contains a vertex of $S$.  Each vertex in $u_1,\ldots,u_\ell$ is called an \defin{attachment} of $J$.
\end{enumerate}



\begin{obs}\label{obs:fan}
	Let $G$ be a graph, let $u \in V(G)$, let $G' := G - u$ and let $S := N_{G}(u)$.
	If there exist a connected component $C'$ of $G'$ and a DFS tree $T'$ of $C'$ such that $C'$ contains a weak $(C', S\cap V(C'), T')$-path of order $\ell$, then $G$ contains $F_{\ell}$ as a minor.
\end{obs}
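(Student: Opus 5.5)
We build an explicit model of the fan $F_\ell$ in $G$. Here $F_\ell$ is understood as a path on $\ell$ vertices together with a dominant vertex. The dominant vertex will be represented by $u$, and the path by branch sets that are consecutive pieces of a subpath of the spine, each enlarged so that it reaches a neighbour of $u$.

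Fix the weak $(C',S\cap V(C'),T')$-path $J=(P,\{u_1,\ldots,u_\ell\})$ with $P=P_{T'}(w)$, and order the attachments along $P$ so that $u_1,\ldots,u_\ell$ appear in this order from the root towards $w$. For each $s$ we choose a connected set $D_s$ that contains a vertex of $S$ and meets $P$ only in $u_s$, and we make the $D_s$ pairwise disjoint. If $u_s\in S$, we let $D_s:=\{u_s\}$. Otherwise, by \ref{attachment_weak}, $u_s$ is adjacent to a component $K_s$ of $C'-V(P)$ that contains a vertex of $S$, and we let $D_s:=\{u_s\}\cup V(K_s)$.

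The main obstacle is disjointness: two attachments could be adjacent to the same component $K$. The DFS property resolves this. Every component $K$ of $C'-V(P)$ is contained in a subtree $T'_x$ whose parent is some $p\in V(P)$. All edges leaving $T'_x$ go to ancestors of $x$, and these lie on $P$. So the neighbours of $K$ on $P$ are ancestors of $p$, and their deepest member is $p$ itself, which is adjacent to $x\in K$. We exploit this by re-choosing components greedily from the bottom of $P$. Note first that $u_\ell$ cannot use a component hanging below a deeper attachment, because any component adjacent to $u_\ell$ hangs at or below $u_\ell$. In general, if several attachments are adjacent to the same component $K$ hanging at $p$, then all of them are ancestors of $p$. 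In that case we can shrink the collection: we keep only the lowest such attachment in the role of using $K$ and shorten the spine, giving up the others. This would lose attachments, so the argument must instead be organised differently.

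To avoid this loss, we use contraction instead of disjoint components. Contract each component $K$ of $C'-V(P)$ that contains an $S$-vertex into its deepest $P$-neighbour $p_K$. The result is a path in which a vertex is \emph{marked} if it lies in $S$ or absorbed some such $K$. This gives only as many marked vertices as there are distinct $p_K$'s, and that may be fewer than $\ell$. So before the construction we will argue that $J$ can be normalised: we choose a weak path of order $\ell$ whose attachments each either lie in $S$ or are the deepest neighbour of some $S$-containing component. To obtain this we replace an attachment $u_s$ by the deepest neighbour $p$ of its component. Then $P$ is extended if needed by the path from $p$ into $K$ down to $S$: the vertices of $P_{T'}(p)$ remain on the spine, and the extension only removes part of $K$, which is acceptable once the counting is rechecked. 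We expect this normalisation to be the delicate step.

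Once the marked vertices $m_1,\ldots,m_\ell$ lie in order on the path $P$ and carry disjoint connected sets $D_s$ touching $S$, we cut $P$ into $\ell$ consecutive subpaths $B_1,\ldots,B_\ell$ with $m_s\in B_s$. We take the branch set of the $s$-th path vertex to be $B_s\cup D_s$, and the branch set of the dominant vertex to be $\{u\}$. Consecutive branch sets are adjacent along $P$. Each branch set contains a vertex of $S=N_G(u)$, so it is adjacent to $u$. Hence these branch sets form a model of $F_\ell$ in $G$.
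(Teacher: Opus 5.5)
There is a genuine gap, and it is in the overall design, not in the normalisation details. You fix $\{u\}$ as the branch set of the dominant vertex. That forces $u$ to have neighbours in $\ell$ distinct path branch sets, so you need $\ell$ pairwise disjoint connected sets $D_s$, each containing a vertex of $S$. The hypothesis does not supply these.

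Here is a counterexample. Take a path $p_1p_2\cdots p_\ell$, add a vertex $s$ adjacent to every $p_i$, and add a vertex $u$ adjacent only to $s$. Then $C'=G-u$, and the path $p_1,\dots,p_\ell,s$ rooted at $p_1$ is a DFS tree $T'$ of $C'$. We have $S=\{s\}$, and $(P_{T'}(p_\ell),\{p_1,\dots,p_\ell\})$ is a weak path of order $\ell$, since every $p_i$ is adjacent to the component $\{s\}$ of $C'-V(P)$. But $u$ has degree $1$, so for $\ell\ge 2$ no model of $F_\ell$ can use $\{u\}$ as the branch set of the dominant vertex. The normalisation step you flagged as delicate is therefore impossible here: all attachments share one component, and only one vertex of $S$ exists.

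The missing idea is to put the $S$-containing components on $u$'s side rather than on the path's side. Your contraction paragraph was close: you contracted each such component into the path, but you should contract it into $u$. Let $M$ consist of $u$ together with all components of $C'-V(P)$ that contain a vertex of $S$. (The paper simply takes the component of $G-V(P)$ containing $u$.)

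This set $M$ has the properties you need:
\begin{itemize}
\item it is connected, because each such component contains a neighbour of $u$;
\item it is disjoint from $V(P)$;
\item by \ref{attachment_weak}, every attachment $u_s$ is adjacent to $M$, either directly to $u$ when $u_s\in S$, or to one of the absorbed components.
\end{itemize}

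Now cut $P$ into $\ell$ consecutive subpaths, each containing exactly one attachment. Use these subpaths as the branch sets of the path vertices and $M$ as the branch set of the dominant vertex. Shared components then cause no conflict, no disjoint sets $D_s$ are needed, and the DFS property of $T'$ plays no role. This is exactly the paper's argument.
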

\begin{proof}
	Suppose that there exist a connected component $C'$ of $G'$ and a DFS tree $T'$ of $C'$ such that $C'$ contains a weak $(C', S\cap V(C'), T')$-path $(P,\{u_1,\dots,u_\ell\})$ of order $\ell$.
	Let $C$ be the connected component of $G - V(P)$ that contains $u$.  In the graph $G$, contract $C$ into a single vertex.
	In the resulting graph (which is a minor of $G$), $P$ is a path containing each of $u_1,\ldots,u_\ell$.
	Moreover, each of $u_1,\ldots,u_\ell$ is adjacent to the vertex resulting from the contraction of $C$.
	Clearly, this graph contains $F_\ell$ as a minor, and hence so does $G$.
\end{proof}

\begin{lem} \label{lem:ltd}
	Let $\ell$ be a positive integer.
	Let $G$ be a connected graph, let $T$ be a DFS tree of $G$, let $S\subseteq V(G)$, and let $X\subseteq V(G)\setminus S$ be such that
	\begin{enumerate}[nosep,nolistsep,label=\rm(\alph*),ref={Assumption (\alph*)}]
		\item $x$ is a $T$-ancestor of $v$ for each $x\in X$ and $v\in S$; \label{item:X-over-S'}
		\item $\td(G-X, S)\ge \ell$. \label{item:td-G-X-Y'}
	\end{enumerate}
	Then $G$ contains a weak $(G,S,T)$-path $J$ of order $\ell$ such that
	\begin{enumerate}[nosep,nolistsep,label=\rm(\roman*),ref={Requirement (\roman*)}]
		\item every vertex of $X$ lies in the spine of $J$; \label{item:X-spine'}\label{item:first'}
		\item no vertex of $X$ is an attachment in $J$. \label{item:X-degree'} \label{item:last'}
	\end{enumerate}
\end{lem}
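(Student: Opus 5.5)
We prove \cref{lem:ltd} by induction on $\ell$, following the proof of \cref{lem:main} closely but using weak paths. Since attachments of a weak path only need to \emph{see} a component of $G-V(P)$ containing $S$, rather than carry disjoint ribs, the doubling of depth disappears.

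\textbf{Base case and the case $x_0\in S$.} For $\ell=1$, take $v\in S$ minimizing $|P_T(v)|$, and set $J=(P_T(v),\{v\})$. Assumption~(a) puts $X$ on the spine, and $v\notin X$ because $X\cap S=\emptyset$. For $\ell\ge2$, choose a component $C$ of $G-X$ with $\td(C,S\cap V(C))\ge\ell$, and let $x_0$ be the lowest common ancestor of $S\cap V(C)$. If $x_0\in S$, apply induction with $X'=X\cup\{x_0\}$, $S'=(S\cap V(C))\setminus\{x_0\}$ and $\ell-1$; this is valid since $\td(G-X',S')\ge\ell-1$. We obtain a weak path whose spine contains $x_0$ and which avoids $x_0$ as an attachment. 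Adding $x_0$ as an attachment (it lies in $S$) gives order $\ell$.

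\textbf{Case $x_0\notin S$.} Define $R=P_T(x_0)$, the child sets $Z_1,Z_2$, and $A_1,A_2$ as before. The role of Menger is now played by a single vertex. For $i\in\{1,2\}$, let $y_i$ be the \emph{deepest} vertex of $V(R)\setminus X$ adjacent to a component of $G[A_i]$ containing a vertex of $S$. Such a vertex exists by the connectivity argument of \cref{lem:main}: a path in $C$ from $A_j$ to $S\cap A_i$ passes through $V(R)\setminus X$, and the part after its last vertex there lies in one component of $G[A_i]$. Let $b_i$ be the number of vertices of $V(R)\setminus X$ with this adjacency property.
\begin{itemize}
\item If $b_i\ge\ell$, take $P=P_T(y_i)$ (deepest, so it contains all of them and $X$) with these $\ell$ vertices as attachments. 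Each one sees a component of $G[A_i]$, which is also a component of $G-V(P)$ since $V(R)$ separates it from the rest of the graph.
\item Otherwise, set $X_i$ to be these $b_i<\ell$ vertices, $X'_j=X\cup X_i$ and $S'_j=S\cap A_j$. If $\td(G-X'_j,S'_j)\ge\ell-b_i$, induction gives a weak path $J'$ of order $\ell-b_i$ with spine $P'\supseteq X'_j$ and attachments outside $X'_j$. As in \cref{lem:main}, $P'$ is comparable with $R$; since $X_i\subseteq V(P')$, in fact $P'\supseteq R\cap$(down to $y_i$). Combining the attachments of $J'$ with $X_i$, the result has order $\ell$ on spine $P'$.
\end{itemize}
The point to check is that the vertices of $X_i$ still see components of $G-V(P')$ containing $S$. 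This holds because $P'$ beyond $R$ lies in $A_j$, so components of $G[A_i]$ remain untouched. The old attachments keep their property because $J'$ was computed in $G$ itself with the same spine.

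\textbf{Contradiction case (the main obstacle).} Suppose $\td(G-X'_1,S'_1)<\ell-b_2$ and $\td(G-X'_2,S'_2)<\ell-b_1$, with $b_1\le b_2$. Delete the $b_1$ vertices of $X_1$ from $C$. In $C-X_1$, every component meets at most one of $S\cap A_1$ and $S\cap A_2$. Indeed, a component of $G[A_1]$ containing a vertex of $S$ only has neighbours on $R$ among $X\cup X_1$ (by the definition of $X_1$), so after deleting $X_1$ it is cut off from $A_2$. A component $D$ meeting $S\cap A_1$ is then inside $G-X'_1$, giving $\td<\ell-b_2\le\ell-b_1$. A component meeting $S\cap A_2$ is inside $G-X'_2$, giving $\td<\ell-b_1$. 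Hence $\td(C,S\cap V(C))<b_1+(\ell-b_1)=\ell$, a contradiction.

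The delicate step is verifying this separation claim precisely. Components of $G[A_1]$ without $S$ may still connect through $R$, but they contain no vertex of $S$, so they do not matter for the focused treedepth bound. This single-vertex deletion per attachment is exactly what saves the factor $2$.
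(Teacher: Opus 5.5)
Your argument follows the paper's proof step for step: the same base case and $x_0\in S$ case, and the same set $X_i$ of vertices of $R-X$ that see an $S$-containing component of $G[A_i]$ (your $b_i$ is the paper's $a_i$). It also uses the same recursion on $(X\cup X_i,\,S\cap A_j)$ and the same contradiction obtained by deleting $X_1$ when $a_1\le a_2$. Two steps, however, fail as written.

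\textbf{The spine in the case $b_i\ge\ell$.} Your spine $P_T(y_i)$ need not contain $X$, because $x_0$ itself can lie in $X$. Take $T$ to be the path $r,p,x_0$ with two leaves $c_1,c_2$ hanging from $x_0$. Let $G$ also contain the edges $c_kp$ and $c_kr$ for $k\in\{1,2\}$, and set $X=\{x_0\}$, $S=\{c_1,c_2\}$, $\ell=2$. Then $C=G-X$ is connected with $\td(C,S)=2$, $X_1=\{p,r\}$ and $y_1=p$, but $x_0\notin V(P_T(p))$. So Requirement~(i) fails, and Requirement~(i) is exactly what the combination step relies on one level up. When $x_0\notin X$ there is no problem, since then $x_0\in X_i$ and $P_T(y_i)=R$. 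Using $R$ as the spine, as the paper does, fixes this.

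\textbf{The combination step.} You assert that $P'$ beyond $R$ lies in $A_j$, implicitly via the argument of Lemma~\ref{lem:main}. That argument used (J3): the coccyx starts a rib that must reach $S'_j$ without returning to the spine. Weak paths carry no such condition, since (W1) lets the spine end anywhere. So the induction hypothesis as stated allows $P'$ to run into $A_i$ and cut the very components that the vertices of $X_i$ are supposed to see. The paper asserts this combination without comment, so it skips the same point.

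One repair goes as follows.
\begin{itemize}
\item $x_0\in X\cup X_i\subseteq V(P')$, so $V(R)\subseteq V(P')$.
\item No attachment $u$ of $J'$ lies in $A_i$. Indeed $u\notin S'_j\subseteq A_j$, and a component of $G-V(P')$ containing a neighbour of $u$ (necessarily in $A_i$) and a vertex of $S'_j$ would meet both $A_i$ and $A_j$ while avoiding $V(R)$.
\item Shortening a spine to a vertical subpath that still contains all attachments only enlarges the components of $G-V(P)$. So you may cut $P'$ at the deepest vertex of $X'_j\cup U'$, where $U'$ is the set of attachments of $J'$.
\end{itemize}
After that, $V(P')\cap A_i=\emptyset$ and every component of $G[A_i]$ is a subgraph of $G-V(P')$, so adding $X_i$ as attachments is valid. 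Alternatively, strengthen the induction hypothesis to say that the spine ends at an ancestor of a vertex of $S$; every case of the construction guarantees this.
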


\begin{proof}
	The proof is by induction on $\ell$.
	If $\ell=1$ then, since $\td(G-X,S)\ge 1$, $S$ is non-empty.  Let $u$ be a vertex of $S$ closest to the root of $T$ and let $P:=P_T(u)$. Then $J:=(P,\{u\})$ is a weak $(G,S,T)$-path of order $1$.
	By \ref{item:X-over-S'}, the spine of $J$ contains every vertex in $X$, so $J$ satisfies \ref{item:X-spine'}.
	Since $u\in S$ is the only attachment of $J$ and $X\subseteq V(G)\setminus S$, $J$ satisfies \ref{item:X-degree'}.  Thus $J$ is a weak $(G,S,T)$-path of order $1$ that satisfies \ref{item:first'} and \ref{item:last'}.  We now assume that $\ell\ge 2$.

	By the definition of $\td(\cdot,\cdot)$, there exists a connected component $C$ of $G-X$ such that $\td(C,S\cap V(C))=\td(G-X,S)\ge \ell$. Therefore, $|S\cap V(C)|\ge \ell >0$. Let $x_0$ be the lowest common ancestor of $S\cap V(C)$ in $T$.

	First, suppose that $x_0\in S$.
	Let $X' := X \cup \{x_0\}$ and $S' := (S\cap V(C)) \setminus \{x_0\}$.
	We claim that \ref{item:X-over-S'} and \ref{item:td-G-X-Y'} are satisfied for $X'$, $S'$, and $\ell' := \ell-1$.
	By the definition of $x_0$ and \ref{item:X-over-S'}, $x$ is an ancestor of $v$ for each $x \in X'$ and $v \in S'$, and by~\ref{item:td-G-X-Y'},
	\[\td(G-X',S') \ge \td(C-x_0,(S\cap V(C))\setminus\{x_0\}) \geq \td(C,S\cap V(C))-1 \geq \ell-1 = \ell'.\]
	Therefore, we can apply induction to  $X'$, $S'$, and $\ell'=\ell-1$ obtaining a weak $(G,S',T)$-path $J':=(P, \{u_1,\ldots,u_{\ell-1}\})$ in $G$ of order $\ell-1$ satisfying~\ref{item:first'} and \ref{item:last'} for $X'$.
	Since $x_0 \in X'$, $x_0$ is not an attachment in $J'$ (by \ref{item:last'}) and $x_0\in V(P')$ (by \ref{item:first'}).
	It follows that $J:=(P,\{u_1,\ldots,u_{\ell-1},x_0\})$ is a weak $(G,S,T)$-path in $G$ of order $\ell$ satisfying~\ref{item:first'} and \ref{item:last'} for $X$.


	Next, suppose that $x_0 \notin S$ and let $R := P_T(x_0)$.
	Note that $X \subseteq V(R)$ by~\ref{item:X-over-S'}.
	Let $Z$ be the set of all children $x$ of $x_0$ in $T$ such that $T_x\subseteq C$. Let $Z_0 \subseteq Z$ be the set of all the children $x$ of $x_0$ such that $S\cap V(C)$ intersects $V(T_x)$.  Since $x_0 \notin S$, it follows that $|Z_0| > 1$.
	Let $Z_1$ and $Z_2$ be any partition of $Z$ such that $Z_i \cap Z_0 \neq \emptyset$ for each $i \in \{1,2\}$.  For each $i\in\{1,2\}$, let $A_i := \bigcup_{x \in Z_i} V(T_{x})$, and let $G_i := G[(A_i \cup V(R)) \setminus X ]$.
	Observe that, since $T$ is a DFS tree, $V(R)$ separates $A_1$ and $A_2$ in $G$.

	Let $\{1,2\} = \{i,j\}$.
	Let $X_i$ be the set of all the vertices of $R - X$ that are adjacent in $G$ to a connected component of $G[A_i]$ that contains a vertex in $S$.
	Let $a_i:=|X_i|$.  We claim that $a_i\ge 1$. Indeed, since $A_j\cup A_i\subseteq V(C)$ and $C\subseteq G-X$, the graph $G-X$ contains an $A_j$--$(S\cap A_i)$ path $P$ and this path necessarily contains a vertex in $V(R)\setminus X$.  The last vertex of $P$ in $V(R)\setminus X$ is adjacent, in $G$, to the connected component of $G[A_i]$ that contains the vertex in $S\cap A_i\cap V(P)$.
	Note that if $a_i \geq \ell$, then $(R, X_i)$
	is a weak $(G,S,T)$-path of order $a_i \geq \ell$ that satisfies~\ref{item:X-spine'} and~\ref{item:X-degree'}, and we are done.
	Thus, we may assume that $a_i < \ell$.

	Let $X'_j := X \cup X_i$ and let $S'_j := S \cap A_j$.
	Assume that $\td(G - X'_j, S'_j) \geq \ell-a_i$.
	By the definition of $X_i$, $x$ is an ancestor of $v$ for each $x \in X'_j$ and $v \in S'_j$, so $X'_j$ and $S'_j$ satisfy \ref{item:X-over-S'}.
	Therefore, we can apply induction to $X'_j$, $S'_j$, and $\ell-a_i$, obtaining a weak $(G,S'_j,T)$-path $J':=(P',\{u'_1,\ldots,u'_{\ell-a_i}\})$ of order $\ell-a_i$ satisfying~\ref{item:first'} and \ref{item:last'} for $X'_j$.  Note that $X_i\subseteq V(P')$ (by \ref{item:first'}) and that $X_i$ and $\{u'_1,\ldots,u'_{\ell-a_i}\}$ are disjoint (by \ref{item:last'}).  It follows that $J:=(P',\{u'_1,\ldots,u'_{\ell-a_i}\}\cup X_i)$ is a weak $(G,S,T)$-path of order $\ell$ that satisfies~\ref{item:X-spine'} and~\ref{item:X-degree'}.
	We have just shown that $\td(G-X_j',S_j')\ge \ell-a_i$ implies the existence of a weak $(G,S,T)$-path of order $\ell$.

	Now we will assume that
	$\td(G - X_1',S_1') < \ell-a_2$ and $\td(G - X_2',S_2') < \ell-a_1$ and show that this implies that $\td(G-X,S)<\ell$,
	thereby contradicting \ref{item:td-G-X-Y'}.
	Recall that $1 \leq a_1 < \ell$ and $1 \leq a_2 < \ell$.
	Without loss of generality, assume that $a_1 \leq a_2$.
	Since $|X_1| = a_1$,
	\[  \td(G-X,S) = \td(C, S\cap V(C)) \leq \td(C - X_1,(S\cap V(C))\setminus X_1) + a_1.  \]
	We want to show that $\td(D,S\cap V(D)) < \ell-a_1$ holds for every connected component $D$ of $C - X_1$, which implies $\td(C, S\cap V(C)) < \ell$, and thus $\td(G-X,S)<\ell$, as desired.
	Let $D$ be a connected component of $C - X_1$.
	If $S\cap V(D)$ is empty then $\td(D,S\cap V(D))=0$ and we are done, so we assume that $S\cap V(D)$ is non-empty.  Recall that $V(R)$ separates $A_1$ from $A_2$ in $G$.
	Moreover, $X_1$ separates $S \cap A_1$ from $V(R) \setminus X$ in $G$.
	It follows that $X_1 \cup X$ separates $S \cap A_1$ from $S \cap A_2$ in $G$.
	In particular, $D$ intersects $S \cap A_1$ or $D$ intersects $S \cap A_2$, but not both.
	If $D$ intersects $S\cap A_1$, then $V(D) \subseteq A_1$, and since $X_1' \cap A_1 = \emptyset$, $D$ is a subgraph of $G - X_1'$.
	Additionally, $V(D) \cap S \subseteq S_1' = S \cap A_1$, and so,
	\[\td(D,S \cap V(D)) \leq \td(G - X_1',S_1') < \ell-a_2 \leq \ell-a_1.\]
	Finally, assume that $D$ intersects $S\cap A_2$.
	Since $X_2' = X  \cup X_1$, $D$ is a subgraph of $G - X_2'$.
	Moreover, $S \cap V(D) = S_2' \cap V(D)$, and so,
	\[\td(D,S \cap V(D)) \leq  \td(G - X_2',S_2') < \ell-a_1.\]
	Therefore, $\td(G - X,S)=\td(C, S\cap V(C)) < \ell$, which is a contradiction with~\ref{item:td-G-X-Y'} that completes the proof.
\end{proof}

\Cref{lem:ltd} and~\Cref{obs:fan} imply the following.

\begin{cor}\label{cor:ltd}
	Let $G$ be a graph, let $u \in V(G)$, and let $H$ be an apex-linear forest on at least three vertices.
	If $\td(G - u, N_G(u)) \geq |V(H)|-1$, then $H$ is a minor of $G$.
\end{cor}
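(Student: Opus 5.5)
Set $k:=|V(H)|$ and $\ell:=k-1\ge 2$. The plan is to produce a fan $F_\ell$ as a minor of $G$ using \cref{lem:ltd} and \cref{obs:fan}, and then to check that every apex-linear forest on $k$ vertices is a minor of $F_{k-1}$. Here $F_\ell$ denotes the fan on $\ell+1$ vertices, namely a path on $\ell$ vertices together with a dominant vertex.

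\textbf{Step 1: reduce to one component.} Let $G':=G-u$ and $S:=N_G(u)$. By the definition of focused treedepth, if $S\ne\emptyset$ then $\td(G',S)$ equals the maximum of $\td(C',S\cap V(C'))$ over the components $C'$ of $G'$. So some component $C'$ satisfies $\td(C',S\cap V(C'))\ge \ell\ge 1$. If instead $S=\emptyset$, then $\td(G',S)=0<\ell$, which contradicts the hypothesis.

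\textbf{Step 2: apply \cref{lem:ltd}.} Fix a DFS tree $T'$ of the connected graph $C'$. Apply \cref{lem:ltd} to $C'$, $T'$, $S\cap V(C')$, $X=\emptyset$ and $\ell$. Assumption (a) holds vacuously, and Assumption (b) is exactly $\td(C',S\cap V(C'))\ge\ell$. This yields a weak $(C',S\cap V(C'),T')$-path of order $\ell$. Then \cref{obs:fan} gives that $G$ contains $F_\ell=F_{k-1}$ as a minor.

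\textbf{Step 3: $H$ is a minor of $F_{k-1}$.} Choose $a\in V(H)$ with $H-a$ a subgraph of a path. Then $H-a$ is a linear forest on $k-1$ vertices, that is, a disjoint union of paths. Concatenate these paths in any order to get a Hamiltonian path on the same $k-1$ vertices; it contains $H-a$ as a subgraph. Identify this path with the path of $F_{k-1}$ and $a$ with the dominant vertex. Every edge of $H$ at $a$ is then present, so $H$ is a subgraph of $F_{k-1}$, and hence $H$ is a minor of $G$ by transitivity of the minor relation.

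There is no real obstacle, since the work is done in \cref{lem:ltd}. The only points needing care are the bookkeeping in Steps 1 and 3: that $F_\ell$ has $\ell+1$ vertices, so we need $\ell=k-1$, and that a linear forest embeds as a subgraph of a spanning path on the same vertex set. The hypothesis that $H$ has at least three vertices only guarantees $\ell\ge 2$.
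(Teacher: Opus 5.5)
Your proof is correct and takes the route the paper intends. The paper only states that \cref{lem:ltd} and \cref{obs:fan} imply the corollary, and your argument spells that out: pass to a component $C'$ of $G-u$ attaining $\td(G-u,N_G(u))$, apply \cref{lem:ltd} with $X=\emptyset$ and $\ell=|V(H)|-1$, use \cref{obs:fan} to obtain the fan $F_{|V(H)|-1}$ as a minor, and observe that every apex-linear forest on $|V(H)|$ vertices is a subgraph of that fan.
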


The following lemma is a reformulation of a result in~\cite{quickly-excluding-apex-forest}.

\begin{lem}[\cite{quickly-excluding-apex-forest}, Lemma~17] \label{lem:tdgs-to-ltd}
	Let $\ell$ be an integer with $\ell \geq 2$, let $G$ be a connected graph, and let $v\in V(G)$.
	If for every $U \subseteq V(G)$ such that $G[U]$ is connected with $S = N_G(U)$, we have $\td(G-U,S) < \ell$, then there exists an elimination forest $F$ of $G$ and a layering $\mathcal{L}=(L_i \colon i \in \mathbb N)$ of $G$, such that $L_0=\{v\}$, and the width of $(F, \mathcal L)$ is strictly less than $\ell$.
\end{lem}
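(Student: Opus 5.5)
The plan is to take $\mathcal{L}$ to be the BFS layering from $v$, so $L_i$ is the set of vertices at distance exactly $i$ from $v$ (in particular $L_0=\{v\}$). We then build $F$ top-down, one layer at a time, and apply the hypothesis to a carefully chosen connected set $U$ at every step. The invariant is as follows. After processing layers $0,\dots,i-1$, every connected component $C$ of $G-B_{i-1}$ (where $B_{i-1}=L_0\cup\dots\cup L_{i-1}$) comes with a vertical path $P_C$ of the partial forest built so far. Every edge from $V(C)$ to $B_{i-1}$ ends on $P_C$, and $P_C$ contains fewer than $\ell$ vertices of each single layer. For $i=0$ there is nothing to check.

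\emph{Step for a component $C$.} Let $U$ be the vertex set of the connected component of $G-V(C)$ containing $v$. This set contains $B_{i-1}$, is connected, and satisfies $N_G(U)=N_G(V(C))$ together with $N_G(U)\cap V(C)=L_i\cap V(C)=:S$ (a BFS property). The hypothesis gives $\td(G-U,S)<\ell$, and $C$ is a component of $G-U$, so $(C,S)$ has an elimination forest $F_C$ of vertex-height less than $\ell$. The key reduction is to restrict $F_C$ to the layer $L_i$. Let $F'_C$ be the forest on $S$ in which $y$ is an ancestor of $x$ iff this holds in $F_C$. It has height less than $\ell$, and it is an elimination forest of $G[S]$, because the closure of $F_C$ contains $G[S]$. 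We hang $F'_C$ below the lowest vertex of $P_C$. This handles every edge from $S$ to $B_{i-1}$, by the invariant, and every edge inside $L_i\cap V(C)$. Since every vertex of a root path in the final forest lies in a single $F'_C$-block, such a path contains at most $\operatorname{height}(F'_C)<\ell$ vertices of $L_i$. Hence the width of $(F,\mathcal L)$ is less than $\ell$.

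\emph{Propagating the invariant.} Let $C'$ be a component of $G-B_i$ contained in $C$. We must show that $N_G(V(C'))\cap L_i$ lies on one root-to-leaf path of $F'_C$, and then let $P_{C'}$ be that path extended by $P_C$. I expect this to be the main obstacle: the restriction of $F_C$ need not preserve this property for arbitrary connected pieces, since neighbours of a piece can sit on incomparable branches. I would first try a second use of the hypothesis with the connected set $U'=V(G)\setminus(V(C')\cup W)$, where $W$ is everything separated from $v$ by $V(C')$. Then $N_G(U')\cap V(C')$ consists of the $L_{i+1}$-vertices of $C'$ adjacent to $L_i$. If that fails, I would instead choose $F_C$ more cleverly. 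Among all elimination forests of $(C,S)$ of height less than $\ell$, take one minimizing the number of non-$S$ vertices. I would then argue, using the property that every component of $C-V(F_C)$ has its neighbourhood on one root path, that every vertex of $V(F_C)\setminus S$ can be eliminated. After that, every component of $C-S$ (and hence every $C'$) sees only a vertical path of $F_C=F'_C$, which is exactly the invariant.

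Finally, $F$ is an elimination forest of $G$, because every edge lies within a layer or between consecutive layers, and both kinds of edges are handled above. This would complete the proof of \cref{lem:tdgs-to-ltd}.
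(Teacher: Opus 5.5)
There is a genuine gap at the step you flag as the main obstacle, and neither of your repairs closes it. In fact no choice of $F_C$ can rescue a scheme that uses BFS layers and hangs the forest for layer $i$ below the forests for earlier layers.

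\emph{Counterexample.} Take $G=K_{2,n}$ with $n\ge 3$, where $\{v,c\}$ is the side of size two, and take $\ell=3$. The hypothesis holds. If a connected $U$ meets $\{v,c\}$, then $G-U$ is a star or edgeless. Otherwise $U=\{s_j\}$, and $\td(G-U,\{v,c\})=2$. The BFS layers are $L_1=\{s_1,\dots,s_n\}$ and $L_2=\{c\}$.

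\emph{Second repair.} Any elimination forest of $(G-v,L_1)$ of height at most $2$ must contain $c$, since otherwise $N_G(c)=L_1$ would lie on one root path. So the claim that every non-$S$ vertex can be removed from $F_C$ is false.

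\emph{First repair.} In the BFS setting $G-V(C')$ is already connected. So $W=\emptyset$, and $U'$ is just the set used in the next round. Here it yields only $\td(\{c\},\{c\})=1$, which says nothing about how $N_G(V(C'))\cap L_i$ sits in $F'_C$.

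\emph{Why the scheme cannot work.} Your construction only produces forests in which every ancestor of a vertex lies in the same or an earlier BFS layer. In this example that forces $c$ to be a descendant of every $s_j$, so one root path carries $n\ge\ell$ vertices of $L_1$. A good forest must put $c$ \emph{above} the $s_j$, and restricting $F_C$ to $L_i$ discards exactly that information.

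\emph{The missing idea.} Let the focused forests define the layering instead of BFS.
\begin{enumerate}
\item Put $L_0=\{v\}$ and $B_i=L_0\cup\dots\cup L_i$.
\item For $i\ge 1$ and each component $C$ of $G-B_{i-1}$, let $S_C=N_G(V(G)\setminus V(C))$. Choose an elimination forest $F_C$ of $(C,S_C)$ of height less than $\ell$.
\item Put \emph{all} of $V(F_C)$ into $L_i$, including its non-$S_C$ vertices.
\item Hang $F_C$ below the deepest vertex of $N_G(V(C))$. This set lies on a root path of the parent forest, by the definition of an elimination forest of $(G,S)$.
\end{enumerate}
This is a layering. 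A neighbour $y\notin B_i$ of a vertex of $V(F_C)$ lies in a component $D$ of $C-V(F_C)$ and is adjacent to $B_i$, so $y\in S_D\subseteq L_{i+1}$. For the width, a root path meets $L_i$ inside a vertical path of a single $F_C$, giving fewer than $\ell$ vertices. $L_0$ contributes one vertex, which is where $\ell\ge 2$ is used.

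To reapply the hypothesis with $U=V(G)\setminus V(D)$, this set must be connected, which is no longer automatic. Choose $F_C$ with $|V(F_C)|$ minimum. Suppose a component $K$ of $C-V(D)$ missed $S_C$. Then $K$ would contain a vertex of $N_G(V(D))\subseteq V(F_C)$, and deleting $V(K)\cap V(F_C)$ from $F_C$ would give a smaller valid forest, a contradiction.

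In the example this gives $L_1=\{c,s_1,\dots,s_n\}$, with $c$ a child of $v$ and each $s_j$ a child of $c$.
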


\begin{proof}[Proof of~\Cref{thm:ltd-stronger}]
	Let $H$ be an apex-linear forest with at least three vertices and let $G$ be an $H$-minor-free graph.
	Without loss of generality, we may assume that $G$ is connected.
	Let $\ell := |V(H)| - 1$.
	We will apply~\Cref{lem:tdgs-to-ltd}.
	To check its assumption, let $U \subseteq V(G)$ be such that $G[U]$ is connected, and let $S := N_G(U)$.
	Let $G'$ be obtained by contracting $U$ in $G$ to a vertex $u$.
	Note that $G'-u$ is the same graph as $G - U$.
	By~\Cref{cor:ltd} applied to $G'$ and $u$, we obtain that either $\td(G'-u,S) < \ell$ or $G$ contains $H$ as a minor.
	Since $G'$ is a minor of $G$ and $G$ is $H$-minor-free, the latter is false.
	This indeed verifies the assumption of~\Cref{lem:tdgs-to-ltd}, and the result follows.
\end{proof}

\section{Excluding an apex-forest}\label{sec:lpw}

In this section, we prove \Cref{thm:lpw-stronger}, which implies \Cref{thm:lpw}, i.e.\ for every apex-forest $H$ with at least three vertices, and for every $H$-minor-free graph $G$, $\lpw(G) \leq |V(H)| - 2$.
Our proof is based on the material of~\cite[Section~4]{quickly-excluding-apex-forest}, which builds upon the ideas of Diestel~\cite{Diestel1995} and follows the notation of a recent paper by Seymour~\cite{seymour2023shorter}.

A \defin{separation} of a graph $G$ is a pair $(A, B)$ of subgraphs of $G$ such that $A\cup B = G$ and $E(A \cap B) = \emptyset$; the \defin{order} of $(A,B)$ is the value $|V(A)\cap V(B)|$.
Let $w$ be a positive integer.
A separation $(A,B)$ is \defin{$(w,S)$-good} if it is of order at most $w$ and $(A,S\cap V(A))$ has a path decomposition of width at most $w-1$ whose last bag contains $V(A) \cap V(B)$ as a subset.
Note that this definition differs from the one of~\cite{quickly-excluding-apex-forest} where it was required that the path decomposition is of width at most $2w-2$.
When $(A,B)$, $(A',B')$ are separations of $G$, we write \defin{$(A,B) \leq (A',B')$}, if $A \subseteq A'$ and $B \supseteq B'$.
If moreover
the order of $(A',B')$ is at most the order of $(A,B)$,
then we say that $(A',B')$ \defin{extends} $(A,B)$.
A separation $(A,B)$ in $G$ is \defin{maximal with a property $\mathcal{P}$} if it satisfies $\mathcal{P}$ and for every separation $(A',B')$ satisfying $\mathcal{P}$ and extending $(A,B)$, we have $A'=A$ and $B'=B$.
Below, we state and prove two basic properties of maximal $(w,S)$-good separations.

\begin{obs}
	Let $w$ be a positive integer, let $G$ be a connected graph, let $S \subseteq V(G)$, and let $(A,B)$ be a separation of $G$.
	If either $(A,B)$ is maximal $(w,S)$-good or $(A,B)$ is maximal with the property of being $(w,S)$-good and satisfying $S \setminus V(A) \neq \emptyset$, then
	\begin{enumerate}[nosep,nolistsep,label=\rm(\roman*),ref={(\roman*)}]
		\item every connected component of $G-V(A)$ intersects $S$, \label{item:component-intersects-S}
		\item every vertex in $V(A)\cap V(B)$ has a neighbor in $V(B)\setminus V(A)$, \label{item:vertex-in-separator-sees-B}
		\item for every $z \in V(A)\cap V(B)$, there is a path from $z$ to a vertex of $S \setminus V(A)$ such that all internal vertices of this path are in $V(B) \setminus V(A)$.\label{item:path-z-to-S}
	\end{enumerate}
	\label{obs:finding_sep}
\end{obs}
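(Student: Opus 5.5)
Each of the three items is proved by contradiction with maximality. Assuming an item fails, we build a separation $(A',B')$ that extends $(A,B)$, is still $(w,S)$-good, and differs from $(A,B)$. If $(A,B)$ is only maximal among separations with $S\setminus V(A)\neq\emptyset$, we also check that $(A',B')$ keeps $S\setminus V(A')\neq\emptyset$. In every construction $A'$ is $A$ together with some vertices of $V(B)\setminus V(A)$, and the separator does not grow.

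For \ref{item:component-intersects-S}, let $K$ be a component of $G-V(A)$ with $V(K)\cap S=\emptyset$. Set $A' := G[V(A)\cup V(K)]$ and $B' := G - V(K)$, removing the edges of $A'$ from $B'$ so that $E(A'\cap B')=\emptyset$. Then $V(A')\cap V(B')\subseteq V(A)\cap V(B)$, so the order does not increase. For the path decomposition of $(A',S\cap V(A'))$ we keep the same $J$ and the same decomposition $\mathcal W$. The set $S\cap V(A')$ equals $S\cap V(A)$, so it is still covered by $J$. The new components of $A'-V(J)$ are the old ones plus possibly components containing $K$. Here we use $N_G(V(K))\subseteq V(A)\cap V(B)$, which lies in the last bag. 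A component of $A'-V(J)$ merging $K$ with old components has neighbourhood inside $N(\text{old})\cup(N(K)\cap V(J))$. This is not automatically in one bag. To fix this, I would pick $J$ and $\mathcal W$ by adding $V(A)\cap V(B)$ to $J$; this is legitimate since the last bag already contains those vertices, so $\mathcal W$ remains a path decomposition of the enlarged induced subgraph. A cleaner route is to treat $K$ as its own component outside $J$: since $K$ is separated from $V(A)\setminus V(B)$ by $V(A)\cap V(B)\subseteq V(J)$, the components of $A'-V(J)$ are exactly the old ones plus the components of $K$. Each component of $K$ has neighbourhood within $V(A)\cap V(B)$, contained in the last bag. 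The width is unchanged, and $S\setminus V(A')=S\setminus V(A)$, so the extra property is preserved. Since $A'\neq A$, this contradicts maximality.

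For \ref{item:vertex-in-separator-sees-B}, let $z\in V(A)\cap V(B)$ have no neighbour in $V(B)\setminus V(A)$. Take $A':=A$ plus all edges of $B$ at $z$ (these go into the separator), and $B':=B-z$. Then $(A',B')$ is a separation of order one less, with the same decomposition, because every new edge joins two vertices of the last bag. The new separator is a subset of the old one, so it is still contained in the last bag, and $B'\neq B$. This contradicts maximality.

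For \ref{item:path-z-to-S}, take $z$ and consider the components of $G-V(A)$ adjacent to $z$. There is at least one by \ref{item:vertex-in-separator-sees-B}, and each contains a vertex of $S$ by \ref{item:component-intersects-S}. Routing through such a component gives the required path. The main obstacle is the bookkeeping in \ref{item:component-intersects-S}, namely checking the focused-decomposition component condition, which I handle by keeping $J$ unchanged so that $K$ becomes new outside components.
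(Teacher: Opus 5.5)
Your proof is correct and is essentially the paper's argument: for (i) you absorb an $S$-free component of $G-V(A)$ into $A$, for (ii) you move the $B$-edges at the offending separator vertex into $A$ and delete that vertex from $B$, each time contradicting maximality, and you then derive (iii) from (i) and (ii). One slip to fix. In (i), $B'$ must be $B-V(K)$ minus the edges of $A'$, not $G-V(K)$: the latter contains all of $V(A)$, so the separator becomes $V(A)$ and $B'\subseteq B$ fails in general. Also, your worry about a component of $A'-V(J)$ merging $K$ with old components is moot, because $V(A)\cap V(B)\subseteq W_m\subseteq V(J)$ already holds, as your ``cleaner route'' correctly notes.
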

\begin{proof}
	Note that if a separation $(A,B)$ of $G$ satisfies \ref{item:component-intersects-S} and \ref{item:vertex-in-separator-sees-B}, then $(A,B)$ satisfies also \ref{item:path-z-to-S}.
	Let $(A,B)$ be a $(w,S)$-good separation of $G$.
	We will prove that if either of \ref{item:component-intersects-S} and \ref{item:vertex-in-separator-sees-B} fails, then there is a separation $(A',B')$ of $G$ such that $(A',B')$ is $(w,S)$-good, $(A',B')$ extends $(A,B)$, $(A',B') \neq (A,B)$; and if $S \setminus A \neq \emptyset$, then $S \setminus A' \neq \emptyset$.
	This will complete the proof contradicting the respective maximality of $(A,B)$.

	Suppose that \ref{item:component-intersects-S} fails, that is, that
	there is a connected component $C$ of $G - V(A)$ with $ S \cap V(C) = \emptyset$.
	Let $A' := G[V(A) \cup V(C)]$ and $B'$ be obtained from $B-V(C)$ by removing all edges of $A'$.
	It follows that $(A',B')$ is a separation of $G'$ of the same order as $(A,B)$ and $(A,B) \leq (A',B')$.
	In particular, $(A',B')$ extends $(A,B)$ and $(A',B') \neq (A,B)$.
	Moreover, note that if $S \setminus A \neq \emptyset$, then $S \setminus A' \neq \emptyset$ as $S \cap V(C) = \emptyset$.
	It remains to show that $(A',B')$ is $(w,S)$-good.
	Since $(A,B)$ is $(w,S)$-good, we may fix a path decomposition $\mathcal{W}$ of $(A, S \cap V(A))$ of width at most $w-1$ whose last bag contains $V(A) \cap V(B)$ as a subset.
	Note that $\mathcal{W}$ is also a path decomposition of $(A',S \cap V(A'))$.
	This is because $S \cap V(A') = S \cap V(A)$ and all neighbours of $V(C)$ in $A'$ are contained in $V(A) \cap V(B)$, which is contained in the last bag of $\mathcal{W}$.

	For \ref{item:vertex-in-separator-sees-B}, observe that if a vertex $v \in V(A) \cap V(B)$ is not adjacent to any vertex in $B - V(A)$, then, $(A',B') := (A \cup \{vv' \colon vv' \in E(B)\},B - v)$ is a $(w,S)$-good separation extending $(A,B)$, satisfying $(A',B') \neq (A,B)$; and if $S \setminus A \neq \emptyset$, then $S \setminus A' \neq \emptyset$.
	This completes the proof.
\end{proof}

As mentioned, we need several statements from~\cite{quickly-excluding-apex-forest}: \Cref{lem:smaller_good_separation,lem:finding-sep,lem:pwgs-to-lpw} below.
\Cref{lem:smaller_good_separation} is almost the same as \cite[Lemma~12]{quickly-excluding-apex-forest} with a difference in the definition of $(w,S)$-good, although the proof remains the same.
\Cref{lem:finding-sep} is a slight adjustment of \cite[Lemma~14]{quickly-excluding-apex-forest}, and it can be proved along the same lines.
\Cref{lem:pwgs-to-lpw} follows from the proof of \cite[Lemma~15]{quickly-excluding-apex-forest}.


\begin{lem}[\cite{quickly-excluding-apex-forest}, Lemma 12]
	Let $G$ be a graph, let $w$ be a positive integer, let $S\subseteq V(G)$, and let $(A', B')$ and $(P, Q)$ be two separations of $G$ with $(P, Q)\leq (A', B')$.  If $(A', B')$ is $(w, S)$-good and there are $|V(P)\cap V(Q)|$ vertex-disjoint $V(P)$--$V(B')$ paths in $G$, then $(P, Q)$ is $(w, S)$-good.
	\label{lem:smaller_good_separation}
\end{lem}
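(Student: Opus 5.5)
We prove \cref{lem:smaller_good_separation} by modifying a good path decomposition of $(A',S\cap V(A'))$ directly. Let $Z:=V(P)\cap V(Q)$ and $Z':=V(A')\cap V(B')$, and set $k:=|Z|$. Fix a path decomposition $\mathcal W=(W_1,\dots,W_m)$ of $(A',S\cap V(A'))$, with underlying induced subgraph $J'$ of $A'$, of width at most $w-1$ and with $Z'\subseteq W_m$. Fix $k$ vertex-disjoint $V(P)$--$V(B')$ paths $R_z$, one starting at each $z\in Z$. Since $(P,Q)\le(A',B')$, every $z\in Z$ lies in $V(A')$. Each $R_z$ starts in $Z$, ends in $V(B')$, and has all internal vertices outside $V(P)\cup V(B')$. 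Hence its internal vertices lie in $V(A')\setminus(V(P)\cup V(B'))\subseteq V(Q)$, and its last vertex lies in $Z'$.

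The first step is to replace each vertex of $V(A')\setminus V(P)$ by a vertex of $Z$, Diestel-style. Define a map $\phi$ from $V(R_z)\cap V(A')$ to $z$ for each $z\in Z$, so that $\phi$ is the identity on $V(P)$. Then define new bags
\[W'_t:=(W_t\cap V(P))\cup\{z\in Z\colon V(R_z)\cap W_t\neq\emptyset\}.\]
Since the paths are disjoint, $|W'_t|\le|W_t|$, so the width stays at most $w-1$. Each $z\in Z$ lies in $R_z$, whose vertices are all in $V(A')$, and $R_z$ is connected in $A'$. Moreover the last vertex of $R_z$ is in $Z'\subseteq W_m$. Together these should ensure that the set of indices $t$ with $z\in W'_t$ is an interval, and that $Z\subseteq W'_m$, which is the required last-bag condition.

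The second step is to check that $(W'_1,\dots,W'_m)$ is a path decomposition of $(P,S\cap V(P))$ with underlying subgraph $J:=P[(V(J')\cap V(P))\cup Z]$. This requires three things.
\begin{enumerate}[nosep,nolistsep,label=(\arabic*)]
\item $S\cap V(P)\subseteq V(J)$. This holds because $S\cap V(P)\subseteq S\cap V(A')\subseteq V(J')$.
\item Every edge of $J$ is covered by some bag. Edges of $P$ with both ends in $V(J')$ are covered by the bag of $\mathcal W$ that covers them. An edge $zy$ with $z\in Z\setminus V(J')$ is the delicate case.
\item For each component $C$ of $P-V(J)$, some bag contains $N_P(V(C))$. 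Such a $C$ lies inside a component $C'$ of $A'-V(J')$, unless it meets $J'$ only through $Z$. Its neighbourhood $N_P(V(C))$ lies in $(N_{A'}(V(C'))\cap V(P))\cup Z$, and the bag $W_t$ containing $N_{A'}(V(C'))$ maps to a bag $W'_t$ containing the relevant vertices.
\end{enumerate}

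The main obstacle is handling vertices of the paths $R_z$, and of $Z$ itself, that are not in $J'$: they may lie in components of $A'-V(J')$ rather than in bags. I will first try to enlarge $J'$ harmlessly so that every $R_z\cap A'$ lies in $J'$. A component $C'$ of $A'-V(J')$ meeting $R_z$ has its whole neighbourhood in a single bag. So I can instead contract the parts of $R_z$ passing through $C'$, map them to that bag's image, and interpolate the interval for $z$ across the gap. Alternatively, I will argue that any maximal subpath of $R_z$ inside $C'$ is bounded by vertices of $N(C')$ within the same bag, so connectivity of the index set for $z$ is preserved. Once this is done, the bound $|W'_t|\le|W_t|$ together with the last-bag property gives that $(P,Q)$ is $(w,S)$-good.
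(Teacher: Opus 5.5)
Your construction, $J:=P[(V(J')\cap V(P))\cup Z]$ with $W'_t:=(W_t\cap V(P))\cup\{z\in Z\colon V(R_z)\cap W_t\neq\emptyset\}$, is the right one. It is the standard Diestel--Seymour projection that the paper inherits from the cited Lemma~12. Your width bound $|W'_t|\le|W_t|$ and your last-bag argument are fine, and the order condition $|Z|\le w$, which you do not state, follows from $Z\subseteq W'_m$. Your ``alternatively'' argument for the interval property is also correct: consecutive vertices of $R_z$ in $V(J')$ are either adjacent in $J'$ or both lie in $N_{A'}(V(C'))$ for some component $C'$ of $A'-V(J')$.

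The problem is that you stop at the one step where the focused setting differs from the classical one. You leave the edge $zy$ with $z\in Z\setminus V(J')$ as ``the delicate case''. Item (3) does not explain why vertices of $Z$ in $N_P(V(C))$ end up in $W'_t$. Your first proposed remedy, enlarging $J'$ to contain every $R_z$, is not harmless, because the added vertices must be placed in bags and the width can grow.

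The missing observation is as follows. Take $z\in Z\setminus V(J')$, let $C'$ be the component of $A'-V(J')$ containing $z$, and let $W_s$ be a bag containing $N_{A'}(V(C'))$.
\begin{itemize}
\item Every edge of $R_z$ lies in $A'$, since each has an endpoint outside $V(B')$. The last vertex of $R_z$ is in $Z'\subseteq W_m\subseteq V(J')$. So the first vertex of $R_z$ in $V(J')$ belongs to $N_{A'}(V(C'))\subseteq W_s$, and hence $z\in W'_s$.
\item Any neighbour $y$ of $z$ in $J$ either lies in $V(J')$, in which case $y\in N_{A'}(V(C'))\cap V(P)\subseteq W'_s$, or lies in $Z\cap V(C')$, in which case $y\in W'_s$ by the same exit argument. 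This settles edge coverage.
\item For (3), a component $C$ of $P-V(J)$ is disjoint from $V(J')$, so your ``unless'' case never occurs. Hence $C$ lies in some component $C'$ of $A'-V(J')$. Each $y\in N_P(V(C))$ lies either in $N_{A'}(V(C'))\cap V(P)$ or in $Z\cap V(C')$, so again $N_P(V(C))\subseteq W'_s$.
\end{itemize}
With these additions the proof is complete.
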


\begin{lem}[\cite{quickly-excluding-apex-forest}, Lemma 14, adapted] \label{lem:finding-sep}
	Let $w$ be a positive integer, let $G$ be a connected graph, let $v\in V(G)$ and let $S \subseteq V(G)$ such that $\pw(G,S) \geq  w$.
	If $F$ is a forest on at most $w$ vertices, and $y$ is a vertex of $F$ then there is a separation $(A,B)$ of $G$ such that
	\begin{enumerate}[nosep,nolistsep,label=\rm(\roman*),ref={Requirement (\roman*)}]
		\item $|V(A) \cap V(B)| \leq |V(F)|$, \label{item:find_wTS_i}
		\item there is a $(V(A) \cap V(B))$-rooted model of $F$ in $A$ such that $v$ is in the branch set of $y$ , and \label{item:find_wTS_ii}
		\item $(A,B)$ is maximal $(w,S)$-good. \label{item:find_wTS_iii}
	\end{enumerate}
\end{lem}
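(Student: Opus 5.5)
We argue by induction on $|V(F)|$, following the proof of \cite[Lemma~14]{quickly-excluding-apex-forest} and Diestel's argument~\cite{Diestel1995}.
Throughout, we use that $\pw(G,S)\ge w$ means no $(w,S)$-good separation $(A,B)$ can have $B$ "trivial": if $S\subseteq V(A)$, then a decomposition of $(A,S\cap V(A))$ whose last bag contains $V(A)\cap V(B)$ is already a path decomposition of $(G,S)$ of width at most $w-1$. Hence every $(w,S)$-good separation satisfies $S\setminus V(A)\neq\emptyset$.

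\textbf{Base case and removal of a leaf.} If $F$ is the one-vertex forest on $y$, take the separation $(A_0,B_0):=(G[\{v\}],G-\emptyset)$, that is, $A_0=\{v\}$ and $B_0=G$. It has order $1\le w$ and is $(w,S)$-good via the single bag $\{v\}$. Extend it to a maximal $(w,S)$-good separation $(A,B)$ extending $(A_0,B_0)$, which exists by finiteness. Its order is at most $1$, and $v\in V(A)$. By \cref{obs:finding_sep}, using that $S\setminus V(A)\neq\emptyset$, the separator is nonempty (since $G$ is connected), so it is a single vertex $z$. A path inside $A$ from $v$ to $z$ is then the required branch set; it exists because $A$ is connected, which we can ensure by the choice of extension. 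Note that $v$ itself may become the separator vertex.

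For the inductive step, choose a leaf or isolated vertex $x\neq y$ of $F$ when possible; otherwise $F=\{y\}$ or $F$ is a single edge, and we re-root. Let $F':=F-x$. Induction gives a maximal $(w,S)$-good $(A',B')$ with order at most $|V(F)|-1$ and a $(V(A')\cap V(B'))$-rooted model of $F'$ in $A'$ with $v$ in the branch set of $y$.

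\textbf{Adding the vertex $x$.} Let $p$ be the neighbour of $x$ in $F$, if it exists, and let $z_p$ be the separator vertex in $B_p$. By \cref{obs:finding_sep}\ref{item:path-z-to-S} there is a path from $z_p$ into $B'-V(A')$. We seek a separation $(A'',B'')\ge(A',B')$ of order at most $|V(F)|$ that contains an extra branch set for $x$, touching $B_p$ and hitting the new separator.

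First, add one neighbour $q$ of $z_p$ in $B'\setminus V(A')$ to the separator. Then take a minimum-order separation $(P,Q)$ with $(A'+z_pq)\le(P,Q)\le(G,\text{tail})$, choosing it furthest possible. If its order exceeds $w$, or if $|V(F)|\le w$ fails, we contradict the bound, using $|V(F)|\le w$. Among separations of minimum order between $V(A')\cup\{q\}$ and $S\setminus V(A')$, Menger's theorem gives disjoint paths from the old separator plus $q$. We extend each branch set along its path, and $x$ gets the branch set that is the path from $q$.

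Each such $(P,Q)$ is $(w,S)$-good by \cref{lem:smaller_good_separation} applied with the larger separation. For that larger separation we obtain goodness by appending one bag (the old separator together with $q$, of size at most $w$) to the decomposition of $(A',S\cap V(A'))$. Finally, we extend $(P,Q)$ to a maximal $(w,S)$-good separation. Maximal extensions keep the order at most that of $(P,Q)$. The model is preserved by routing through Menger paths again, which is where \cref{lem:smaller_good_separation} is applied in reverse.

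\textbf{Main obstacle.} The delicate point is the bookkeeping of order versus width: the new bag has size $|V(F)|\le w$, so the width stays at most $w-1$ exactly. I expect verifying this, together with keeping every branch set hitting the final separator after taking maximal extensions, to be the main difficulty. There the disjoint-paths argument must be repeated carefully.
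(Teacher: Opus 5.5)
\textbf{There is a genuine gap in the inductive step, exactly at the point you defer as the ``main obstacle''.} You route Menger paths from $(V(A')\cap V(B'))\cup\{q\}$ to $S\setminus V(A')$ and assert that there are $|V(A')\cap V(B')|+1$ of them. Nothing forces this, and it is false in general.

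Take $w=4$ and $S=V(G)$, where $G$ consists of a $K_4$ on $\{v,z_1,z_2,z_3\}$, a $K_5$ on $\{q_1,q_2,q_3,r_1,r_2\}$, and all edges $z_iq_j$. Then:
\begin{itemize}
\item $\pw(G,S)=\pw(G)\ge 4$.
\item With $A'=G[\{v,z_1,z_2,z_3\}]$ and $B'$ the rest, one checks that $(A',B')$ is a maximal $(4,S)$-good separation of order $3$.
\item It carries a $\{z_1,z_2,z_3\}$-rooted model of the $3$-vertex path with $v$ in the branch set of $y$. So it is a legitimate output of your induction hypothesis when $F$ is the $4$-vertex path.
\end{itemize}
Yet $\{q_1,q_2,q_3\}$ separates $V(A')\cup\{q\}$ from $S\setminus V(A')$, so there are at most $3<4$ disjoint paths.

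The corresponding separation is not $(4,S)$-good either. Putting $q_1,q_2,q_3$ in a common bag creates a $K_6$ on $\{z_1,z_2,z_3,q_1,q_2,q_3\}$, so some bag has size $6$. It also admits no good extension, since such an extension would extend $(A',B')$.

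Relatedly, your use of \cref{lem:smaller_good_separation} runs in the wrong direction. That lemma transfers goodness downwards, from a good separation lying above $(P,Q)$ to $(P,Q)$. The separation you certify by appending a bag, namely $A'$ plus $q$, lies below $(P,Q)$. Most importantly, the maximality of $(A',B')$ is never actually used in the inductive step, and it is precisely the reason enough disjoint paths exist.

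\textbf{The fix: aim the paths at the separator of a good separation, not at $S$.} This is the mechanism written out at the end of the proof of \cref{lem:finding_F+}.
\begin{enumerate}
\item Set $A:=G[V(A')\cup\{q\}]$ and let $B$ be $B'$ minus the edges of $A$. If $x$ is isolated in $F$, take $q$ to be a neighbour of any separator vertex.
\item Appending the bag $(V(A')\cap V(B'))\cup\{q\}$, of size at most $|V(F)|\le w$, shows that $(A,B)$ is $(w,S)$-good.
\item Let $(A'',B'')$ be a maximal $(w,S)$-good separation extending $(A,B)$. Its order is at most $|V(F)|$.
\item Apply \cref{thm:menger_separation} to $(A,B)\le(A'',B'')$. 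This gives a separation $(P,Q)$ between them and $|V(P)\cap V(Q)|$ disjoint $V(A)$--$V(B'')$ paths.
\item Now \cref{lem:smaller_good_separation}, applied with the good separation $(A'',B'')$ above $(P,Q)$, shows that $(P,Q)$ is $(w,S)$-good.
\item If $|V(P)\cap V(Q)|\le |V(A')\cap V(B')|$, then $(P,Q)$ extends $(A',B')$. It differs from $(A',B')$ because $q\in V(P)\setminus V(A')$. This contradicts the maximality of $(A',B')$.
\item Hence there are $|V(A)\cap V(B)|$ disjoint paths from $V(A)\cap V(B)$ to $V(A'')\cap V(B'')$, and they lie inside $A''$.
\item Append these paths to the branch sets, using the new branch set $\{q\}$ for $x$ (adjacent to the branch set of $p$ via $z_pq$). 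This gives the required $(V(A'')\cap V(B''))$-rooted model of $F$.
\end{enumerate}

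\textbf{Minor points.} Your base case is fine, though connectivity of $A$ there is automatic rather than a matter of choice: a component of $A$ avoiding the unique separator vertex would be all of $G$. The ``re-rooting'' is unnecessary, since a forest with at least two vertices always has a vertex of degree at most $1$ other than $y$.
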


\begin{lem}[\cite{quickly-excluding-apex-forest}, Lemma~15, adapted] \label{lem:pwgs-to-lpw}
	Let $w$ be a positive integer, let $G$ be a connected graph, and let $u\in V(G)$.
	If for every $U \subseteq V(G)$ such that $G[U]$ is connected with $S := N_G(U)$, we have $\pw(G-U,S) < w$, then there exists a path decomposition $\mathcal W$ of $V(G)$ and a layering $(L_i \colon i \in \mathbb N)$ of $G$, such that $L_0=\{u\}$, and the width of $(\mathcal W, \mathcal L)$ is at most $w$.  In particular, $\lpw(G) \leq  w$.
\end{lem}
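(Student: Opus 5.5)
The plan is to follow the proof of \cite[Lemma~15]{quickly-excluding-apex-forest}. We take the BFS layering from $u$: $L_i$ is the set of vertices at distance exactly $i$ from $u$. This gives $L_0=\{u\}$, and every edge joins two vertices in the same or consecutive layers. For each $i\ge 1$, let $U_i:=L_0\cup\dots\cup L_{i-1}$. Then $G[U_i]$ is connected, being a BFS ball, and $N_G(U_i)=L_i$. By hypothesis, $\pw(G-U_i,L_i)<w$. So there is an induced subgraph $J_i$ of $G-U_i$ with $L_i\subseteq V(J_i)$ and a path decomposition $\mathcal W_i$ of $J_i$ whose bags have at most $w$ vertices. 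Moreover, every component of $G-U_i-V(J_i)$ has its neighbourhood inside a single bag of $\mathcal W_i$.

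We then build $\mathcal W$ recursively, working from the root outward. More generally, we decompose any component $C$ of $G-X$, where $X$ is the set of vertices already processed. Let $U'$ be the component of $G-V(C)$ containing $u$; it is connected, and $N_G(U')\subseteq V(C)$. The hypothesis gives a focused decomposition $(J,\mathcal W_J)$ of $(G-U',N_G(U'))$, restricted to $C$, with all bags of size at most $w$. We place $\mathcal W_J$ in the sequence. For each component $C'$ of $C-V(J)$, its neighbourhood $N(C')$ lies in some bag $W_k$ of $\mathcal W_J$. We recursively decompose $C'$ and splice the resulting sequence right after $W_k$, adding $N(C')$ to each spliced bag. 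This keeps each vertex's bag-indices an interval, because $N(C')\subseteq W_k$ and the splice is contiguous. It also covers every edge between $C'$ and $N(C')$. Induction on $|V(C)|$ shows the result is a path decomposition of $C$; at the top level $C=G-u$, and we add $u$ to all bags.

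The width is controlled only layer by layer: we need $|W\cap L_i|\le w$ for every bag $W$ and layer $i$. A bag of the final decomposition is a bag of the current $\mathcal W_J$, of size at most $w$, plus the accumulated attachment sets $N(C')$ from the enclosing recursion levels. The main obstacle is showing that these pieces do not overlap in a single layer. Concretely, we want that whenever a bag at a deeper recursion level meets $L_i$, the inherited sets $N(C')$ from outer levels contribute no vertices of $L_i$ beyond what the inner $\mathcal W_J$ already accounts for.

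My first attempt is to organise the recursion so that recursing into a component $C'$ always moves the focus set $N_G(U')$ to a strictly later layer than every vertex of $N(C')$. The intended mechanism is that $U'$ absorbs everything processed so far, so the new $N_G(U')$ lies in $V(C')$. If this monotonicity holds, the inherited part of a bag is confined to earlier layers, and in each layer the count is bounded by a single bag of some $\mathcal W_J$, hence by $w$. If strict monotonicity fails, because $N(C')$ and $C'$ share a layer, I would instead refine the decomposition at that step. The idea is to split $J$ by layers and use the BFS property that edges jump at most one layer, charging each layer to only one focused decomposition. This is exactly the point where the original proof loses the factor $2$, and the accounting there must be redone for bags of size at most $w$.
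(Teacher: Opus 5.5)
\textbf{There is a genuine gap.} Your splicing construction does give a path decomposition of $G$. However, the whole content of the lemma is the bound $|W\cap L_i|\le w$, and you leave it open.

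The mechanism you propose for that bound fails for the BFS layering. The hypothesis only guarantees that $J$ contains the focus set. Its other vertices, which may be needed to keep the bags small, can lie in arbitrary BFS layers. So $N(C')\subseteq W_k$ can contain vertices of some layer $L_j$. Suppose an edge inside $L_j$ joins $x\in N(C')\cap U'$ to $y\in V(C')$. Then $y\in N_G(U')\subseteq V(J')$, so some bag $W'$ of the decomposition of $C'$ meets $L_j$ as well. The spliced bag $W'\cup N(C')$ then charges $L_j$ to two focused decompositions. Nothing prevents each of them from contributing $w$ vertices, and deeper splicing accumulates more.

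Hence neither ``the focus set moves strictly later than $N(C')$'' nor ``the inherited part is confined to earlier layers'' holds, and ``refining by layers'' is not an argument. Also, the factor $2$ in the earlier bound comes from the weaker focused-pathwidth bound (width $2w-2$ in the old definition of $(w,S)$-good), not from this combination step.

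\textbf{The missing idea.} The lemma asks only for \emph{some} layering with $L_0=\{u\}$, and the right one is given by recursion depth, not BFS distance. Call the components of $G-u$ the pieces of depth $1$. If $C$ is a piece of depth $t$, put $V(J_C)$ into $L_t$; the components of $C-V(J_C)$ are then pieces of depth $t+1$. Each spliced bag is $\{u\}$ together with a subset of at most one focused bag per depth. So $|W\cap L_t|\le w$ is immediate.

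What must then be checked is that this is a layering: every edge leaving a piece $C'$ should join $J_{C'}$ to the parent $J_C$. This holds if you apply the hypothesis with $U=V(G)\setminus V(C')$. Then the focus set is every vertex of $C'$ with a neighbour outside $C'$, which also gives $N_G(C'')\subseteq V(J_{C'})$ for the pieces $C''$ one level down. This choice of $U$ requires $G-V(C')$ to be connected.

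Your remark that $U'$ ``absorbs everything processed so far'' is exactly this invariant, and it is not automatic: a vertex of $J_C$ may reach $u$ only through $C'$. You can enforce it by choosing $J_C$ with $|V(J_C)|$ minimum. Let $Q$ be the union of the components of $G-V(C')$ avoiding $u$. Then:
\begin{itemize}
\item $Q$ lies inside $C$ and misses the focus set of $C$;
\item all neighbours of $Q$ are in $C'$;
\item $Q$ meets $V(J_C)$ whenever it is non-empty.
\end{itemize}
Deleting $Q$ from $J_C$ merges $Q$ into the component containing $C'$. That component has neighbourhood $N_G(C')\setminus Q$, which still lies in one bag. This contradicts minimality, so $Q$ is empty and the invariant holds.
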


We will also use the following variant of Menger's theorem.
\begin{thm}[Menger]
	Let $G$ be a graph, and let $(A, B)$ and $(A_1, B_1)$ be separations of $G$.  If $(A, B)\leq (A_1, B_1)$, then there exists a separation $(P, Q)$ such that $(A, B)\leq (P, Q)\leq (A_1, B_1)$ and there are $|V(P)\cap V(Q)|$ pairwise vertex-disjoint $A-B_1$ paths in $G$.
	\label{thm:menger_separation}
\end{thm}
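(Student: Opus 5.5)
The plan is to derive this from the classical vertex version of Menger's theorem, applied inside the ``middle part'' of $G$ lying between the two separations. Let $S_0:=V(A)\cap V(B)$ and $T_0:=V(A_1)\cap V(B_1)$, and let $H:=G[V(B)\cap V(A_1)]$. Since $A\subseteq A_1$ and $B\supseteq B_1$, both $S_0$ and $T_0$ lie in $V(H)$: for instance, $S_0\subseteq V(A)\subseteq V(A_1)$.

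First I will apply Menger's theorem in $H$ to the sets $S_0$ and $T_0$. This gives an integer $k$, a family of $k$ pairwise vertex-disjoint $S_0$--$T_0$ paths in $H$, and a set $Z\subseteq V(H)$ with $|Z|=k$ that meets every $S_0$--$T_0$ path in $H$. Each such path is also a $V(A)$--$V(B_1)$ path in $G$. Its interior lies in $V(B)\cap V(A_1)$, so if an interior vertex were in $V(A)$ it would lie in $S_0$, and if it were in $V(B_1)$ it would lie in $T_0$; both are excluded by the definition of an $S_0$--$T_0$ path.

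Next I will build the separation. Let $U$ be the set of vertices of $H-Z$ that are reachable in $H-Z$ from $S_0\setminus Z$. Note that $U\cap T_0=\emptyset$, because $Z$ separates $S_0$ from $T_0$ in $H$. Define $P:=A\cup H[U\cup Z]$, and let $Q$ be the subgraph with vertex set $V(G)\setminus (U\cup (V(A)\setminus V(B)))$ together with all edges of $G$ not in $P$. Then $V(P)\cap V(Q)=Z$, and I expect this to be a routine verification.

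The main step is checking that $(P,Q)$ is a separation, i.e.\ that no edge of $G$ joins $U$ to a vertex outside $U\cup Z\cup V(A)$. Take $u\in U$ and a neighbour $y$ of $u$. There are three cases.
\begin{itemize}
\item If $y\in V(H)$, then $y\in U\cup Z$ by the definition of $U$ as a reachability class in $H-Z$.
\item If $y\in V(A)\setminus V(B)$, then $u\in V(A)$, because $(A,B)$ is a separation. Hence $u\in S_0$, and the edge $uy$ belongs to $A\subseteq P$.
\item If $y\in V(B_1)\setminus V(A_1)$, then $u\in V(B_1)$ because $(A_1,B_1)$ is a separation. Hence $u\in T_0$, contradicting $U\cap T_0=\emptyset$.
\end{itemize}

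Finally I will check the inclusions $(A,B)\le(P,Q)\le(A_1,B_1)$. We have $A\subseteq P$ by construction, and $P\subseteq A_1$ because $V(H)\subseteq V(A_1)$. For $Q$: every vertex outside $V(A)\setminus V(B)$ lies in $V(B)$, and every vertex outside $U\cup Z$ either lies in $V(B_1)\setminus V(A_1)$, or lies in $V(A)$, or lies in $V(H)$ and is separated from $S_0$. From this I will conclude $B_1\subseteq Q\subseteq B$, with edges handled in the same way. The order of $(P,Q)$ is $|Z|=k$, which equals the number of disjoint paths found above, as required.
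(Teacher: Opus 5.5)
\textbf{There is a genuine gap, in the edge set of $P$.} The paper gives no proof of this statement; it just quotes it as a form of Menger's theorem. Your reduction to the classical vertex version inside $H=G[V(B)\cap V(A_1)]$ is the natural route. Much of your verification is right: the Menger paths are $V(A)$--$V(B_1)$ paths, $V(P)\cap V(Q)=Z$, your three-case analysis correctly shows every neighbour of $U$ lies in $U\cup Z\cup V(A)$, and $(A,B)\le(P,Q)$ holds.

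Separations here are pairs of not necessarily induced subgraphs. So an edge of $G$ with both ends in $T_0=V(A_1)\cap V(B_1)$ may belong to $B_1$ and not to $A_1$. Your $P=A\cup H[U\cup Z]$ contains \emph{every} edge of $G$ with both ends in $Z$, and nothing stops $Z$ from containing two such vertices of $T_0$. Then $P\not\subseteq A_1$, and since $Q$ only receives the edges outside $P$, also $B_1\not\subseteq Q$. Your justification ``$P\subseteq A_1$ because $V(H)\subseteq V(A_1)$'' only controls vertices, and ``edges handled in the same way'' does not go through.

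Concretely, let $G$ be a single edge $ab$, let $A=A_1$ be the edgeless graph on $\{a,b\}$, and let $B=B_1=G$. Then $S_0=T_0=\{a,b\}$ and the only $S_0$--$T_0$ paths are the two one-vertex paths. So $Z=\{a,b\}$ is forced and $U=\emptyset$. Your construction outputs $P=G$ and $Q$ edgeless, which is not $\le(A_1,B_1)$.

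\textbf{The repair is local.} Keep $V(P)=V(A)\cup U\cup Z$ and $V(Q)=V(G)\setminus(U\cup(V(A)\setminus V(B)))$, but set $E(P):=E(A)\cup\{e\in E(G)\colon e \text{ has an end in } U\}$ and $E(Q):=E(G)\setminus E(P)$.
\begin{itemize}
\item Your case analysis shows that $P$ is a subgraph.
\item An edge not in $E(P)$ is not in $A$, hence lies in $B$. So it avoids $V(A)\setminus V(B)$, and it also avoids $U$. Hence $Q$ is a subgraph, and $(P,Q)$ is a separation with $V(P)\cap V(Q)=Z$.
\item Since $U\subseteq V(A_1)$ and $U\cap T_0=\emptyset$, no vertex of $U$ lies in $V(B_1)$. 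So every edge incident with $U$ lies in $A_1$, which gives $E(P)\subseteq E(A_1)$.
\item Consequently every edge of $B_1$ lies in $Q$, and $(P,Q)\le(A_1,B_1)$ follows. The rest of your argument is unchanged.
\end{itemize}

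A smaller point: when you check that the Menger paths are $V(A)$--$V(B_1)$ paths, use that the whole path, not only its interior, lies in $V(H)$. That is what controls the endpoints as well.
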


We are ready to state and prove the main technical lemma.

\begin{lem}
	Let $w$ be a positive integer, let $G$ be a connected graph, let $v\in V(G)$ and let $S\subseteq V(G)$ such that $\pw(G, S)\geq w$.  Let $F$ be a forest on $w+1$ vertices, let $x$ be a vertex of degree at most $1$ of $F$.
	If $x$ is of degree $1$ in $F$, then let $y$ be its neighbour in $F$ and if $x$ is of degree $0$ in $F$, then let $y$ be any vertex of $F$ distinct from $x$.
	Then, either there is a separation $(A_1, B_1)$ of $G$ such that
	\begin{enumerate}[nosep,nolistsep,label=\rm(1.\arabic*),ref={Requirement (1.\arabic*)}]
		\item there is a $(V(A_1)\cap V(B_1))$-rooted model of $F$ in $A_1$, \label{item:1.1}
		\item $V(A_1)\ne V(G)$, and every connected component of $G-V(A_1)$ intersects $S$,\label{item:1.2}
		\item every vertex of $V(A_1)\cap V(B_1)$ has a neighbour in $V(B_1)\setminus V(A_1)$; \label{item:1.3}
	\end{enumerate}
	or there is a separation $(A_2, B_2)$ of $G$ such that
	\begin{enumerate}[nosep,nolistsep,label=\rm(2.\arabic*),ref={Requirement (2.\arabic*)}]
		\item there is a $(V(A_2)\cap V(B_2))$-rooted model of $F-x$ in $A_2$ such that $v$ is in the branch set of~$y$,\label{item:2.1}
		\item there exists a vertex $s\in S\setminus V(A_2)$ such that for every $z\in V(A_2)\cap V(B_2)$, there exists a path between $s$ and $z$ all of whose internal vertices are in $V(B_2)\setminus V(A_2)$. \label{item:2.2}
	\end{enumerate}
	\label{lem:finding_F+}
\end{lem}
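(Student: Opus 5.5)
We begin by applying \cref{lem:finding-sep} to $G$, $v$, $S$, the forest $F-x$ (which has exactly $w$ vertices) and the vertex $y$ of $F-x$. This gives a separation $(A,B)$ of $G$ with three properties: $|V(A)\cap V(B)|\le w$; there is a $(V(A)\cap V(B))$-rooted model of $F-x$ in $A$ with $v$ in the branch set of $y$; and $(A,B)$ is maximal $(w,S)$-good.

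Since $\pw(G,S)\ge w$, we have $S\setminus V(A)\neq\emptyset$. Otherwise the good path decomposition of $(A,S\cap V(A))$ would yield a path decomposition of $(G,S)$ of width at most $w-1$: the components of $G-V(A)$ only see $V(A)\cap V(B)$, which lies in the last bag. By \cref{obs:finding_sep}, every component of $G-V(A)$ meets $S$. Also, every $z\in V(A)\cap V(B)$ has a path to $S\setminus V(A)$ with all internal vertices in $V(B)\setminus V(A)$.

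\textbf{Case 1: some component $C$ of $G-V(A)$ has all of $V(A)\cap V(B)$ in $N_G(V(C))$.} Pick any $s\in S\cap V(C)$. Connecting each $z$ to $s$ inside $C$ gives \ref{item:2.2} with $(A_2,B_2):=(A,B)$, and \ref{item:2.1} holds by construction. This is the easy outcome.

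\textbf{Case 2: no such component exists.} We aim for outcome~1. The plan is to enlarge $(A,B)$ to a separation $(A_1,B_1)$ whose separator has $w+1$ vertices and is the root set of a model of $F$.

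1. Pick a component $C$ of $G-V(A)$ that contains a vertex of $S$ and is adjacent to the branch set $B_y$.
2. Grow a connected set into $C$ from a neighbour of $B_y$ towards a new vertex, and use it as the branch set of $x$. If $x$ has degree $0$, an arbitrary vertex of $C$ suffices.
3. Let $A_1$ be $A$ together with this new branch set, and let the new separator be $(V(A)\cap V(B))$ plus the tip vertex, so the model becomes rooted. The order becomes at most $w+1$.
4. For \ref{item:1.2}, we also absorb into $A_1$ all components of $G-V(A_1)$ that avoid $S$. For \ref{item:1.3}, we delete from the separator every vertex without a neighbour outside.

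\textbf{The main obstacle} is step~4 combined with maximality. Removing a separator vertex destroys rootedness of the corresponding branch set unless we can re-root it. If the branch set gets trapped without reaching the boundary, then we should obtain a $(w,S)$-good separation extending $(A,B)$, contradicting maximality.

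To handle this, we would take the extension $(A_1,B_1)$ maximally as above. We then apply \cref{thm:menger_separation} between $(A,B)$ and $(A_1,B_1)$ to get an intermediate $(P,Q)$ with $|V(P)\cap V(Q)|$ disjoint paths to $B_1$. If $|V(P)\cap V(Q)|\le w$, then \cref{lem:smaller_good_separation} shows that $(P,Q)$ is $(w,S)$-good; a good path decomposition can be obtained by appending a bag. This extends $(A,B)$, so by maximality $(P,Q)=(A,B)$. Hence there are $w+1$ disjoint paths from $V(A)$ to $V(B_1)$. Along these paths we re-root the branch sets of the model of $F$ onto $V(A_1)\cap V(B_1)$, which gives \ref{item:1.1}. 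Finally, $V(A_1)\neq V(G)$ because $S\cap V(C)$ is not fully absorbed.
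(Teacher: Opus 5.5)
Your Case~1 is correct, and it is more general than the paper's first case. If $S\setminus V(A)=\{u'\}$, then $G-V(A)$ has a single component, which sees the whole separator by \cref{obs:finding_sep}. The gaps are in Case~2.

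First, your Menger step starts from the order-$w$ separation $(A,B)$. Every $V(A)$--$V(B_1)$ path begins in $V(A)\cap V(B)$, so there are never more than $w$ disjoint such paths. Hence ``$(P,Q)=(A,B)$, so there are $w+1$ paths'' is a non sequitur: at best it re-roots $F-x$, and the branch set of $x$ is never linked to $V(A_1)\cap V(B_1)$. The paper instead takes as $M_x$ a single neighbour $u'\in V(B)\setminus V(A)$ of a separator vertex of $M_y$. (Growing a larger set, as you propose, puts all of its boundary vertices into the separator, not just a tip.) It sets $A^+:=G[V(A)\cup\{u'\}]$, which is $(w+1,S)$-good by appending the bag $W_m\cup\{u'\}$ to a good decomposition $(W_1,\dots,W_m)$ of $(A,S\cap V(A))$. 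It then applies Menger between $(A^+,B^+)$ and $(A_1,B_1)$. A sandwiched $(P,Q)$ of order at most $w$ then contains $u'\notin V(A)$, so it would be a \emph{strict} extension of $(A,B)$.

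Second, ``maximally as above'' must name the property. The paper takes $(A_1,B_1)$ maximal among $(w+1,S)$-good extensions with $S\setminus V(A_1)\neq\emptyset$. This is what gives $V(A_1)\neq V(G)$: if $\pw(G,S)=w$, then $(G,\emptyset)$ is itself $(w+1,S)$-good. It is also why \cref{obs:finding_sep} is stated for this restricted maximality, yielding (1.2) and (1.3). Your claim that ``$S\cap V(C)$ is not fully absorbed'' is unsupported.

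Third, there is a gap that following the paper does not fix. \cref{lem:smaller_good_separation} keeps its parameter. Applied to the $(w+1,S)$-good $(A_1,B_1)$, it only gives $(P,Q)$ a decomposition of width at most $w$, and appending a bag never lowers the width. The paper's proof makes the same inference, and it can fail in your Case~2. Take $w=2$ and $F$ the path $x\,y\,y'$. Let $G$ be a triangle $z_1ab$ plus the edges $z_1z_2$, $bd$, $z_2g$, plus two disjoint copies of $K_4$: one joined to $d$ by an edge at $k$, the other joined to $g$ by an edge at $m$. Let $S$ consist of $z_1,z_2$ and the eight $K_4$-vertices, and let $v=z_1$. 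Then:
\begin{itemize}
\item $(G[\{z_1,z_2\}],G-z_1z_2)$, with $M_y=\{z_1\}$ and $M_{y'}=\{z_2\}$, is maximal $(2,S)$-good, since every strict extension of order at most $2$ needs a bag of size $3$.
\item No component of $G-\{z_1,z_2\}$ sees both $z_1$ and $z_2$, so you are in Case~2.
\item $\{b,z_2\}$ meets every path from $\{z_1,z_2,u'\}$ (for $u'\in\{a,b\}$) to the $K_4$'s, so at most two disjoint paths leave $V(A^+)$.
\item $G[\{z_1,z_2,a,b,d,g,k,m\}]$ is a maximal $(3,S)$-good extension with $S\setminus V(A_1)\neq\emptyset$. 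Its separator $\{k,m\}$ is too small to carry a rooted model of $F$, so (1.1) fails.
\end{itemize}
The statement still holds in this example, e.g.\ via outcome~2 with $A_2=G[\{z_1,a\}]$, $M_y=\{z_1\}$, $M_{y'}=\{a\}$ and $s=k$. But it does not follow by this route, so Case~2 needs an additional idea.
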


\begin{proof}
	By~\Cref{lem:finding-sep} applied to $w := |V(F)|-1$, $G$, $v$, $S$,  $F - x$, and $y$, we obtain a separation $(A_0, B_0)$ of $G$ satisfying
	\begin{enumerate}[nosep,nolistsep,label=\rm(0.\arabic*),ref={Property (0.\arabic*)}]
		\item $|V(A_0) \cap V(B_0)| \leq |V(F-x)| = w$, \label{item:find_wTS_i-0}
		\item there is a $(V(A) \cap V(B))$-rooted model $(M_z : z \in V(F-x))$ of $F-x$ in $A_0$ such that $v \in V(M_y)$, and \label{item:find_wTS_ii-0}
		\item $(A_0,B_0)$ is maximal $(w,S)$-good. \label{item:find_wTS_iii-0}
	\end{enumerate}
	By~\ref{item:find_wTS_iii-0}, we may fix a path decomposition $(W_1, \ldots , W_m)$  of $(A_0, S\cap V(A_0))$ of width at most $w-1$ such that $V(A_0) \cap V(B_0) \subset W_m$.
	Let $u$ be a vertex of $V(A_0)\cap V(B_0) \cap V(M_y)$.
	By~\Cref{obs:finding_sep}.\ref{item:vertex-in-separator-sees-B}, there exists $u' \in V(B_0) \setminus V(A_0)$ such that $u$ and $u'$ are adjacent in $G$.
	Let $A:=G[V(A_0)\cup \{u'\}]$ and let $B$ be obtained from $B_0$ by removing all edges of $A$.
	In particular, $(A,B)$ is a separation of $G$.
	Letting $M_x := G[\{u'\}]$, by~\ref{item:find_wTS_ii-0}, we obtain that $(M_z : z \in V(F))$ is a $(V(A)\cap V(B))$-rooted model of $F$ in $A$.
	Since $V(A_0)\cap V(B_0)\subseteq W_m$, and the neighbourhood of $u'$ in $A$ is contained in $V(A_0)\cap V(B_0)$, $(W_1, \ldots , W_m, W_m \cup\{u'\})$ is a path decomposition of $(A, S\cap V(A))$.
	Moreover, since $|W_m \cup\{u'\}|=|W_m|+1\leq w+1$,  this path decomposition has width at most $w$.
	We also have $V(A)\cap V(B) = (V(A_0)\cap V(B_0))\cup \{u'\}\subseteq W_m \cup \{u'\}$.  Therefore, $(A, B)$ is $(w+1, S)$-good.

	The path decomposition $(W_1,\dots,W_m)$ witnesses that $\pw(A_0,S \cap V(A_0)) < w$ and by assumptions, $\pw(G,S) \geq w$, hence, $V(A_0) \neq V(G)$.
	Additionally, by~\cref{obs:finding_sep}.\ref{item:component-intersects-S}, every connected component of $G - V(A_0)$ intersects $S$, thus, $S\setminus V(A_0)\ne \emptyset$.
	It follows that either $S\setminus V(A) \ne \emptyset$ or $S\setminus V(A_0)= \{u'\}$.

	First, suppose that $S\setminus V(A_0)= \{u'\}$.
	We claim that \ref{item:2.1} and \ref{item:2.2} hold for $(A_2,B_2) := (A_0,B_0)$.
	Indeed, \ref{item:2.1} follows from \ref{item:find_wTS_ii-0}, and \ref{item:2.2} follows from~\Cref{obs:finding_sep}.\ref{item:path-z-to-S} as $|S\setminus V(A_0)| = 1$.

	It remains to consider the case where $S\setminus V(A)\ne \emptyset$.
	Recall that $(A, B)$ is $(w+1, S)$-good and let $(A_1, B_1)$ be a maximal separation with the property of being $(w+1, S)$-good and satisfying $S\setminus V(A_1) \ne \emptyset$.
	We claim that \ref{item:1.1}, \ref{item:1.2}, and \ref{item:1.3} hold for $(A_1,B_1)$.
	Now, \ref{item:1.2} and \ref{item:1.3} follow from \Cref{obs:finding_sep}.\ref{item:component-intersects-S} and~\ref{item:vertex-in-separator-sees-B}.

	Finally, we argue that $(A_1,B_1)$ satisfies~\ref{item:1.1}, that is, there is a $(V(A_1)\cap V(B_1))$-rooted model of $F$ in $A_1$.
	By \cref{thm:menger_separation}, there exists a separation $(P, Q)$ such that $(A, B)\leq (P, Q)\leq (A_1, B_1)$ and a family $\mathcal L$ of $|V(P)\cap V(Q)|$ vertex-disjoint $V(A)$--$V(B_1)$ paths in $G$.  Observe that each path in $\mathcal L$ is also a $(V(A)\cap V(B))$--$(V(A_1)\cap V(B_1))$ path.
	If $|V(P)\cap V(Q)|\leq w$, then, by~\cref{lem:smaller_good_separation} applied to $(A_1, B_1)$ and $(P, Q)$, the separation $(P, Q)$ is $(w, S)$-good.
	In this case, as $(A_0,B_0) \neq (A,B)$ and $(A_0,B_0) \leq (A,B) \leq (P,Q)$, we obtain a contradiction with  $(A_0, B_0)$ being is maximal $(w, S)$-good.
	Thus, $|V(P)\cap V(Q)|\geq  w+1 = |V(F)|$.
	Using the paths in $\mathcal{L}$, we can extend a $(V(A)\cap V(B))$-rooted model of $F$ in $A$ to a $(V(A_1)\cap V(B_1))$-rooted model of $F$ in $A_1$, which completes the proof.
\end{proof}

\begin{cor}
	Let $G$ be a graph, let $u\in V(G)$, and let $H$ be an apex-forest on at least three vertices.
	If $\pw(G-u, N_G(u))\geq |V(H)|-2$, then, $H$ is a minor of $G$.
	\label{cor:finding_F+}
\end{cor}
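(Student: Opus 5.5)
The plan is to apply \Cref{lem:finding_F+} to $G' := G-u$ and then use $u$ as the apex. Write $H$ as a forest $F$ plus an apex vertex $a$, so that $|V(F)| = |V(H)|-1$. Set $w := |V(H)|-2 = |V(F)|-1$. Since $|V(H)|\ge 3$, we have $w\ge 1$. We may assume $H$ has an apex $a$ adjacent to all of $F$, since enlarging $H$ by adding edges only makes the statement stronger.

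We may also assume $G-u$ is connected. Otherwise, by the definition of focused pathwidth we pass to a component $C$ of $G-u$ with $\pw(C, N_G(u)\cap V(C))\ge w$, and we work in $G[V(C)\cup\{u\}]$. In this component $S:=N_G(u)\cap V(C)$ is nonempty, because $\pw(C,\emptyset)=0<w$.

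For $F$, choose a vertex $x$ of degree at most $1$; a forest with at least two vertices always has one. Let $y$ be as in \Cref{lem:finding_F+}, and let $v$ be an arbitrary vertex of $C$. The lemma then gives one of two outcomes.

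\textbf{Outcome 1.} We get a separation $(A_1,B_1)$ satisfying \ref{item:1.1}--\ref{item:1.3}.
\begin{itemize}
\item By \ref{item:1.2}, every component of $C-V(A_1)$ meets $S$, and by \ref{item:1.3} every separator vertex has a neighbour in $V(B_1)\setminus V(A_1)$.
\item It follows that each $z\in V(A_1)\cap V(B_1)$ is joined to $u$ by a path whose internal vertices lie in $V(C)\setminus V(A_1)$ together with $u$.
\item Take the branch set of $a$ to be $\{u\}$ together with all of $V(C)\setminus V(A_1)$. This set is connected: each component of $C-V(A_1)$ meets $S\subseteq N_G(u)$.
\item This branch set is adjacent to every separator vertex, hence to every branch set of the $(V(A_1)\cap V(B_1))$-rooted model of $F$ inside $A_1$.
\end{itemize}
This yields an $H$-model.

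\textbf{Outcome 2.} We get a separation $(A_2,B_2)$ satisfying \ref{item:2.1} and \ref{item:2.2}. Here we have a rooted model of $F-x$ in $A_2$ with $v$ in the branch set of $y$, and a vertex $s\in S\setminus V(A_2)$ that reaches every separator vertex through $V(B_2)\setminus V(A_2)$.
\begin{itemize}
\item Set the branch set of $x$ to be $\{s\}$.
\item Set the branch set of $a$ to be $\{u\}$ together with the union of the internal vertices of the paths from \ref{item:2.2}, excluding $s$ itself.
\end{itemize}
The delicate point is that $a$ must touch every branch set, including $x$'s, and that $x$ must touch $y$ when $\deg x=1$.
\begin{itemize}
\item Since $s\in N_G(u)$, the branch set $\{s\}$ of $x$ is adjacent to $u$, so $x$ touches $a$.
\item The branch set of $a$ is connected. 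Its path parts either end adjacent to $s$, or are empty when the path is the single edge $sz$. Each nonempty path part ends adjacent to $s$, not to $u$, so connecting it to $u$ needs care.
\end{itemize}
A cleaner option, which I would try first, is to swap roles: put $a$'s branch set equal to $\{u\}$ together with the paths from $s$, with $s$ included, and give $x$ a separate branch set. This fails, however, because $x$ needs its own branch set.

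The alternative is the choice of $v$. I would choose $v\in S$, so that $v$ lies in $y$'s branch set and is adjacent to $u$. Then:
\begin{itemize}
\item Let $a$'s branch set be $\{u\}$, and merge the connecting paths from $s$ into $x$'s branch set, so $B_x = \{s\}\cup$ (internal path vertices). This is connected, lies in $C-V(A_2)$, and is adjacent to every separator vertex, hence to every branch set of $F-x$, in particular to $y$'s.
\item $B_x$ contains $s\in N_G(u)$, so it is adjacent to $u$.
\item Every other branch set must also touch $u$. The model is only $(V(A_2)\cap V(B_2))$-rooted, not $S$-rooted, so this is where the argument breaks.
\end{itemize}
To repair this I would instead contract: let $B_a := \{u\}\cup\{s\}\cup$ (internal path vertices). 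This is connected, since $s\sim u$ and each path reaches $s$, and it is adjacent to all separator vertices and hence to all of $F-x$. Then set $B_x$ to be a vertex of $S$ that is adjacent to $y$'s branch set. Choosing $v\in S$ achieves this: split $v$ off as $B_x=\{v\}$ while keeping the rest of $M_y$ connected and rooted.

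This last step, arranging $x$ as a leaf neighbour of $y$ that is simultaneously adjacent to $u$, is the main obstacle. I expect to handle it by choosing $v\in S$ with a neighbour inside $M_y$, or by applying the lemma with $F$ replaced by $F$ plus a pendant copy, in the spirit of \cite{quickly-excluding-apex-forest}.
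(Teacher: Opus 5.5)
Your Outcome~1 is correct and matches the paper's argument. Outcome~2, however, is left unresolved, and the construction you settle on does not work as stated. You take $B_a=\{u,s\}\cup(\text{internal path vertices})$ and then split $B_x=\{v\}$ off $M_y$. This requires $M_y-v$ to be nonempty, connected and adjacent to $v$. It must also still contain a vertex of $V(A_2)\cap V(B_2)$, so that it touches $B_a$. Nothing in \ref{item:2.1} gives you this: $M_y$ may be just $\{v\}$, or $v$ may be a cut vertex of $M_y$, or its only separator vertex. Your fallback of applying the lemma to $F$ plus a pendant copy is not worked out. If it means enlarging $F$, it would need a larger pathwidth hypothesis in \cref{lem:finding_F+} than you have.

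The missing idea is that $u$ should represent the leaf $x$, not the apex. Choose $v\in S$ when invoking the lemma. For each $q\in V(A_2)\cap V(B_2)$, let $P_q$ be the path from $s$ to $q$ given by \ref{item:2.2}. Set $M_x:=\{u\}$ and $M_a:=\{s\}\cup\bigcup_q\bigl(V(P_q)\setminus\{q\}\bigr)$. The required properties all follow:
\begin{itemize}
\item $M_a$ is connected, since every $P_q$ starts at $s$.
\item $M_a$ lies in $V(C)\setminus V(A_2)$, so it is disjoint from $u$ and from every $M_z$ with $z\in V(F-x)$.
\item $M_a$ is adjacent to each such $M_z$, because $M_z$ contains some separator vertex $q$ and $P_q$ ends there.
\item $M_a$ is adjacent to $M_x$ because $s\in N_G(u)$.
\item $M_x$ is adjacent to $M_y$ because $v\in V(M_y)\cap N_G(u)$.
\end{itemize}
Since $y$ is the only possible neighbour of $x$ in $F$, every edge of $H$ is realised, and $M_y$ never needs to be modified. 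This is the paper's construction. Your third attempt already used exactly these two sets, only with the labels $a$ and $x$ interchanged. The obstruction you hit there (the other branch sets need not touch $u$) disappears once $u$ only has to touch $y$ and the apex.
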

\begin{proof}
	Let $h \in V(H)$ be such that $F := H - h$ is a forest.
	Let $x$ be a vertex of degree at most $1$ in $F$.
	If $x$ is of degree $1$ in $F$, then let $y$ be its neighbour in $F$ and if $x$ is of degree $0$ in $F$, then let $y$ be any vertex of $F$ distinct from $x$.
	Note that $F$ has at least two vertices, hence, $y$ is well-defined.
	Let $C$ be a connected component of $G - u$ such that $\pw(G-u, N_G(u)) = \pw(C, N_G(u) \cap V(C))$.
	Let $S := N_G(u) \cap V(C)$, $v \in S$, and $w := |V(H)| - 2$.
	We apply \cref{lem:finding_F+} to $w$, $C$, $v$, $S$, $F$, $x$, and $y$.
	There are two possible outcomes.

	First, suppose that the outcome of \cref{lem:finding_F+} is a separation $(A_1,B_1)$ of $G$ satisfying \ref{item:1.1}, \ref{item:1.2}, and \ref{item:1.3}.
	Let $(M_z : z \in V(F))$ be a $(V(A_1)\cap V(B_1))$-rooted model of $F$ in $A_1$ whose existence assures \ref{item:1.1}.
	Let $M_h$ be the connected component of $G - V(A_1)$ containing $u$.
	By~\ref{item:1.2}, $C - V(A_1)$ has at least one connected component and each of them intersects $S = N_G(u)$.
	It follows that $V(C - V(A_1)) \subset V(M_h)$.
	Moreover by~\ref{item:1.3}, $M_h$ is adjacent to $M_z$ for every $z \in V(F)$, hence, $(M_z : z \in V(H))$ is a model of $H$ in $G$, as desired.

	Second, suppose that the outcome of \cref{lem:finding_F+} is a separation $(A_2,B_2)$ of $G$ satisfying \ref{item:2.1} and \ref{item:2.2}.
	Let $(M_z : z \in V(F-x))$ be a $(V(A_2)\cap V(B_2))$-rooted model of $F$ in $A_2$ whose existence assures \ref{item:2.1}.
	Let $s \in S \setminus V(A_2)$ be as in \ref{item:2.2}, and, for every $z\in V(A_2)\cap V(B_2)$, let $P_{z}$ be a path from $s$ to $z$, whose internal vertices all belong to $V(B_2)\setminus V(A_2)$.
	We define $M_x := G[\{u\}]$ and $M_h := G[\bigcup_{z\in V(A_2)\cap V(B_2))}V(P_z)]$.  By construction, they are both connected, and are disjoint.
	Since $v \in V(M_y)$ and $v \in S = N_G(u)$, $M_y$ and $M_x$ are adjacent in $G$.
	By construction, $M_h$ is adjacent to $M_z$ for every $z \in V(F)$, hence, $(M_z : z \in V(H))$ is a model of $H$ in $G$, as desired.
\end{proof}

\begin{proof}[Proof of~\Cref{thm:lpw-stronger}]
	Let $H$ be an apex-forest with at least three vertices and let $G$ be an $H$-minor-free graph.
	Without loss of generality, we may assume that $G$ is connected.
	Let $w := |V(H)| - 2$.
	We will apply~\Cref{lem:pwgs-to-lpw}.
	To check its assumption, let $U \subseteq V(G)$ be such that $G[U]$ is connected, and let $S := N_G(U)$.
	Let $G'$ be obtained by contracting $U$ in $G$ to a vertex $u$.
	Note that $G'-u$ is the same graph as $G - U$.
	By~\Cref{cor:finding_F+} applied to $G'$, $u$, and $H$ we obtain that either $\pw(G'-u,S) < w$ or $G$ contains $H$ as a minor.
	Since $G'$ is a minor of $G$ and $G$ is $H$-minor-free, the latter is false.
	This indeed verifies the assumption of~\Cref{lem:pwgs-to-lpw}, and the result follows.
\end{proof}

\section{Lower bounds}
\label{sec:lower-bounds}

In this section, we give lower bounds witnessing the tightness of \cref{thm:lpw}, \cref{thm:ltd} and \cref{cor:td-diam}.
All these are witnessed by the same infinite family of graphs.

Recall that a \defin{block} in a graph is a subgraph that is $2$-connected or isomorphic to $K_1$ or $K_2$, and inclusion-wise maximal with this property.

For all integers $t$ and $r$ with $t \geq 2$ and $r \geq 1$, we construct graphs \defin{$G_{t,r}$}.
Let $t$ be an integer with $t \geq 2$.
The construction is recursive on $r$.
For the base case, let $G_{t,1}$ be a complete graph on $t$ vertices.
We say that all the vertices of $G_{t,1}$ are on \defin{level~$1$}. 
Next, having constructed $G_{t,r-1}$, let $G_{t,r}$ be obtained from a copy $J$ of $G_{t,r-1}$ by adding for each vertex $v \in V(J)$, three disjoint copies $J^1_v$, $J^2_v$ and $J^3_v$ of a complete graph on $t-1$ vertices and adding all edges between $v$ and vertices of $J^i_v$ for every $i\in \{1, 2, 3\}$.  We will identify the vertices of $J$ with those of $G_{t, r-1}$, which means that $V(G_{t, r})=V(G_{t, r-1})\cup (\bigcup_{v\in V(J), i\in \{1, 2, 3\}} V(J^i_v))$.
We say that the vertices in $\bigcup_{v \in V(J), i\in \{1, 2, 3\}}V(J^i_v)$ are on \defin{level~$r$}. 
For every $r' \in \{1,\dots,r-1\}$, we say that the vertices on level $r'$ in $G_{t, r-1}$ are also on level $r'$ in $G_{t, r}$.
Note that every block of $G_{t,r}$ is a complete graph on $t$ vertices,  and either is $G_{t,1}$ or is such that all of its vertices except one are on the same level, say $s$, and the other vertex $v$ of the block is on a level lower than $s$.  In the latter case, we set $v$ to be the \defin{root} of this block. 
We additionally assign an arbitrary vertex to be the \defin{root} of $G_{t,1}$.
Therefore, we obtain the notion of the \defin{root of a block} in $G_{t,r}$.

Let $G$ be a graph with an elimination forest $F$.
Let $u \in V(G)$.
When $u$ is a root or a leaf of $F$, let $F' := F-u$.
Otherwise, let $F'$ be obtained from $F-u$ by adding edges between the parent of $u$ in $F$ and all children of $u$ in $F$.
Finally, all roots of $F$ distinct from $u$ are assigned to be roots of $F'$ and if $u$ is a root of $F$, then all children of $u$ in $F$ become roots of $F'$.
Note that $F'$ is an elimination forest of $G - u$.
In particular, when $J$ is an induced subgraph of $G$, we obtain an elimination forest $F'$ of $J$ from $F$ by repeatedly removing vertices of $V(G) \setminus V(J)$ from $G$ and performing the above operation on the elimination forest.
In this case, we will call $F'$, the \defin{elimination forest of $J$ induced by $F$}.

\begin{lem}\label{lem:lb}
	Let $t$ and $r$ integers with $t \geq 2$ and $r \geq 1$.
	\begin{enumerate}[nosep,nolistsep,label=\arabic*.,ref={Property \arabic*}]
		\item Every block of $G_{t,r}$ is a complete graph on $t$ vertices. \label{lem:lb-blocks}
		\item $\radius(G_{t,r}) \leq r$. \label{lem:lb-radius}
		\item $\td(G_{t,r}) \geq 1 + r(t-1)$. \label{lem:lb-td}
	\end{enumerate}
\end{lem}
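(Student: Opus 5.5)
Property 1 and Property 2 go by induction on $r$, directly from the construction; Property 3 is the real content, and I would prove it by induction on $r$ via a stronger, "rooted" statement.

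For Property 1, $G_{t,1}=K_t$ is a single block. Passing from $G_{t,r-1}$ to $G_{t,r}$, each new clique $J^i_v\cup\{v\}$ is a $K_t$ attached to the old graph only at the cut vertex $v$. So the blocks of $G_{t,r}$ are the blocks of $G_{t,r-1}$ together with these new $K_t$'s.

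For Property 2, fix a centre $c$ of $G_{t,r-1}$ with eccentricity at most $r-1$. Every new vertex is adjacent to a vertex of $G_{t,r-1}$, so its distance from $c$ is at most $r$. For the base case, $\radius(K_t)\le 1$ since $t\ge 2$.

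For Property 3 I would prove a stronger claim by induction on $r$: for every elimination forest $F$ of $G_{t,r}$, some root-to-leaf path of $F$ contains at least $1+r(t-1)$ vertices. The base case holds because the closure of $F$ contains $K_t$, so all $t$ vertices of $K_t$ lie on one vertical path.

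For the inductive step, let $F$ be an elimination forest of $G_{t,r}$ and let $F'$ be the elimination forest of $J=G_{t,r-1}$ induced by $F$. The key point is that the ancestor relation in $F'$ is the restriction of the ancestor relation in $F$, because the contraction operation preserves ancestry among the remaining vertices. By induction, $F'$ has a vertical path $P'$ with at least $1+(r-1)(t-1)$ vertices of $J$; let $v$ be its deepest vertex in $F'$. All vertices of $P'$ are then $F$-ancestors of $v$. Now consider the three cliques $J^1_v,J^2_v,J^3_v$. Each $J^i_v\cup\{v\}$ is a clique, so in $F$ it lies on one vertical path, and the vertices of $J^i_v$ are each comparable with $v$.

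I need one $i$ such that all of $J^i_v$ lies below $v$. Then the path from the deepest vertex of $J^i_v$ to the root passes through all of $J^i_v$, then $v$, then the rest of $P'$, giving at least $(t-1)+1+(r-1)(t-1)=1+r(t-1)$ vertices.

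This is the main obstacle, and it is exactly why there are three copies. Suppose some vertex $a\in J^i_v$ is an $F$-ancestor of $v$. I would try to show this happens for at most two values of $i$, or else replace $v$ by a better path.

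The argument I would try first is this. If $a$ is a strict ancestor of $v$, then $a$ lies on the root path of $v$, which also contains $P'$. Since $a\notin V(J)$, it is an extra vertex on that path, so the path through $v$ gains at least one vertex beyond $P'$. The ancestors of $v$ form a chain, and $J^i_v$ must be a contiguous-in-comparability set together with $v$. If two different cliques $J^i_v$ and $J^{i'}_v$ both have vertices above $v$, their non-adjacent vertices are still all comparable, so every vertex of $J^i_v\cup J^{i'}_v$ is comparable to $v$. Counting the vertices of these cliques that sit above $v$ together with the deepest below-part of some clique should give at least $t-1$ extra vertices on a single vertical path.

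If this counting becomes messy, I would strengthen the inductive claim instead, requiring the long path to end at a vertex of $J$ with nothing of $J^i_v$ above it. Failing that, I would choose $v$ to be the deepest $J$-vertex on the actual $F$-path, and argue via pigeonhole over the three copies that at most one copy can be interleaved with $P'$ without already yielding $(t-1)$ extra vertices on the chain.
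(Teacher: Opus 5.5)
\paragraph{Review.} Properties 1 and 2 are correct and follow the paper. So is your setup for Property 3: you pass to the induced forest $F'$ of $J$, note that $F'$-ancestry is $F$-ancestry restricted to $V(J)$, and take the deepest vertex $v$ of a vertical path $P'$ of $F'$ with at least $1+(r-1)(t-1)$ vertices. The gap is the final step of Property 3, which you leave open.

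You say you need some $i$ with all of $J^i_v$ below $v$. This is not guaranteed. An elimination forest may make non-adjacent vertices comparable; for instance, $F$ could be a single path. So $F$ can place vertices of all three of $J^1_v,J^2_v,J^3_v$ above $v$. Your fallbacks (counting over several cliques, strengthening the hypothesis, pigeonhole over the three copies) are only sketched, ending with ``should give''. As written, the bound $\td(G_{t,r})\ge 1+r(t-1)$ is therefore not established.

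The missing observation is that the position of the clique relative to $v$ does not matter, and one copy suffices. Take $J^1_v$. Since $V(J^1_v)\cup\{v\}$ is a clique, it is a chain in $F$; let $d$ be its deepest element. Then $V(J^1_v)\cup\{v\}\subseteq V(P_F(d))$. Because $v$ lies on $P_F(d)$, so do all $F$-ancestors of $v$, in particular all of $V(P')$. As $V(J^1_v)$ is disjoint from $V(J)\supseteq V(P')$, we get $|V(P_F(d))|\ge |V(P')|+(t-1)\ge 1+r(t-1)$.

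Vertices of $J^1_v$ above $v$ are just additional non-$J$ vertices on $P_F(v)$. This is your own remark about the vertex $a$, carried to its conclusion, and it is exactly the paper's argument. The three copies play no role in Property 3; the paper uses them in Lemma~\ref{lem:lb_layer} to build a complete ternary tree $T_{r,3}$ inside a single layer.
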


\begin{proof}
	We have already seen that \ref{lem:lb-blocks} is clear from the construction.

	In order to prove~\ref{lem:lb-radius}, we prove by induction on $r$ that every vertex on level $1$ in $G_{t,r}$ is in distance at most $r$ from every vertex of $G_{t,r}$.
	This clearly implies that $\radius(G_{t,r}) \leq r$.
	For $r=1$ the assertion is clear as $G_{t,1}$ is a complete graph on $t$ vertices.
	Assume that $r > 1$ and let $v$ be a vertex of $G_{t,r}$ on level $1$.
	By induction, $v$ is in distance at most $r-1$ from every vertex of $G_{t,r}$ on levels at most $r-1$.
	Let $u$ be a vertex of $G_{t,r}$ on level $r$.
	Then, by construction, $u$ has a neighbor $u'$ in $G_{r,t}$ on level $r-1$, thus, $u$ is in distance at most $r$ from $v$ in $G_{r,t}$.
	This shows~\ref{lem:lb-radius}.

	We prove \ref{lem:lb-td} by induction on $r$.
	For $r=1$, the assertion is clear as $G_{t,1}$ is a complete graph on $t$ vertices, so $\td(G_{t,1}) = t$.
	Assume that $r > 1$.
	Let $F$ be an elimination forest of $G_{t,r}$ such that the vertex-height of $F$ is equal to $\td(G_{t,r})$.
	Let $J$ and $J^i_v$ for $v\in V(J)$ and $i\in \{1, 2, 3\}$ be as in the construction of $G_{t,r}$.
	In particular, $J$ is a copy of $G_{t,r-1}$ and $J^i_v$ is a copy of a complete graph on $t-1$ vertices for each $v \in V(J)$ and $i\in \{1, 2, 3\}$.
	Let $X := \bigcup_{v \in V(J), i\in \{1, 2, 3\}} V(J^i_v)$.
	It follows that the graph $G_{t,r-1}$ is isomorphic to $G_{t,r} - X$.
	Let $F'$ be the elimination forest of $G_{t,r} - X$ induced by $F$.
	Let $v$ be a leaf of $F'$.
	Since $V(J^1_v) \cup \{v\}$ is a clique in $G_{t,r}$, the vertices $V(J^1_v) \cup \{v\}$ are in one root-to-leaf $R$ path in $F$.
	Also, by definition, every vertex of $P_{F'}(v)$ lie in $R$ in $F$.
	It follows that $|V(R)| \geq |P_{F'}(v)| + |V(J^1_v)| = |P_{F'}(v)| + (t-1)$.
	Taking a leaf $v$ of $F'$ that maximizes $|P_{F'}(v)|$, we obtain that the vertex-height of $F$ is at least the vertex-height of $F'$ plus $t-1$.
	By induction, $\td(G_{t,r-1}) \geq 1 + (r-1)(t-1)$, thus, the vertex-height of $F'$ is at least $1 + (r-1)(t-1)$ and finally the vertex-height of $F$ (equal to $\td(G_{t,r})$) is at least $1 + (r-1)(t-1) + (t-1) = 1 + r(t-1)$, as desired.
	This completes the proof of~\ref{lem:lb-td}.
\end{proof}

\cref{cor:td-diam} states that for every apex-linear forest $H$ with at least two vertices, and for every connected graph $G$, if $G$ is $H$-minor-free, then $\td(G) \leq (|V(H)|-2)\cdot \radius(G)+1$.  The fact that this is tight is a direct corollary of \cref{lem:lb}.
For every integer $t$ with $t\ge 2$, the \defin{$(t+1)$-vertex fan $H_t$} is the graph obtained by adding a dominant vertex to the $t$-vertex path $P_t$.
Note that $H_t$ is an apex-linear forest, and so, also an apex-forest.
Moreover, $H_t$ is $2$-connected.
\begin{cor}
	For all integers $t$ and $r$ integers with $t \geq 2$ and $r \geq 1$, there exists a graph $G$ of radius at most $r$ that is $H_t$-minor-free and such that $\td(G) \geq (|V(H_t)|-2)\cdot \radius(G) + 1$
\end{cor}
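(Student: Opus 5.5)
We take $G := G_{t,r}$ from \cref{lem:lb}. By \ref{lem:lb-radius}, $\radius(G)\le r$. By \ref{lem:lb-td}, $\td(G)\ge 1+r(t-1)$. Since $|V(H_t)|=t+1$, we have $|V(H_t)|-2=t-1$. So once we know $G$ is $H_t$-minor-free, we get
\[\td(G)\ \ge\ 1+r(t-1)\ \ge\ 1+(t-1)\cdot\radius(G) = (|V(H_t)|-2)\cdot \radius(G)+1,\]
where the middle inequality uses $\radius(G)\le r$ and $t-1\ge 1$. So the bound-chasing is immediate, and the only real content is $H_t$-minor-freeness.

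For minor-freeness, the plan is the standard block argument. Suppose $G$ has an $H_t$ minor. Because $H_t$ is $2$-connected, some block of $G$ must contain an $H_t$ minor. To see this, take a model of $H_t$ in $G$ whose total size is minimal. For any cut vertex $c$ of $G$, the branch sets avoiding $c$ cannot lie in two different components of $G-c$: otherwise $H_t$ minus the branch vertex of the branch set containing $c$ (or $H_t$ itself, if no branch set contains $c$) would be disconnected. This contradicts $2$-connectivity of $H_t$ when $|V(H_t)|\ge 3$, which holds since $t\ge2$.

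The cleanest way to make this rigorous is probably to cite the well-known fact that a $2$-connected graph is a minor of a graph $G$ if and only if it is a minor of some block of $G$. If we want a self-contained argument instead, we can induct on the block-cut tree: split $G$ at a cut vertex $c$ into $G_1\cup G_2$ with $V(G_1)\cap V(G_2)=\{c\}$. Given a model in $G$, we restrict every branch set to the side containing "most" of the model. Branch sets crossing $c$ contain $c$, so at most one branch set crosses. Intersecting that one with the chosen side keeps it connected and keeps all needed adjacencies, again by $2$-connectivity.

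By \ref{lem:lb-blocks}, every block of $G$ is $K_t$. But $H_t$ has $t+1$ vertices, and a minor of $K_t$ has at most $t$ vertices, so no block contains $H_t$ as a minor. Hence $G$ is $H_t$-minor-free, completing the proof. The main (mild) obstacle is stating the block-reduction lemma carefully; everything else follows directly from \cref{lem:lb}.
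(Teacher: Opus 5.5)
Your proposal is correct and follows the paper's proof: take $G=G_{t,r}$, use the three properties of the lemma (every block is a copy of $K_t$, radius at most $r$, treedepth at least $1+r(t-1)$), and deduce $H_t$-minor-freeness from the $2$-connectivity of $H_t$ together with $|V(H_t)|=t+1>t$. The paper simply asserts the block-reduction fact that you sketch, and your sketch is sound: the branch sets avoiding a cut vertex all lie in one component, and the at most one branch set through the cut vertex can be trimmed to that side.
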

\begin{proof}
	Take $G=G_{t,r}$.
	By~\ref{lem:lb-blocks} in~\Cref{lem:lb}, as $H_t$ is $2$-connected, $G$ is $H_t$-minor-free.
	By~\ref{lem:lb-radius}, the radius of $G$ is at most $r$ and by ~\ref{lem:lb-td}, we have $\td(G) \geq (t-1)r + 1 \geq (|V(H_t)|-2)\cdot \radius(G) + 1$.
\end{proof}

For all positive integers $r$ and $d$, the complete $d$-ary tree of radius $h-1$ will be denoted by \defin{$T_{h,d}$}.
Tightness of~\cref{thm:lpw,thm:ltd} is witnessed by $G_{t,t'}$ where $t'$ depends on $t$.
The next lemma abstracts a property that we need to show tightness of both theorems.

\begin{lem}
	Let $t$ and $r$ be integers with $t \geq 2$ and $r \geq 1$.
	Let $\mathcal L$ be a layering of $G_{t, r}$ satisfying
	\begin{enumerate}[nosep,nolistsep,label=\rm(\alph*),ref={Assumption (\alph*)}]
		\item for every block $B$ of $G_{t,r}$, we have $|V(B) \cap L_B|\geq 2$, where $L_B$ is the layer of $\mathcal{L}$ containing the root of $B$. \label{item:blocks-layering}
	\end{enumerate}
	Then, there is a set $X_r\subset V(G_{t, r})$ such that
	\begin{enumerate}[nosep,nolistsep,label=\rm(\roman*),ref={Requirement (\alph*)}]
		\item $X_r$ is contained in a single layer of $\mathcal{L}$, \label{item:single-layer}
		\item $G_{t,r}[X_r]$ is isomorphic to $T_{r,3}$, and \label{item:Tr3}
		\item in this isomorphism the leaves of $T_{r,3}$ correspond to vertices of $G_{t,r}$ on level~$r$. \label{item:leaves}
	\end{enumerate}
	\label{lem:lb_layer}
\end{lem}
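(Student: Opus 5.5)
We argue by induction on $r$, building $X_r$ as a complete ternary tree whose root lies on level $1$ and whose depth-$k$ vertices lie on level $k+1$. Here $T_{r,3}$ has radius $r-1$, hence $r$ levels of vertices. We use the fact that $G_{t,r-1}$ is exactly the subgraph of $G_{t,r}$ induced by vertices on levels at most $r-1$. Moreover, every block of $G_{t,r-1}$ is a block of $G_{t,r}$ with the same root, so the restriction of $\mathcal{L}$ to $V(G_{t,r-1})$ is a layering of $G_{t,r-1}$ still satisfying \ref{item:blocks-layering}. (A layering restricted to an induced subgraph remains a layering, with possibly empty layers.)

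\emph{Base case $r=1$.} $T_{1,3}$ is a single vertex. Take $X_1$ to be any vertex of $G_{t,1}$; all vertices of $G_{t,1}$ are on level $1$, so the three requirements hold trivially.

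\emph{Inductive step.} Let $r\ge 2$ and apply induction to $G_{t,r-1}$ with the restricted layering. This gives $X_{r-1}$ inside a single layer $L$, inducing $T_{r-1,3}$, with its leaves on level $r-1$. Let $v$ be such a leaf. In $G_{t,r}$, $v$ is the root of the three blocks $B^i_v := G_{t,r}[V(J^i_v)\cup\{v\}]$ for $i\in\{1,2,3\}$, and $L_{B^i_v}=L$. By \ref{item:blocks-layering}, each $B^i_v$ has a vertex $w^i_v\neq v$ in $L$, and $w^i_v$ lies on level $r$. Define
\[
X_r := X_{r-1}\cup\{w^i_v : v \text{ a leaf of } X_{r-1},\ i\in\{1,2,3\}\}.
\]
Then $X_r\subseteq L$, and all the new vertices are on level $r$ and are leaves of the new tree.

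\emph{Induced subgraph check.} This is the only point needing care; it is routine bookkeeping rather than a real obstacle. We must verify that $G_{t,r}[X_r]$ is isomorphic to $T_{r,3}$:
\begin{itemize}
\item A vertex $w^i_v$ is adjacent only to $v$ and to the other vertices of $J^i_v$.
\item For a fixed $v$, the three chosen vertices come from distinct copies $J^1_v,J^2_v,J^3_v$, so they are pairwise nonadjacent.
\item For distinct leaves $v\neq v'$, the sets $J^i_v$ and $J^{i'}_{v'}$ are disjoint and nonadjacent.
\item No $w^i_v$ is adjacent to any vertex of $X_{r-1}$ other than $v$, since all its neighbours outside $J^i_v$ reduce to $v$ alone.
\end{itemize}
Hence each leaf of $T_{r-1,3}$ acquires exactly three new pendant children, and no other edges are added, giving $T_{r,3}$. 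Under this isomorphism the leaves of $T_{r,3}$ are exactly the vertices $w^i_v$, which lie on level $r$. This completes the induction.
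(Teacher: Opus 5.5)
Your proof is correct and follows essentially the same route as the paper. Both argue by induction on $r$, restrict the layering to $G_{t,r-1}$, and for each leaf $v$ of the current tree use assumption (a) to pick, from each of the three pendant blocks rooted at $v$, a second vertex lying in the layer of $v$. Your explicit adjacency bookkeeping for the induced-subgraph claim just spells out the step the paper treats as clear.
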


\begin{proof}
	The proof is by induction on $r$.  For the base case of $r=1$, we choose $X_1$ to be the one-vertex set containing the root of the unique block of $G_{t, 1}$. It is straightforward to check that $X_1$ verifies the assertion of the lemma.

	For the inductive step, let $r$ be a positive integer.
	Let $\mathcal L=\{L_i \ | \ i \in \mathbb N\}$ be a layering of $G_{t, r+1}$ satisfying \ref{item:blocks-layering} and let $\mathcal L':= \{L_i\cap V(G_{t, r}) \ | \ i\in \mathbb N\}$.
	Thus, $\mathcal L'$ is a layering of $G_{t, r}$ satisfying \ref{item:blocks-layering}.
	By induction, there exists a set $X_r$ respecting the assertion of the lemma for $r$ and $\mathcal{L}'$.
	Since $X_r$ is contained in a single layer of $\mathcal{L}'$, it is contained in a single layer of $\mathcal{L}$, say $L \in \mathcal{L}$.
	Let $X'_r$ be the subset of $X_r$ corresponding to the leaves of $T_{r, 3}$ (as in \ref{item:leaves}).

	Let $v \in X_r'$, $i \in \{1,2,3\}$, and let $J^i_v$ be as in the construction of $G_{t, r+1}$.
	The subgraph $B := G[\{v\} \cup V(J^i_v)]$ is a block of $G$ with the root $v$ and all other vertices on level $r+1$.
	By~\ref{item:blocks-layering}, $|V(B) \cap L| \geq 2$, hence, let $w_{i,v}$ be a vertex in this intersection distinct from $v$.

	Note that for all $v,v' \in X_r'$ and $i,i' \in \{1,2,3\}$, if $(v,i) \neq (v',i')$, then $w_{i,v} \neq w_{i',v'}$.
	We define $Y_r := \{w_{i,v} :  v\in X'_r, \ i\in \{1, 2, 3\} \}$, and $X_{r+1}:= X_r\cup Y_r$.
	It suffices to check that $X_{r+1}$ satisfies the requirements.

	Since $X_r \subset L$ and $Y_r \subset L$, we have $X_{r+1} \subset L$, and~\ref{item:single-layer} holds.
	We extend the isomorphism between $G_{t,r}[X_r]$ and $T_{r,3}$ to an isomorphism between $G_{t, r+1}[X_{r+1}]$ and $T_{r+1,3}$ as follows.
	Let $v \in X_r$ correspond to a leaf $x$ of $T_{r,3}$.
	Then, we assign the three leaves adjacent to $x$ in $T_{r+1,3}$ with the vertices $w_{1,v}, w_{2,v}$ and $w_{3,v}$.
	It clearly follows that in this isomorphism the leaves of $T_{r+1,3}$ correspond to vertices of $Y_r$, that is, the vertices $G_{t,r+1}$ on level~$r+1$.
	We obtained \ref{item:Tr3} and~\ref{item:leaves}, thus, the proof is complete.
\end{proof}

The following corollary shows that \cref{thm:lpw} is tight, and also, since $\lpw(G)\leq \ltd(G)$ for every graph $G$, that \cref{thm:ltd} is tight as well.
\begin{cor}
	Let $t$ be an integer with $t \geq 4$.
	There exists an $H_{t-1}$-minor-free $G$ such that $\lpw(G)\geq t - 2$.
\end{cor}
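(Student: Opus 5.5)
We take $G := G_{t-1,r}$ for a suitable $r$ (we will use $r := t-2$) and show that $G$ is $H_{t-1}$-minor-free while every pair $(\mathcal W,\mathcal L)$ has width at least $t-2$. Since $t\ge 4$, we have $t-1\ge 3$, so $G_{t-1,r}$ is defined.

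\emph{Minor-freeness.} By \cref{lem:lb}, every block of $G$ is a complete graph on $t-1$ vertices. The fan $H_{t-1}$ has $t$ vertices and is $2$-connected. Any model of a $2$-connected graph in $G$ must have all its branch sets meeting a single block. So $G$ is $H_{t-1}$-minor-free, exactly as in the previous corollary.

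\emph{Reduction to \cref{lem:lb_layer}.} Suppose, for a contradiction, that $(\mathcal W,\mathcal L)$ has width at most $t-3$. We first check Assumption (a) of \cref{lem:lb_layer}. Let $B$ be a block, so $B$ is a clique on $t-1$ vertices. Its vertices lie in at most two consecutive layers, and some bag $W\in\mathcal W$ contains all of $V(B)$. Let $L_B$ be the layer containing the root of $B$. If $|V(B)\cap L_B|=1$, then the other $t-2$ vertices of $B$ all lie in one adjacent layer $L'$. This gives $|W\cap L'|\ge t-2$, a contradiction. Hence $|V(B)\cap L_B|\ge 2$, and \cref{lem:lb_layer} yields a set $X_r$ contained in one layer $L$ with $G[X_r]\cong T_{r,3}$. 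Restricting each bag to $X_r$ gives a path decomposition of the induced subgraph $G[X_r]$ whose bags have size at most $|W\cap L|\le t-3$. Therefore $\pw(T_{r,3})\le t-4$.

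\emph{Pathwidth of the ternary tree (the main step).} It remains to show $\pw(T_{r,3})\ge r-1$; taking $r=t-2$ then gives the contradiction $t-3\le t-4$. We use the classical lemma: if a tree has a vertex $c$ such that $T-c$ has three components each of pathwidth at least $k$, then the tree has pathwidth at least $k+1$. To prove it, take an optimal path decomposition. For each of the three components, look at the interval of bags in which that component itself needs width $k$ (say a bag of size $k+1$ inside that component). Some component's witnessing bag lies between those of the other two. The union of the other two components together with $c$ is connected, so every bag between them contains a vertex of this connected set outside the middle component. That bag therefore has size at least $k+2$. In $T_{r,3}$ the root has three subtrees isomorphic to $T_{r-1,3}$. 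Induction, starting from $\pw(T_{1,3})=0$, then gives $\pw(T_{r,3})\ge r-1$.

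This pathwidth lower bound is the only nonroutine ingredient; one could also just cite it as standard. Finally, since $\lpw(G)\le\ltd(G)$, the same graph witnesses tightness of \cref{thm:ltd}.
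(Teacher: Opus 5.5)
Your proof is correct and follows the paper's argument: the same $H_{t-1}$-minor-freeness via blocks, the same dichotomy in which either a clique on $t-2$ vertices sits in one layer or Assumption (a) of \cref{lem:lb_layer} holds, and then a pathwidth lower bound for complete ternary trees. The only differences are minor. You take $G_{t-1,t-2}$ instead of the paper's $G_{t-1,t-1}$, which suffices because $\pw(T_{r,3})\ge r-1$ (the paper's stated value $\pw(T_{t-1,3})=t-3$ is off by one, the true value being $t-2$, though its conclusion is unaffected). You also prove the ternary-tree bound rather than citing it.
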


\begin{proof}
	Let $G := G_{t-1, t-1}$.
	By~\ref{lem:lb-blocks} in~\Cref{lem:lb}, as $H_{t-1}$ is $2$-connected, $G$ is $H_{t-1}$-minor-free.
	It remains to show that $\lpw(G)\geq t-2$.

	Let $\mathcal L$ be a layering of $G$.  First, assume that for every block $B$ of $G$, there is another vertex of $V(B)$ that is in the same layer as the root of $B$.
	Then, by \cref{lem:lb_layer}, there exists a layer $L\in \mathcal L$ and a subset of vertices $X\subset L$ such that $G[X]$ contains $T_{t-1, 3}$  as a subgraph.
	Since $\pw(T_{t-1, 3})= t-3$, we have $\lpw(G)\geq t-2$, as desired.
	Thus, we can assume that there exists a block $B$ of $G$ such that all vertices of $B$ except its root are in the same layer of $\mathcal L$.
	However, since every block of $G$ is a complete graph on $t-1$ vertices, this means that there exists a clique $K$ of size at least $t-2$ in $G$ whose vertices are all in the same layer of $\mathcal L$.
	It follows that in any path decomposition of $G$ there is a bag containing all vertices of $K$, hence, again $\lpw(G)\geq t -2$, as desired.
\end{proof}

\bibliographystyle{abbrvnat}
\bibliography{bibliography}
\end{document}